\documentclass[12pt]{amsart}
\usepackage{amsfonts,latexsym}
\usepackage{amsmath,amsthm,amssymb,amsfonts,amscd,color}
\usepackage{tikz}
\usepackage{pictexwd,dcpic}

\usepackage{amsmath,tikz-cd}
\usepackage{url}
\usepackage{mathtools}
\usepackage{hyperref}
\usepackage{breqn}
\usepackage{geometry}
\geometry{a4paper,left=1.3cm,right=1.3cm,top=2.5cm,bottom=2.5cm}

\theoremstyle{plain} \numberwithin{equation}{section}
\newtheorem{theo}{Theorem}[section]
\newtheorem{corollary}[theo]{Corollary}
\newtheorem{proposition}[theo]{Proposition}
\newtheorem{lemma}[theo]{Lemma}

\theoremstyle{definition}
\newtheorem{definition}{Definition}
\newtheorem*{example}{Example}
\newtheorem*{rema}{Remark}

\def\Z{\mathbb Z}
\def\CC{\mathbb C}
\def\RR{\mathbb R}

\def\N{\mathbb N}
\def\mcF{\mathcal F}
\def\mcrF{\mathring{\mathcal F}}
\def\mrF{\mathring {F}}

\newcommand{\red}{}


\begin{document}

\title{Equivariant cohomology ring of open torus manifolds with locally standard actions}
\author{Yueshan Xiong}
\address{School of Mathematics and Statistics, Huazhong University of Science and Technology, Wuhan, 430074, P.R. China}
\email{yueshan\_xiong@yahoo.com}

\author{Haozhi Zeng}
\address{School of Mathematics and Statistics, Huazhong University of Science and Technology, Wuhan, 430074, P.R. China}
\email{zenghaozhi@icloud.com}

\begin{abstract}
The notation of torus manifolds were introduced by A. Hattori and M. Masuda. Toric manifolds, quasitoric manifolds, topological toric manifolds, toric origami manifolds \red{and $b$-symplectic toric manifolds} are typical examples of torus manifolds with locally standard action. Recently, L. Yu introduced a nice notion topological face ring $\mathbf{k}[Q]$, a generalization of Stanley-Reisener ring, for a nice manifold with corners $Q$. L. Yu applied polyhedral product technique developed by A. Bahri, M. Bendersky, F. Cohon and S. Gilter to show that the equivariant cohomology ring $H^*_T(M)$ of an open torus manifold $M$ with locally standard action is isomorphic to the topological face ring of $M/T$
under the assumption that the free part of the action is a trivial torus bundle. In this paper we show that Yu's formula holds for any open torus manifolds with locally standard action by a different appoach. In addition using our method we give an explicit formula for equivariant Stiefel-Whitney classes and Pontrjagin classes of open torus manifolds with locally standard action. \end{abstract}
\maketitle

\section{Introduction}
The notion of torus manifolds were introduced by A. Hattori and M. Masuda in \cite{ha-ma-03}. A torus manifold is a smooth connected closed manifold $M$ of dimension $2n$ with an effective smooth action of an $n$-dimensional torus group $T$. We say the $T$-action on $M$ is locally standard if this action locally looks like the standard $T^n$-action on $\CC^n.$ Here the standard $T^n$-action on $\CC^n$ means 
\[(t_1,\dots,t_n)\cdot(z_1,\dots,z_n)=(t_1z_1,\dots, t_nz_n),\]
where $t_i, z_i\in\CC$ and $|t_i|=1$ for $1\le i\le n.$ There are plentiful examples of
torus manifolds with locally standard action   in toric topology such as toric manifolds \cite{co-li-sc-11}, \cite{fu-93}, quasitoric manifolds introduced by M. W. Davis and T. Januszkiewicz  \cite{da-ja-91}, co\"orientable toric origami manifolds introduced by A. Cannas da Silva, V. Guillemin and A. R. Pires \cite{ca-gu-pi-11}, \cite{ho-pi-13},  topological toric manifolds introduced by H. Ishida, Y. Fukukawa and M. Masuda \cite{is-fu-ma-13}, quasitoric manifolds with holes introduced by M. Poddar and S. Sarkar \cite{po-sa-15} and $b$-symplectic toric manifolds introduced by V. Guillemin, E. Miranda, A. R. Pires and G. Scott \cite{gu-mi-pi-sc-15} and so on. In addition, M. Masuda applied torus manifold generalized the classic Pick's formula  \cite{ma-99} and his work pointed out that it is promising to applied torus manifold to study combinatorics.

If $M$ is torus manifold with locally standard action, then its orbit space $Q=M/T$ is a nice manifold with corners. Roughly speaking, a nice manifold with corners locally looks like an open subset of the positive cone $\RR_{\ge 0}^n.$ By T. Yoshida's work we know that we can reconstruct $M$ from $Q$ and extra data up to equivariant homeomorphism. Here the extra data mean an element  $c\in H^2(Q;\mathbb{Z}^n)$, which determine a principal $T^n$-bundle $E$ over $Q$, and a map $\lambda: \{F_1,\dots, F_m\}\to \text{Hom}(S^1,T)$, where $\{F_1,\dots, F_m\}$ is the set of facets of $Q$. More detail can be found in \cite{yo-11}. Hence a natural question is how to read the algebraic topological information from the triple $(Q,c,\lambda)$. The answer is well-known for toric manifolds (see \cite{co-li-sc-11} or \cite{fu-93}), quasitoric manifolds \cite{da-ja-91} and more general case that each face of $Q$ is acyclic \cite{ma-pa-06}, but for general torus manifolds with locally standard action the answer is unknown.

Recently, L. Yu introduced a nice notion topological face ring $\mathbf{k}[Q]$, which is a generalization of Stanley-Reisener ring,  for a nice manifold with corners $Q$ \cite{yu-21}.
Yu showed that if $M$ is an open torus manifold with locally standard action under the assumption that the free part of the action on $M$ is a trivial torus bundle, then $H_T^*(M)\cong\mathbf{k}[M/T]$. Yu's proof based on and generalized polyhedral product technique, developed by  A. Bahri, M. Bendersky, F. Cohon and S. Gilter's in \cite{ba-be-co-gi-10}, so the assumption is necessary. However the following example shows that there are many torus manifolds with locally standard action whose free part of the torus action is not a trivial torus bundle. \red{For example}, the connected sum of the toric manifold $\mathbb{CP}^2$ with a principal $T^2$ bundle $P$ over $T^2$ along a principal orbit is a torus manifold $M$ with locally standard action but the free part of the $T^2$ action is not a non-trivial $T^2$ principal bundle. More precisely, 
\[M = \mathbb{CP}^2 \#_{T^2} P=\Big(\mathbb{CP}^2\setminus{(T^2(p) \times \mathring{\mathbb{D}}^2)}\Big)\bigcup_{T^2\times S^1} \Big(P\setminus({T_1^2 \times \mathring{\mathbb{D}}^2})\Big),\]
where $T^2(p)\cong T^2$ is the orbit space of the point $p \in \mathbb{CP}^2$ with trivial isotropy group, $T_1^2\cong T^2$ is a fiber of the principal $T^2$ bundle $P$ over $T^2$ and \red{$\mathring{\mathbb{D}}^2\cong\{(x,y)\in\mathbb{R}^2: x^2+y^2<1\}$}. The orbit space $M/T^2$ is a nice manifold with corners (see figure \ref{fig:connected sum}).
 
\begin{figure}[h]
	\begin{center}
		\begin{tikzpicture}[scale=0.6]
			\pgfsetfillopacity{1}
			\filldraw [fill=yellow!50, draw=blue,line width=0.3mm] (-5.5,2.25) -- (-4,-2) -- (-7,-0.2) -- (-5.5,2.25);
			
			\draw (3.5,0) .. controls (3.5,2) and (1.5,2.5) .. (0,2.5);
			\draw[rotate=180] (-3.5,0) .. controls (-3.5,2) and (-1.5,2.5) .. (0,2.5);
			\draw[draw=white,double=black] (0,2.5) .. controls (-1.5,2.5) and (-2.8,0.6) .. (-5.5,0.6);
			\draw [draw=white,double=black,yscale=-1] (0,2.5) .. controls (-1.5,2.5) and (-2.8,0.6) .. (-5.5,0.6);
			
			\draw (-2,.2) .. controls (-1.5,-0.3) and (-1,-0.5) .. (0,-.5) .. controls (1,-0.5) and (1.5,-0.3) .. (2,0.2);
			
			\draw (-1.75,0) .. controls (-1.5,0.3) and (-1,0.5) .. (0,.5) .. controls (1,0.5) and (1.5,0.3) .. (1.75,0);	
			
			\draw(0,.5) arc (-90:90:-0.3 and 1);
			\draw[dashed] (0,.5) arc (-90:90:0.3 and 1);
			
			\draw(0,-2.5) arc (-90:90:-0.3 and 1);
			\draw[dashed] (0,-2.5) arc (-90:90:0.3 and 1);
			
			\filldraw[fill=white!50, draw = yellow, dashed] (-5.5,-0.6) arc (-90:90:-0.3 and 0.6) (-5.5,-0.6) arc (-90:90:0.3 and 0.6);
			
			\draw[yellow] (-5.5,-0.6) arc (-90:90:-0.3 and 0.6);
			
			\node [circle, fill=red, inner sep = 0pt, minimum size=2mm] at (-5.5,2.25) {};
			\node [circle, fill=red, inner sep = 0pt, minimum size=2mm] at (-4,-2) {};
			\node [circle, fill=red, inner sep = 0pt, minimum size=2mm] at (-7,-0.2) {};
		\end{tikzpicture}
	\end{center}
	\caption{}
	\label{fig:connected sum}
\end{figure}

Inspired by Yu's result and Macpherson's description for toric varieties \cite{co-li-sc-11}, in this paper we show that Yu's result $H_T^*(M)\cong\mathbf{k}[M/T]$ holds for any open torus manifold with locally standard action by a different method. Our main result based on the following observations. 

\vspace{3mm}
{\bf{Observation 1:}} Let $q: M\to M/T$ be the quotient map and $M_\mrF:=q^{-1}(\mrF)$, where $\mrF$ is an open face (see Definition \ref{definition of open faces and faces for nice manifold with corners}) of $Q=M/T$. Then 
\[H^*_T(M_\mrF)\cong H^*(F)\otimes H^*(BT_F),\]
where $T_F$ is the isotropy subgroup of $M_\mrF.$ In subsection \ref{a homotopy equivalent between ET times T M mrF to}, we give the details.

\vspace{3mm}
{\bf{Observation 2:}} Let $F$ be a codimension $k$ face of $Q$, then $F$ is a connected component of $F_{i_1}\cap F_{i_2}\cap\cdots\cap F_{i_k}$ where $F_{i_j}$'s are facets of $Q$. Then for $M_\mrF$, there is a tubular neighbourhood $X_{\mrF}$ of $M_\mrF$ in $M$ such that the restriction map
\[H^*_T(X_{\mrF})\to H^*_T(X_{\mrF}\setminus M_\mrF)\]
is surjective and the kernel is the ideal of $H^*_T(X_{\mrF})$ generated by $\tau_F|_{X_{\mrF}}$. Here 
$\tau_F$ is the image of equivariant Thom class through the restriction map $H^*_T(X_{\mrF}\setminus M_\mrF)\to H^*_T(X_\mrF)$ and \red{we also call $\tau_F$ equivariant Thom class for simplifying the statements.}
In Lemma \ref{surjection of kappa F} and Lemma \ref{kernel of kappa F generated by Thom class} we give the details.

Observation 1 and Observation 2 may be well-known in toric topology but they are from our motivation that express Yu's topological face ring in terms of equivariant Thom class $\tau_F$. Although in Section \ref{topological face ring} we give an equivalent definition of Yu's topological face ring in terms of faces of $Q$ and polynomials to emphasize the combinatorial aspects of topological faces ring, we show it can also be expressed in terms of equivariant Thom class at Proposition \ref{HF[x] to HTmrF}. In particular, topological face ring $\mathbf{k}[M/T]$ is isomorphic to a subring of $\bigoplus\limits_{F\in\mcF}H_T^*(M_{\mrF})$, where $\mcF$ is the set of faces of $Q$.

\vspace{3mm}
{\bf{Observation 3:}} 
The restriction map
\[
H^*_T(M)\to\bigoplus\limits_{F\in\mcF}H_T^*(M_{\mrF})
\]
is injective. This observation is inspired by Yu's result and Macpherson's description for toric varieties. We note that each open torus manifold with locally standard action can be obtained from the following process.

Step 0: $M_0:=M_{\mathring{Q}}$, where $\mathring{Q}$ is the relative interior parts of $Q$. The isotropy subgroup for each point on $M_0$ is the identity element of $T$.

Step 1:  $M_1:=M_{0}\sqcup(M_{\mrF_1}\sqcup\cdots \sqcup M_{\mrF_m})$ where $\mrF_i$'s are all codimension one open faces of $Q$. The isotropy subgroup for each point on $M_{\mathring{F}_i}$ is the isomorphic to $S^1$. In this step we consider the case that $Q$ has codimension one faces.

Step 2: $M_2:=M_{1}\sqcup(\sqcup _{ij}M_{\mrF_{ij}})$ where $\mrF_{ij}$'s are all codimension 2 open faces of $Q$. The isotropy subgroup for each point on $M_{\mathring{F}_i}$ is the isomorphic to $S^1\times S^1$. In this step we consider the case that $Q$ has codimension two faces.

$\cdots$

After finite steps we obtain $M$. Hence we can prove Observation 3 by induction on the number of faces of $Q$, Mayer-Vietoris sequence and Observation 2. Details can be found in the proof of Proposition \ref{injective observation}.

\vspace{3mm}
{\bf{Observation 4:}} The image of the restriction map \[H^*_T(M)\to\bigoplus\limits_{F\in\mcF}H_T^*(M_{\mrF})
\] is isomorphic to the topological face ring $\mathbf{k}[M/T].$ We prove this by induction on the cardinality of $\mcF$, Mayer-Vietoris sequence and the observation that each faces of $X_{\mrF}/T$ is homotopy equivalent to $\mrF$. 

\vspace{3mm}
The paper is organized as follows. Section \ref{basic definition}, we recall some necessary definitions and properties on open torus manifolds with locally standard action and nice manifolds with corners. In Section \ref{topological face ring} we give an equivalent definition of Yu's topological face ring. In Section \ref{local observation section} we recall the higher dimensional corllar neighborhoods theorem for faces of nice manifolds with corners in toric topology. In Section \ref{An explicit formula for H*T(MmrF)}, we show that topological face ring $\mathbf{k}[M/T]$ is isomorphic to a subring of $\bigoplus\limits_{F\in\mcF}H_T^*(M_{\mrF}).$ In Section \ref{sect: equivariant cohomology of M} is \red{devoted} to show our main theorem that the equivariant cohomology ring of $M$ is isomorphic to the topological face ring $\mathbf{k}[M/T].$ In Section \ref{H(BT) algebra structure} we study the $H^*(BT)$ algebra structure of $H^*_T(M)$. In Section \ref{Sect: On equivariant characteristic classes of M}, we give an explicit formula of equivariant Stiefel-Whitney classes and Pontrjagin classes of $M$.

\bigskip
\noindent
{\bf{Acknowledgments.}}
We are grateful to Mikiya Masuda, Hiraku Abe, Tatsuya Horiguchi and Hideya Kuwata for their fruitful discussion and good comment. We also thank Li Yu for explaining the notion topological face ring to us. The first author is supported in part by NSFC: 11801186 and the second author is supported in part by NSFC: 11901218.

\bigskip

\vspace{5mm}
\section{Torus manifolds with locally standard action}\label{basic definition}\label{basic on torus manifolds with locally standard action}
In this section we recall some necessary definitions and properties on torus manifolds with locally standard action and its orbit space. More details can be found in Chapter 7 of the nice book \cite{bu-pa-15}.

\subsection{Torus manifolds with locally standard action}\label{subsection of torus manifold with locally standard action}
\begin{definition}\label{def of torus manifolds}
A torus manifold is a smooth connected closed manifold of dimension $2n$ with an effective smooth action of an $n$-dimensional torus group $T$.\footnote{Here we do note require $M^T\neq\emptyset,$ where $M^T$ is the set of fixed points of $M$ by $T$-action.}
\end{definition}
\begin{definition}
We call $T^n$-action on $\CC^n$ in the following way a standard action
\[(t_1,\dots, t_n)\cdot(z_1,\dots,z_n)=(t_1z_1,\dots, t_nz_n),\]
where $t_i, z_i\in\CC$ and $|t_i|=1$ for $1\le i\le n.$
\end{definition}
\begin{definition}\label{locally standard action}
A torus manifold $M$ is said to be locally standard if every point in $M$ has an $T$-invariant neighborhood $\mathcal{U}$ weakly equivariant diffeomorphic to an open subset $W\subseteq\CC^n$ invariant under the standard $T$-action on $\CC^n$. Here ``weakly" means that there is an automorphism $\rho :T\to T$ and a diffeomorphism $f:\mathcal{U}\to W$ such that 
\[f(ty)=\rho(t)f(y)\]
for all $t\in T, y\in\mathcal{U}.$
\end{definition}
\begin{example}
Let $S^4$ be the 4-dimensional sphere identified with the following subset of $\mathbb{C}^2\times\RR$:
\[\{(z_1,z_2,h)\in\mathbb{C}^2\times\RR: |z_1|^2|+|z_2|^2+h^2=1\}.\]
We define a $T$-action on $S^4$ as follows
\[(t_1,t_2)\cdot(z_1,z_2,h):=(t_1z_1, t_2z_2,h).\] 
Then this $T$-action on $S^4$ is locally standard. 
\end{example}
\begin{example}
Let $\mathbb{R}P^2$ be the real projective plane. Then $\mathbb{R}P^2\times T^2$ is torus manifold by the following $T^2$-action
\[(t_1,t_2)\cdot\Big([z_0:z_1:z_2],(g_1,g_2)\Big):=\Big([z_0:z_1:z_2],(t_1g_1,t_2g_2)\Big).\]
This $T$-action is locally standard. Hence a torus manifold with locally standard action may be non-orientable. 
\end{example}
\begin{example}
Toric manifolds, topological toric manifolds, quasitoric manifolds, co\"orientable toric origami manifolds, quasitoric manifolds with holes \red{and $b$-symplectic toric manifolds} are typical examples of torus manifolds with locally standard actions.
\end{example}

\begin{definition}
Let $M$ be a torus manifold with locally standard action. We call a $T$-invariant open subset of $M$ an open torus manifold with locally standard action.
\end{definition}
\begin{example}
A complex projective plane $\CC P^2$ with a $T$-action defined by 
\[(t_1,t_2)\cdot[z_0:z_1:z_2]=[z_0:t_1z_1:t_2z_2]\] is a torus manifold with locally standard action and \[\CC P^2\setminus\{[1:0:0],[0:1:0],[0:0:1]\}\] is an open torus manifold with locally standard action.
\end{example}

\vspace{3mm}
\subsection{Nice manifold with corners}
For a $2n$-dimensional torus manifold with locally standard action, the orbit space $M/T$ locally looks like some open subset of $\CC^n/T^n=\RR_{\ge 0}^n,$ so $M/T$ has a face structure from the face structure of the positive cone $\RR_{\ge 0}^n.$
\subsubsection{Face structure on $\RR_{\ge 0}^n$} Set 
$\RR_{\ge 0}^n=\{(x_1,\dots, x_n)\in\RR^n: x_i\ge 0~\text{for}~i=1,\dots, n\}.$
\begin{definition}
The codimension $c(x)$ of $x\in\RR^n_{\ge 0}$ is the number of zero coordinates of $x$.
\end{definition}
\begin{example}
For $n=2$, we have $c\Big((0,0)\Big)=2$, $c\Big((0,2)\Big)=1$ and $c\Big((2,3)\Big)=0.$
\end{example}
\begin{definition}
We call a connected component of $c^{-1}(k)$ an open face, of codimension $k$, of $\RR_{\ge 0}^n$. A closed face (or simply face) of $\RR_{\ge 0}^n$ is the closure of an open face.
\end{definition}
\begin{example}
For $n=2$, $\{(x_1,0):x_1>0\}$ and $\{(0,x_2):x_2>0\}$ are codimension $1$ open faces of  $\RR_{\ge 0}^2$. $\{(x_1,0):x_1\ge 0\}$ is a codimension $1$ face of $\RR_{\ge 0}^2$.
\end{example}

\vspace{5mm}
\subsubsection{Face structure on manifolds with corners}
\begin{definition}\label{manifolds with corners}
A manifold with corners (of dimension $n$) is a topological manifold $Q$ with boundary together ($\partial Q$ can be an empty set) 
with an atlas $(U_i, \varphi_i)$ consisting of homeomorphisms $\varphi_i: U_i\to W_i$ onto open subsets $W_i\subseteq\RR^n_{\ge 0}$
such that $\varphi_i\varphi_j^{-1}:\varphi_j(U_i\cap U_j)\to\varphi_i(U_i\cap U_j)$
 is a diffeomorphism for all $i,j$. (A homeomorphism between open
subsets in $\RR^n_{\ge 0}$ is called a diffeomorphism if it can be obtained by restriction of a
diffeomorphism of open subsets in 
 $\RR^n$.)
\end{definition}
Note that for $q\in Q$, its codimension $c(q)$ is well-defined, which is defined by $c\Big((\varphi_i(q)\Big)$, where $(U_i,\varphi_i)$ is a local chart of $q$.
\begin{definition}\label{definition of open faces and faces for nice manifold with corners}
Let $Q$ be $n$-dimensional manifold with corners. An open face of $Q$ of
codimension $k$ is a connected component of $c^{-1}(k)$. A closed face (or simply face)
of $Q$ is the closure of an open face. A facet is a face of codimension $1$.
\end{definition}

\begin{definition}\label{nice manifold with corners}
A manifold with corner is nice if every codimension $k$ face is contained in exactly $k$ facets.
\end{definition}

\begin{proposition}[\cite{bu-pa-15}, Proposition 7.4.13]
Let $M$ be an open torus manifold with locally standard action, then the orbit space $M/T$ is a nice manifold with corners.
\end{proposition}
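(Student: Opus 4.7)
The plan is to exploit the local standardness of the action to transport the canonical face structure of $\RR_{\ge 0}^n$ to $Q = M/T$. The central observation is that the quotient of $\CC^n$ by the standard $T^n$-action is canonically homeomorphic to $\RR_{\ge 0}^n$ via the map
\[
\mu : \CC^n \to \RR_{\ge 0}^n, \qquad (z_1, \ldots, z_n) \mapsto (|z_1|^2, \ldots, |z_n|^2),
\]
which is continuous, surjective, constant on $T^n$-orbits, and separates them. Thus constructing charts for $Q$ reduces to pulling back $\mu$ along the weakly equivariant local models supplied by Definition \ref{locally standard action}.

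First I would build an atlas for $Q$. For each $q \in Q$ pick $x \in M$ with $\pi(x) = q$, where $\pi : M \to Q$ is the quotient map. By local standardness there exist a $T$-invariant open neighborhood $\mathcal{U}$ of $x$, an automorphism $\rho$ of $T$, an open $T^n$-invariant subset $W \subseteq \CC^n$, and a diffeomorphism $f : \mathcal{U} \to W$ with $f(ty) = \rho(t) f(y)$. Since $f$ carries $T$-orbits to $T^n$-orbits, it descends to a homeomorphism $\bar{f} : \mathcal{U}/T \to W/T^n$, and $\varphi := \mu \circ \bar{f} : \mathcal{U}/T \to \mu(W) \subseteq \RR_{\ge 0}^n$ is the desired chart on an open neighborhood of $q$. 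These sets $\mathcal{U}/T$ cover $Q$ because $\pi$ is an open map.

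Next I would verify that transitions satisfy the compatibility in Definition \ref{manifolds with corners}. Given two overlapping charts $(\mathcal{U}_i, f_i, \rho_i)$ and $(\mathcal{U}_j, f_j, \rho_j)$, the transition $\varphi_i \varphi_j^{-1}$ is induced on orbit spaces by $f_i \circ f_j^{-1}$, a diffeomorphism between open subsets of $\CC^n$ equivariant up to the automorphism $\rho_i \rho_j^{-1}$ of $T^n$. Any automorphism of $T^n$ preserving the standard action permutes the weight basis up to signs and therefore acts on $\CC^n$ by a signed permutation of coordinates, which descends to a permutation of coordinates on $\RR_{\ge 0}^n$. Combined with the classical fact that a $T^n$-invariant smooth function on an open subset of $\CC^n$ is a smooth function of $|z_1|^2, \ldots, |z_n|^2$, this shows $\varphi_i \varphi_j^{-1}$ is the restriction of a diffeomorphism between open subsets of $\RR^n$, as required.

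Finally, niceness follows from niceness of the local model: a codimension $k$ point of $\RR_{\ge 0}^n$ has exactly $k$ vanishing coordinates and hence sits in exactly $k$ coordinate hyperplanes, and this property is transported to $Q$ through the charts. The main technical obstacle I anticipate is the smoothness of the transitions at boundary and corner points, since the transition on the $\RR_{\ge 0}^n$ side is a priori only a homeomorphism; handling this rests on the invariant-function structure theorem for the standard $T^n$-action and on the observation that the toral automorphisms $\rho_i \rho_j^{-1}$ act trivially on the squared-modulus coordinates. Once this smoothness is in hand, the manifold-with-corners structure on $Q$ and its niceness are formal consequences of the local model.
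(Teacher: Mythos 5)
The paper itself offers no argument for this statement, it simply cites \cite{bu-pa-15}, so your proposal has to stand on its own. The chart construction is the right idea, and the smoothness of the transition maps is correctly reduced to the invariant-function theorem; note, however, that your intermediate claim that $\rho_i\rho_j^{-1}$ must be a signed permutation of coordinates is false in general (if two charts overlap only over free orbits, any automorphism of $T^n$ can occur, e.g.\ on $(\CC^*)^n\cong T^n\times\RR^n$ one can realize an arbitrary element of $GL_n(\Z)$), but it is also unnecessary: since $\mu$ is invariant under the whole torus, $\mu\circ f_i f_j^{-1}$ is $T^n$-invariant for an arbitrary automorphism, which is all the invariant-function theorem needs. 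A further small point: knowing that the transition and its inverse each extend to smooth maps on open subsets of $\RR^n$ is not literally the definition of a diffeomorphism of open subsets of $\RR^n_{\ge 0}$; you should add the (easy) remark that the Jacobian of the extension is the identity-composed-with-itself on the dense interior stratum, hence invertible up to the boundary, so the inverse function theorem upgrades the extension to a local diffeomorphism near every point of the chart image.

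The genuine gap is the last step, the niceness of $Q$. Niceness is not a local property and therefore cannot be ``transported to $Q$ through the charts'': by definition every manifold with corners is locally modeled on open subsets of $\RR^n_{\ge 0}$, yet not every one is nice. The standard counterexample is the two-dimensional ``teardrop'', a disc-like manifold with corners having a single vertex and a single facet whose closure approaches the vertex from both sides; there the codimension-two face lies in only one facet even though every local chart looks like a corner of $\RR^2_{\ge 0}$. So to prove niceness you must use the torus action globally, not just the local models. One way: the relative interior of a facet $F_i$ is connected, and the isotropy subgroup over it is locally constant, hence a single circle subgroup $T_{F_i}\subset T$; at a point of a codimension-$k$ open face the local model exhibits exactly $k$ local codimension-one strata, whose isotropy circles are the $k$ distinct coordinate circles of the corank-$k$ isotropy torus. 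If two of these local strata belonged to the same global facet $G$, the circle $T_G$ would have to equal two distinct coordinate circles, a contradiction; hence the $k$ local strata lie in $k$ pairwise distinct facets, and every facet containing the face arises this way, giving exactly $k$ facets as required. Without an argument of this kind (which is exactly where the hypothesis that $Q$ is an orbit space, rather than an abstract manifold with corners, enters), the proposition is not proved.
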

\begin{example}
The orbit space $Q$ of $S^4$ with the torus action given in subsection \ref{subsection of torus manifold with locally standard action} is a nice manifold with corners (See figure \ref{fig:cycle}). Here $F_1$ and $F_2$ are facets of $Q$ and $p$ and $q$ are $0$-dimension face of $Q$. $F_1\setminus\{p,q\}$ and $Q\setminus (F_1\cup F_2)$ are open faces of $Q$.

	\begin{figure}[h]
	\begin{center}
		\begin{tikzpicture}[scale=0.3]
			\pgfsetfillopacity{1}
			\coordinate[label = left: $F_1$] (A) at (-5,0);
			\coordinate[label = right: $F_2$] (A) at (5,0);
			\filldraw[fill=yellow!50, draw=blue, line width=0.5 mm] (0,-4) .. controls (-4,-1) and (-4,1) .. (0,4)
			.. controls (4,1) and (4,-1) .. (0,-4);
		    \node [circle, fill=red, inner sep = 0pt, minimum size=2mm] (p) at (0,4) {};
			\node [circle, fill=red, inner sep = 0pt, minimum size=2mm] (q) at (0,-4) {};
			\node [red,above] at (p.north) {p};
			\node [red,below] at (q.south) {q};	
			\draw [->,line width=0.3 mm] (-5.3,0) to (-3.3,0);	
			\draw [<-,line width=0.3 mm] (3.3,0) to (5.3,0);	
		\end{tikzpicture}
	\end{center}
	\caption{}
	\label{fig:cycle}
\end{figure}
\end{example}

\subsubsection{On characteristic submanifolds}
\begin{definition}
Let $M$ be a torus manifold with locally standard action. A connected codimension-two submanifold of $M$ is called characteristic if it is a connected component of the set fixed pointwise by a certain circle subgroup of $T$.
\end{definition}
\begin{example}
Let $T$ acts on $S^4$ as in subsection \ref{subsection of torus manifold with locally standard action}. Then 
$\{(z_1,z_2,h)\in S^4: z_1=0\}$ and $\{(z_1,z_2,h)\in S^4: z_2=0\}$ are characteristic submanifolds of $S^4.$
\end{example}
Since a torus manifold $M$ is compact, the number of characteristic submanifolds of $M$ is finite, and so is the number of characteristic submanifolds of an open torus manifold with locally standard action.
It is not difficult to obtain the following facts from locally standard action:
\begin{itemize}
\item
If $M_i$ and $M_j$ be two distinct characteristic submanifolds of $M$ then they intersects transversally;
\item 
The preimage $M_F=q^{-1}(F)$ of a codimension $k$ face $F\subseteq Q$ is a codimension $2k$ closed $T$-invariant submanifold of $M$ and $M_F$ is a connected component of an intersection of $k$ characteristic submanifolds, where $q: M\to Q$ is the quotient map.
\end{itemize}

The following result is well-known in toric topology, but for convenience for the readers we give a proof.
\begin{lemma}
Let $M$ be a torus manifold with locally standard action and $M_i$ be a characteristic submanifold of $M$. Then the normal bundle of $M_i$ in $M$ is a complex line bundle over $M_i$.
\end{lemma}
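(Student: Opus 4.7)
The plan is to produce an almost complex structure $J$ on the real rank-two normal bundle $\nu:=TM|_{M_i}/TM_i\to M_i$; this is equivalent to a complex line bundle structure on $\nu$. The isotropy circle $S^1_i\subset T$ that fixes $M_i$ pointwise acts on $\nu$ by bundle automorphisms covering the identity of $M_i$, and I would use this action to define $J$ fiberwise.

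First, I would fix a $T$-invariant Riemannian metric on $M$ by averaging any metric over $T$, and identify $\nu$ with the orthogonal complement of $TM_i$ inside $TM|_{M_i}$. At each $p\in M_i$, the linear $S^1_i$-action on $\nu_p$ is isometric because the metric is $T$-invariant. To see that the action is also faithful on $\nu_p$, I would use local standardness: some $T$-invariant neighborhood of $p$ is equivariantly diffeomorphic to an open subset of $\CC^n$ with the standard $T^n$-action, and under this identification $M_i$ corresponds locally to a coordinate hyperplane $\{z_j=0\}$, whose pointwise stabilizer is the $j$-th coordinate circle, acting on the normal direction $\CC\cdot\partial_{z_j}$ by ordinary complex rotation. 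Thus the $S^1_i$-representation on $\nu_p$ is a faithful isometric $S^1$-action on $\RR^2$, i.e.\ an honest planar rotation.

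Next, I would fix once and for all an isomorphism $\lambda_i:S^1\xrightarrow{\cong}S^1_i$ (a choice of generator of $S^1_i$). For each $p\in M_i$ there is then a unique complex structure $J_p$ on $\nu_p$ under which $\lambda_i(e^{it})$ acts as multiplication by $e^{it}$. Since $\lambda_i$ does not depend on $p$ and the induced $S^1_i$-action on the bundle $\nu$ is smooth, the pointwise collection $\{J_p\}_{p\in M_i}$ assembles into a smooth bundle endomorphism $J:\nu\to\nu$ with $J^2=-\text{id}$. This is the desired complex line bundle structure.

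The main subtlety, and the only step that requires care, is the consistency of the sign of $J$ across $M_i$: a faithful $S^1$-rotation on $\RR^2$ only determines a complex structure up to the choice of which generator of $S^1$ is to act as multiplication by $e^{it}$ versus $e^{-it}$. This is handled by the global choice of $\lambda_i$, which can be made once for all of $M_i$ because $S^1_i$ is a fixed subgroup of $T$ rather than a bundle of circles varying over $M_i$. Given this, smoothness of $J$ is automatic from smoothness of the $T$-action.
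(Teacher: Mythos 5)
Your proposal is correct and is essentially the paper's own argument: the paper also notes that the isotropy circle $\lambda_i(S^1)$ acts faithfully on each normal fiber (via effectiveness/local standardness) and defines the complex structure by letting $\sqrt{-1}$, i.e.\ $\lambda_i(e^{i\pi/2})$, act fiberwise, which is exactly your $J$. Your write-up just makes explicit the faithfulness check in the local model and the sign-consistency coming from fixing $\lambda_i$ once for all of $M_i$.
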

\begin{proof}
Let $\nu_i$ be the normal bundle of $M_i$ in $M$ and $M_i$ is fixed by a circle subgroup $\lambda_i(S^1)$ of $T$. Since $T$ acts on $M$ effectively, $\lambda_i(S^1)$ acts on each fiber of $\nu_i$ effectively. For any $v\in\nu_i$, we define $Jv :=\sqrt{-1}\cdot v.$ Hence $\nu_i$ is a complex line bundle over $M_i.$ 
\end{proof}
For each characteristic submanifold $M_i$ of $M$, we fix a complex structure on $\nu_i$. This determines an element $\phi_i$ in $H^2_T(M,M_i)$ by equivariant Thom isomorphism. From now on we denote the image of $\phi_i$ in $H^2_T(M)$ by $\tau_i$ through the restriction map \[H^2_T(M,M_i)\to H^2_T(M).\]

\vspace{5mm}
\section{Topological face ring}\label{topological face ring}
The notion topological face ring were introduced by L. Yu in \cite{yu-21}. In this section we give an equivalent definition of topological face ring and recall some properties on topological face rings.

Let $Q$ be a nice manifold with corners. We recall some notations 
\begin{itemize}
\item $\mcF :=\{\text{faces of}~Q\}$;
\item $\mathring{\mcF} :=\{\text{open faces of} ~Q\}$.
\end{itemize}
Let $\{F_1,\dots, F_m\}$ be the set of facets of $Q$. Since $Q$ is a nice manifold with corners, for each codimension $k$-face $F$ of $Q$, where $k\ge 1$, $F$ is a connected component of $F_{i_1}\cap\cdots\cap F_{i _k}$. Hence we can associated each face of $Q$ to a subset of $[m].$ Namely we have a map
\[
\begin{split}
\Psi: \mcF&\to 2^{m}\\
F&\mapsto\{i_1,\dots, i_k\}.
\end{split}
\]
If $E$ is a face of $F$, we have $\Psi(F) \subseteq \Psi(E).$

Let 
\[
A:=\bigoplus\limits_{F\in\mcF} H^*(F)[\mathbf{x}_F]
\]
where $H^*(F)[\mathbf{x}_F]$ means \red{$H^*(F;\mathbf{k})\bigotimes\limits_{\mathbf{k}}\mathbf{k}[x_{i_1},\dots, x_{i_k}]$} and $\{i_1,\dots, i_k\}=\Psi(F).$ For each element $\alpha\in A$, we denote the $F$-component of $\alpha$ by $\alpha_F$.  Let $\alpha_F=\sum\limits_{\mathbf{m}}a_{\mathbf{m},F}(\mathbf{x}_F)^{\mathbf{m}}$, where $a_{\mathbf{m},F}\in H^*(F)$, \red{$\mathbf{m}=(m_1,\dots, m_k)\in \Z_{\ge0}^k$}, $(\mathbf{x}_F)^{\mathbf{m}}=x_{i_1}^{m_1}\dots x_{i_k}^{m_k}$ and \red{$\deg(\alpha_F):=\deg(a_{\mathbf{m},F})+2(m_1+\dots+m_k).$} For an ordered pair of faces $(F,E)$, we  define a morphism $\phi_{FE}: H^*(F)[\mathbf{x}_F]\to H^*(E)[{}{\mathbf{x}_E}]$ as follows.
\begin{equation}\label{def of phiFE}
\phi_{FE}\left(\sum\limits_{\mathbf{m}}a_{\mathbf{m},F}(\mathbf{x}_F)^{\mathbf{m}}\right)=
\begin{cases}
\sum\limits_{\mathbf{m}}a_{\mathbf{m},F}|_E(\mathbf{x}_F)^{\mathbf{m}}\quad\text{if}~E\subseteq F\\
0\hspace{34mm}\text{otherwise}
\end{cases}
\end{equation}
where $a_{\mathbf{m},F}|_E$ is the restriction of $a_{\mathbf{m},F}$ on $E$ induced by the inclusion map $E\subseteq F.$

\begin{definition}\label{def of F-face elements}
Let $F$ be a face of $Q$ and $\Psi(F)=\{i_1,\dots, i_k\}$. An element $\alpha\in A$ is called an $F$-face element if the following two conditions hold:
\begin{itemize}
\item
$\alpha_F=\sum\limits_{\mathbf{m}}a_{\mathbf{m},F}(\mathbf{x}_F)^{\mathbf{m}}$ satisfying $\mathbf{m}\in\N_{>0}^k;$
\item
$\alpha_E=\phi_{FE}(\alpha_F).$
\end{itemize}
\end{definition}
\begin{definition}\label{topological face module}
The $\mathbf{k}$-submodule generated by the faces elements of $A$ is called a topological face $\mathbf{k}$-module, denoted by $\mathbf{k}[Q].$ Namely
\[
\mathbf{k}[Q]=\text{Span}_{\mathbf{k}}\langle\alpha: \alpha~\text{is a $F$-face element of $A$ for some $F\in\mcF$}\rangle.
\]
\end{definition}
\begin{lemma}\label{product of F-face elements}
Let $\alpha$ and $\alpha'$ be $F$-face elements, then $\alpha\alpha'$ is also an $F$-face element.
\end{lemma}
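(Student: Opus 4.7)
The plan is to verify the two conditions of Definition \ref{def of F-face elements} for the product $\alpha\alpha'$, where multiplication in $A = \bigoplus_{G \in \mcF} H^*(G)[\mathbf{x}_G]$ is taken componentwise, so that $(\alpha\alpha')_G = \alpha_G \alpha'_G$ for every face $G$. The argument splits naturally into checking the monomial-support condition at the $F$-component and checking the compatibility condition $(\alpha\alpha')_E = \phi_{FE}((\alpha\alpha')_F)$ at every other face $E$.

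For the first condition, write $\alpha_F = \sum_{\mathbf{m}} a_{\mathbf{m},F}(\mathbf{x}_F)^{\mathbf{m}}$ and $\alpha'_F = \sum_{\mathbf{n}} a'_{\mathbf{n},F}(\mathbf{x}_F)^{\mathbf{n}}$ with $\mathbf{m}, \mathbf{n} \in \N_{>0}^k$. Expanding $\alpha_F \alpha'_F$ yields a sum of terms of the form $(a_{\mathbf{m},F}\cdot a'_{\mathbf{n},F})(\mathbf{x}_F)^{\mathbf{m}+\mathbf{n}}$, and since the componentwise sum of two strictly positive integer vectors is again strictly positive, each exponent $\mathbf{m}+\mathbf{n}$ lies in $\N_{>0}^k$, so $(\alpha\alpha')_F$ satisfies the first bullet of Definition \ref{def of F-face elements}.

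For the second condition, fix a face $E$ and consider two cases. If $E \not\subseteq F$, then by (\ref{def of phiFE}) we have $\alpha_E = \phi_{FE}(\alpha_F) = 0$ and $\alpha'_E = \phi_{FE}(\alpha'_F) = 0$, so both $(\alpha\alpha')_E$ and $\phi_{FE}((\alpha\alpha')_F)$ vanish. If $E \subseteq F$, then $\Psi(F) \subseteq \Psi(E)$, so the variables $\mathbf{x}_F$ occur among the variables $\mathbf{x}_E$, and $\phi_{FE}$ acts on $H^*(F)[\mathbf{x}_F]$ by applying the cohomology restriction $H^*(F) \to H^*(E)$ induced by $E\hookrightarrow F$ to each coefficient, while leaving the polynomial variables fixed. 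Since cohomology restriction is a ring homomorphism, the map $\phi_{FE}: H^*(F)[\mathbf{x}_F]\to H^*(E)[\mathbf{x}_F]$ is a ring homomorphism, hence
\[
\phi_{FE}\bigl((\alpha\alpha')_F\bigr) = \phi_{FE}(\alpha_F\,\alpha'_F) = \phi_{FE}(\alpha_F)\,\phi_{FE}(\alpha'_F) = \alpha_E\,\alpha'_E = (\alpha\alpha')_E.
\]

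The only mild subtlety is notational: the image of $\phi_{FE}$ lies in the subring $H^*(E)[\mathbf{x}_F] \subseteq H^*(E)[\mathbf{x}_E]$, so the two sides above are first computed inside this subring and then viewed inside $H^*(E)[\mathbf{x}_E]$; this causes no ambiguity because the inclusion is a ring map. I do not expect a genuine obstacle here—the proof is a direct bookkeeping check, and the essential structural fact used is that each $\phi_{FE}$ restricted to its natural domain is a ring homomorphism, so the set of $F$-face elements is closed under the componentwise multiplication of $A$.
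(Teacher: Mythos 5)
Your proof is correct and is essentially the paper's argument, just written out in full: the paper's proof is the one-line observation that the claim follows from Definition \ref{def of F-face elements} together with the fact that each $\phi_{FE}$ is a ring homomorphism, which is exactly the structural fact you isolate and use (plus the routine check that sums of strictly positive exponent vectors remain strictly positive). No gap; your case split on $E\subseteq F$ versus $E\nsubseteq F$ and the remark about $H^*(E)[\mathbf{x}_F]\subseteq H^*(E)[\mathbf{x}_E]$ are just the details the paper leaves implicit.
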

\begin{proof}
This lemma follows from Definition \ref{def of F-face elements} and the observation that $\phi_{FE}$ is a ring homomorphism for any ordered pair $(F,E)$.
\end{proof}
Let $(E,G)$ be an ordered pair, then for any $E$-face element $\alpha$, we can define an element  $\Theta_{EG}(\alpha)$ of $A$ as follows: for $F\in\mcF$ 
\begin{equation}\label{def of ThetaEG}
\Big(\Theta_{EG}(\alpha)\Big)_F=\phi_{GF}\Big(\phi_{EG}(\alpha_E)\Big).
\end{equation}
\begin{lemma}\label{from E-face element to G-face element}
Let $(E,G)$ be an ordered pair and $\alpha$ be an $E$-face element, then $\Theta_{EG}(\alpha)$ is a $G$-face element.
\end{lemma}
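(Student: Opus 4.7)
The plan is to verify directly that $\Theta_{EG}(\alpha)$ satisfies the two defining conditions of a $G$-face element from Definition~\ref{def of F-face elements}: that its $G$-component has the prescribed form with exponent vector in $\mathbb{N}_{>0}^{|\Psi(G)|}$, and that for every face $F\in\mcF$ its $F$-component equals $\phi_{GF}$ applied to the $G$-component.

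I would start by unwinding the definition of $\Theta_{EG}$ and observing that $\phi_{GG}$ is the identity on $H^*(G)[\mathbf{x}_G]$, so that
\[
\big(\Theta_{EG}(\alpha)\big)_G \;=\; \phi_{GG}\!\left(\phi_{EG}(\alpha_E)\right) \;=\; \phi_{EG}(\alpha_E).
\]
The compatibility condition is then immediate: for every $F\in\mcF$,
\[
\phi_{GF}\!\left(\big(\Theta_{EG}(\alpha)\big)_G\right) \;=\; \phi_{GF}\!\left(\phi_{EG}(\alpha_E)\right) \;=\; \big(\Theta_{EG}(\alpha)\big)_F
\]
by the very formula~\eqref{def of ThetaEG} defining $\Theta_{EG}$. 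So this half of the verification is essentially free, and I would dispense with it in a single line.

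The substance lies in the exponent condition on $\big(\Theta_{EG}(\alpha)\big)_G = \phi_{EG}(\alpha_E)$. I would dispatch it by a case split on the inclusion $G\subseteq E$. If $G\not\subseteq E$, then by the second line of~\eqref{def of phiFE} we have $\phi_{EG}(\alpha_E)=0$, so $\Theta_{EG}(\alpha)=0$ and the condition holds vacuously. Otherwise $G\subseteq E$, hence $\Psi(E)\subseteq\Psi(G)$ by the inclusion remark in the excerpt, and the hypothesis that $\alpha$ is an $E$-face element gives $\alpha_E=\sum_{\mathbf{m}}a_{\mathbf{m},E}(\mathbf{x}_E)^{\mathbf{m}}$ with $\mathbf{m}\in\mathbb{N}_{>0}^{|\Psi(E)|}$; the top line of~\eqref{def of phiFE} then yields $\phi_{EG}(\alpha_E)=\sum_{\mathbf{m}}a_{\mathbf{m},E}|_G\,(\mathbf{x}_E)^{\mathbf{m}}$, and the prescribed shape of $\big(\Theta_{EG}(\alpha)\big)_G$ must be read off by translating this exponent vector under the inclusion $\Psi(E)\subseteq\Psi(G)$.

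The main obstacle is precisely this last step: one must carefully match the exponent conventions between $\mathbf{x}_E$ and $\mathbf{x}_G$, and confirm that the monomial $(\mathbf{x}_E)^{\mathbf{m}}$, reinterpreted inside $\mathbf{x}_G$, meets the positivity requirement built into Definition~\ref{def of F-face elements}. Everything else in the argument is a direct unwinding of the definitions of $\phi_{FE}$ and $\Theta_{EG}$; in particular, no use of ring structure or of Lemma~\ref{product of F-face elements} is needed.
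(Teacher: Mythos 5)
Your skeleton coincides, up to the point where you stop, with the paper's own proof: the paper also treats the case $G\nsubseteq E$ by noting $\phi_{EG}(\alpha_E)=0$, hence $\Theta_{EG}(\alpha)=0$, and it also gets the compatibility condition $\big(\Theta_{EG}(\alpha)\big)_F=\phi_{GF}\big((\Theta_{EG}(\alpha))_G\big)$ for free from formula (\ref{def of ThetaEG}). But the step you explicitly defer --- confirming that $(\Theta_{EG}(\alpha))_G=\phi_{EG}(\alpha_E)$ meets the exponent condition of Definition \ref{def of F-face elements} when $G\subseteq E$ --- is the entire content of the lemma, so as written your proposal is not a proof. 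Worse, that confirmation is not a bookkeeping exercise about ``translating exponent vectors'': if $G\subsetneq E$ then $\Psi(E)\subsetneq\Psi(G)$, and $\phi_{EG}(\alpha_E)=\sum_{\mathbf m}a_{\mathbf m,E}|_G\,(\mathbf x_E)^{\mathbf m}$ involves none of the variables $x_j$ with $j\in\Psi(G)\setminus\Psi(E)$; read in $\mathbf x_G$, its exponent vectors have zero entries in those slots, so the requirement $\mathbf m\in\N_{>0}^{|\Psi(G)|}$ fails whenever some restricted coefficient $a_{\mathbf m,E}|_G$ is nonzero. Concretely, take $Q=\RR^2_{\ge0}$ with facets $F_1=\{x_1=0\}$, $F_2=\{x_2=0\}$, let $E=F_1$, let $G$ be the vertex, and let $\alpha$ be the $E$-face element with $\alpha_E=x_1$; then $(\Theta_{EG}(\alpha))_G=x_1$, which is not a $\mathbf k$-combination of monomials $x_1^{m_1}x_2^{m_2}$ with $m_1,m_2\ge 1$. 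So the check you hoped to ``read off'' cannot be carried out under the literal definition.

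To be fair, the paper's proof is no more detailed at this point: in its Case 2 it simply asserts that $(\Theta_{EG}(\alpha))_G=\phi_{EG}(\alpha_E)$ ``satisfies the first condition,'' with no argument. What actually makes the later application sound is that the subring proposition only ever uses products $\Theta_{E_1G}(\alpha)\,\Theta_{E_2G}(\beta)$ for $G$ a face that is a connected component of $E_1\cap E_2$; there niceness forces $\Psi(G)=\Psi(E_1)\cup\Psi(E_2)$, so the product of a polynomial positive in the $\Psi(E_1)$-variables with one positive in the $\Psi(E_2)$-variables is genuinely positive in all $\Psi(G)$-variables. Alternatively, one can repair the lemma itself by weakening the first bullet of Definition \ref{def of F-face elements} (requiring positivity only of the exponents of the variables indexed by the face whose element one transports). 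Either way, your write-up stops exactly where an argument or a reinterpretation is needed, and that is a genuine gap.
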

\begin{proof}
{\bf{Case 1}:} $G\nsubseteq E$. Then $\phi_{EG}(\alpha_E)=0$ by (\ref{def of phiFE}). Hence by (\ref{def of ThetaEG}), for each $F\in\mcF$, we have $\Big(\Theta_{EG}(\alpha)\Big)_F=0,$ which means that $\Theta_{EG}(\alpha)=0$. In particular, $\Theta_{EG}(\alpha)$ is a $G$-face element.

{\bf{Case 2}:} $G\subseteq E$. Then 
\[\Big(\Theta_{EG}(\alpha)\Big)_G=\phi_{EG}(\alpha_E)\]
by (\ref{def of ThetaEG}), 
which means that $\Theta_{EG}(\alpha)$ satisfies the first condition of Definition \ref{def of F-face elements} for $G$-face elements. Let $F\in\mcF$. By (\ref{def of ThetaEG}) and the last equality, we have 
\[\Theta_{EG}(\alpha)_F=\phi_{GF}\Big(\Theta_{EG}(\alpha)_G\Big)\]
which means that $\Theta_{EG}(\alpha)$  satisfies the second condition of Definition \ref{def of F-face elements} for $G$-face elements. Therefore $\Theta_{EG}(\alpha)$ is a $G$-face element.
\end{proof}
\begin{proposition}
Let $Q$ be a nice manifold with corners. Then its topological face $\mathbf{k}$-module
$\mathbf{k}[Q]$ is a subring of $A$.
\end{proposition}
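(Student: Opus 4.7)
The plan is to show $\mathbf{k}[Q]$ is closed under the multiplication of $A$; since it is already a $\mathbf{k}$-submodule by Definition \ref{topological face module}, this will finish the proof. By bilinearity it suffices to check that for any $E$-face element $\alpha$ and any $F$-face element $\beta$, the product $\alpha\beta$ is a $\mathbf{k}$-linear combination of face elements.

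Multiplication in $A=\bigoplus_{G\in\mcF}H^*(G)[\mathbf{x}_G]$ is component-wise, so $(\alpha\beta)_G=\alpha_G\beta_G$. By the second condition of Definition \ref{def of F-face elements}, $\alpha_G=\phi_{EG}(\alpha_E)$ vanishes unless $G\subseteq E$, and similarly for $\beta_G$. Hence $\alpha\beta$ is supported on faces $G$ with $G\subseteq E\cap F$. I would decompose $E\cap F$ into its connected components $\{H_\lambda\}$. A standard property of nice manifolds with corners (Definition \ref{nice manifold with corners}), verifiable by looking at any local chart $\varphi_i\colon U_i\to W_i\subseteq\RR^n_{\ge 0}$ at an interior point of $H_\lambda$ and using that a codimension-$k$ point lies in exactly $k$ facets, is that each $H_\lambda$ is itself a face of $Q$ and satisfies $\Psi(H_\lambda)=\Psi(E)\cup\Psi(F)$. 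Since any face $G\subseteq E\cap F$ is connected, it lies in a unique component $H_\lambda$; consequently, if I define $\gamma_\lambda\in A$ by $(\gamma_\lambda)_G=(\alpha\beta)_G$ when $G\subseteq H_\lambda$ and $0$ otherwise, then $\alpha\beta=\sum_\lambda\gamma_\lambda$.

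It then remains to verify each $\gamma_\lambda$ is an $H_\lambda$-face element. For the first condition of Definition \ref{def of F-face elements},
\[
(\gamma_\lambda)_{H_\lambda}=\phi_{EH_\lambda}(\alpha_E)\cdot\phi_{FH_\lambda}(\beta_F);
\]
since the exponents appearing in $\alpha_E$ are strictly positive in every variable indexed by $\Psi(E)$, those in $\beta_F$ are strictly positive in every variable indexed by $\Psi(F)$, and $\Psi(E)\cup\Psi(F)=\Psi(H_\lambda)$, each monomial in the product has strictly positive exponent in every variable of $\mathbf{x}_{H_\lambda}$. For the second condition, for $G\subseteq H_\lambda$ the transitivity of restriction in cohomology gives $\phi_{EG}=\phi_{H_\lambda G}\circ\phi_{EH_\lambda}$ and likewise with $F$ in place of $E$; since $\phi_{H_\lambda G}$ is a ring homomorphism, I obtain $(\gamma_\lambda)_G=\phi_{EG}(\alpha_E)\cdot\phi_{FG}(\beta_F)=\phi_{H_\lambda G}((\gamma_\lambda)_{H_\lambda})$, while for $G\not\subseteq H_\lambda$ both sides vanish by construction.

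The main obstacle is the combinatorial identity $\Psi(H_\lambda)=\Psi(E)\cup\Psi(F)$ for each connected component $H_\lambda$ of $E\cap F$; the inclusion $\supseteq$ holds by definition of $\Psi$, but the reverse inclusion requires the niceness of $Q$ and a passage through a local chart. Once this is established, everything else is routine bookkeeping with Definition \ref{def of F-face elements}, the functoriality of restriction, and the multiplicativity of $\phi_{HG}$.
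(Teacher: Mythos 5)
Your proof is correct and takes essentially the same route as the paper: you decompose $E\cap F$ into its connected components $H_\lambda$ and show that the truncation $\gamma_\lambda$ of $\alpha\beta$ to faces contained in $H_\lambda$ (which is exactly the paper's $\Theta_{EH_\lambda}(\alpha)\,\Theta_{FH_\lambda}(\beta)$) is an $H_\lambda$-face element, so that $\alpha\beta=\sum_\lambda\gamma_\lambda\in\mathbf{k}[Q]$. The only real difference is organizational: the paper packages the verification into Lemma \ref{product of F-face elements} and Lemma \ref{from E-face element to G-face element} and merely asserts that the components of $E\cap F$ are faces, whereas you check the two conditions of Definition \ref{def of F-face elements} directly and make explicit the combinatorial identity $\Psi(H_\lambda)=\Psi(E)\cup\Psi(F)$ (valid for nice $Q$, and genuinely needed to get strictly positive exponents in \emph{all} variables of $\mathbf{x}_{H_\lambda}$ -- a point the paper's argument leaves implicit), while correctly flagging that this identity is the one step requiring niceness and a local chart.
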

\begin{proof}
It is enough to show that if $\alpha$ and $\beta$ are $E_1$-face element and $E_2$-face element of $A$ respectively, then 
$\alpha\beta\in\mathbf{k}[Q].$

{\bf{Case 1}:} $E_1\cap E_2=\emptyset.$ Let $F\in\mcF$, then $F\nsubseteq E_1\cap E_2$. Hence $F\nsubseteq E_1$ or $F\nsubseteq E_2$ which means $\phi_{E_1F}(\alpha_{E_1})=0$ or $\phi_{E_2F}(\beta_{E_2})=0$. Therefore we have
 \[
 \begin{split}
 (\alpha\beta)_F&=\alpha_F\beta_F\\
 &=\phi_{E_1F}(\alpha_{E_1})\phi_{E_2F}(\beta_{E_2})\\
 &=0.
 \end{split}
 \]
 This implies that $\alpha\beta=0\in\mathbf{k}[Q]$ for this case.

{\bf{Case 2}:} $E_1\cap E_2\ne\emptyset.$ Then $E_1\cap E_2=G_1\sqcup\cdots \sqcup G_h$ where $G_i$'s are faces of $Q.$ For each $i\in\{1,\dots, h\}$, $\Theta_{E_1G_i}(\alpha)$ and $\Theta_{E_2G_i}(\beta)$ are $G_i$-face elements by Lemma \ref{from E-face element to G-face element}, so 
$
\Theta_{E_1G_i}(\alpha)\Theta_{E_2G_i}(\beta)$
is also a $G_i$-face element by Lemma \ref{product of F-face elements}.  We claim that 
\[\alpha\beta=\sum\limits_{i=1}^h\Theta_{E_1G_i}(\alpha)\Theta_{E_2G_i}(\beta).\]
It suffices to show that 
\begin{equation}\label{alpha beta F=sum part}
(\alpha\beta)_F=\sum\limits_{i=1}^h(\Theta_{E_1G_i}(\alpha)\Theta_{E_2G_i}(\beta))_F
\end{equation}
holds for any $F\in\mcF.$

 {\bf{Case 2-1}:} $F\nsubseteq E_1\cap E_2$. By the same argument for Case 1, we have 
\[(\alpha\beta)_F=0.\]
Since $F\nsubseteq E_1\cap E_2$, for any $i\in\{1,\dots, h\}$ we have $F\nsubseteq G_i$.
Hence the $F$-component of the $G_i$-face element $\Big(\Theta_{E_1G_i}(\alpha)\Theta_{E_2G_i}(\beta)\Big)$ is $0$ by (\ref{def of phiFE}). Therefore we have 
\[\sum\limits_{i=1}^h\Big(\Theta_{E_1G_i}(\alpha)\Theta_{E_2G_i}(\beta)\Big)_F=0.\]
Therefore (\ref{alpha beta F=sum part}) holds for this case.

{\bf{Case 2-2}:} $F\subseteq E_1\cap E_2$. Then there exists $i_0\in\{1,\dots, h\}$ such that $F\subseteq G_{i_0}.$ Hence we have
\[\begin{split}
(\alpha\beta)_F&=\alpha_F\beta_F\\
&=\phi_{E_1F}(\alpha_{E_1})\phi_{E_2F}(\beta_{E_2})\\
&=\Big(\phi_{G_{i_0}F}\circ\phi_{E_1G_{i_0}}(\alpha_{E_1})\Big)\Big(\phi_{G_{i_0}F}\circ\phi_{E_2G_{i_0}}(\beta_{E_2})\Big)\\
&=\Big(\Theta_{E_1G_{i_0}}(\alpha)\Big)_F\Big(\Theta_{E_2G_{i_0}}(\beta)\Big)_F\\
&=\Big(\Theta_{E_1G_{i_0}}(\alpha)\Theta_{E_2G_{i_0}}(\beta)\Big)_F.
\end{split}\]
Since $G_i\cap G_{i_0}=\emptyset$ for   $i\in\{1,\dots,h\}\setminus\{i_0\},$ we have $F\nsubseteq G_i$. Hence for $i\ne i_0$, we have 
\[\Big(\Theta_{E_1G_{i}}(\alpha)\Theta_{E_2G_{i}}(\beta)\Big)_F=0.\]
Therefore (\ref{alpha beta F=sum part}) also holds for this case. Hence the claim holds which implies that $\alpha\beta\in\mathbf{k}[Q]$ for Case 2. 
\end{proof}
\begin{definition}
We call $\mathbf{k}[Q]$, defined in Definition \ref{topological face module}, a topological face ring of $Q$.
\end{definition}

\vspace{3mm}
\section{\red{Higher dimensional collar
neighborhoods of faces}}\label{local observation section}
In this section we give the details of higher dimensional corllar neighborhoods theorem for faces of nice manifolds with corners. The results in this section may be well-known, but for the reader's convenience we give proofs.

Let $M$ be a $2n$-dimensional torus manifold with a locally standard action. Let $Q=M/T$ be the orbit space of $M$. Then $Q$ is a nice manifold with corners and the faces structure is determined by the orbit type of torus action on $M$ (see \cite[Proposition 7.4.13]{bu-pa-15}). The followings are the notations which we needed in this section.
\begin{itemize}
\item $q: M\to Q$ be quotient map. 
\item $\mcF$ is the set of faces of $Q$.
\item $\mcrF$ is the set of open faces of $Q$.
\item {$\mrF$} is an open face which is the relative interior of a face $F$.
\item For $F\in\mcF$, $M_F :=q^{-1}(F)$ and $M_{\mrF}:=q^{-1}(\mrF)$
\end{itemize}

\begin{lemma}\label{T-invariant open tubular nbhd of MF}\cite[Theorem 2.2 in Chapter VI]{bre-72}
\red{For $F\in\mcF$, there} exists a $T$-invariant open tubular neighbourhood $X_F$ of $M_F$ in $M$ such that $X_F$ is $T$-equivariant diffeomorphic to the $T$-equivariant normal bundle $\nu_F$ of the embedding $M_F\subset M.$
\end{lemma}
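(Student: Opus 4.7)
The plan is to carry out the standard equivariant tubular neighborhood construction in four steps; since $T$ is compact, the averaging trick will promote the classical (non-equivariant) tubular neighborhood theorem to the $T$-equivariant setting.

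First I would put a $T$-invariant Riemannian metric $g$ on $M$, obtained by averaging any smooth Riemannian metric against the Haar measure on the compact torus $T$. Because the locally standard action is smooth and $F$ is a face of the nice manifold with corners $Q = M/T$, the preimage $M_F = q^{-1}(F)$ is a closed $T$-invariant smooth submanifold of $M$. Using $g$, I would then define the equivariant normal bundle as the orthogonal complement $\nu_F := (TM_F)^\perp \subseteq TM|_{M_F}$; since $T$ acts by isometries and preserves $M_F$, this orthogonal decomposition is $T$-equivariant, giving $\nu_F$ the structure of a $T$-equivariant vector bundle over $M_F$.

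Next I would exploit the exponential map $\exp: \mathcal{D} \to M$ associated with $g$, where $\mathcal{D} \subseteq TM$ is an open neighborhood of the zero section on which $\exp$ is defined. Since the metric is $T$-invariant, geodesics are $T$-equivariant, and therefore $\exp$ is $T$-equivariant. Restricted to $\nu_F$, the derivative of $\exp$ at any zero-section point $(x,0)$ agrees with the identity under the canonical splitting $T_{(x,0)}\nu_F \cong T_xM_F \oplus \nu_{F,x}$, so the inverse function theorem yields, pointwise, an open neighborhood of the zero section on which $\exp$ is a local diffeomorphism.

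The main obstacle, and the one truly requiring work, is upgrading these pointwise local diffeomorphisms to a single $T$-invariant open tube on which $\exp$ is injective; this is subtle here because $M$ is an \emph{open} torus manifold, so $M_F$ need not be compact. The plan is to produce a $T$-invariant continuous positive function $\epsilon : M_F \to \RR_{>0}$ (equivalently, a continuous positive function on the Hausdorff paracompact orbit space $F$, lifted to $M_F$) such that $\exp$ is an injective local diffeomorphism on the tube
\[
U := \{\, v \in \nu_F : \|v\|_g < \epsilon(\pi(v)) \,\},
\]
where $\pi: \nu_F \to M_F$ is the bundle projection. Existence of such an $\epsilon$ follows by the standard diagonal/compactness argument: for each orbit $T \cdot x \subseteq M_F$, compactness of the orbit and of $T$ lets us pick a uniform radius on which $\exp$ is injective, and a partition of unity on the paracompact manifold $F$ knits these local radii into a globally continuous $T$-invariant function. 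Taking $X_F := \exp(U)$ then gives a $T$-invariant open tubular neighborhood of $M_F$ in $M$, and $\exp|_U : \nu_F \supseteq U \xrightarrow{\;\cong\;} X_F$ is the required $T$-equivariant diffeomorphism; absorbing $U$ into $\nu_F$ via fiberwise scaling by a $T$-invariant diffeomorphism $\RR_{\ge 0} \supseteq [0,\epsilon(x)) \to \RR_{\ge 0}$ yields the final identification $X_F \cong_T \nu_F$.
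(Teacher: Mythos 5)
Your proposal is correct. The paper gives no argument for this lemma at all: it is stated as a citation of Bredon (Theorem 2.2, Chapter VI), and what you have written out is essentially the standard proof of that cited result — average a metric over the compact torus $T$, realize $\nu_F$ as the orthogonal complement of $TM_F$, use the $T$-equivariant exponential map, restrict to an invariant tube of variable radius, and identify the tube with $\nu_F$ by equivariant fiberwise rescaling. The only point you compress is the injectivity of $\exp$ on the variable-radius tube over the closed (possibly non-compact, since $M$ is open) submanifold $M_F$; your device of descending the radius function to a continuous positive function on the paracompact orbit space $F$ and lifting it back is the right way to get both continuity and $T$-invariance, so the argument goes through as sketched.
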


Hence from now on we can identify the $T$-equivariant normal bundle $\nu_F$ with a $T$-invariant tubular neighbourhood $X_F$ of $M_F$ in $M$. Let $\pi_F: X_F\to M_F$ be the projection induced by the projection map of the normal bundle $\nu_F \rightarrow M_{F}$ and \begin{equation}\label{X mathring F}
X_{\mrF}:=\pi_{F}^{-1}(M_{\mrF}). 
\end{equation}
Since $\mrF$ is a relative interior part of $F$, $X_\mrF$ is a $T$-invariant open subset of $X_F$, which means that $X_\mrF$ is an open torus manifold with locally standard action. Therefore the orbit space $X_{\mrF}/T$ is a nice manifold with corners (see \cite{bu-pa-15}). We observe the following useful fact which will be used in the proof of our main theorem. In the remaining part of this section, we devote to show the following proposition.
\begin{proposition}\label{one important observation}
Each face of $X_{\mrF}/T$ is homotopy equivalent to $F$.
\end{proposition}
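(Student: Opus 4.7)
The plan is to identify $X_{\mrF}/T$ explicitly as an $\RR^k_{\geq 0}$-bundle over $\mrF$, read off its face lattice from the fiber, and then compare $\mrF$ with $F$ via a collar argument.

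First, by Lemma~\ref{T-invariant open tubular nbhd of MF} and the definition $X_{\mrF}=\pi_F^{-1}(M_{\mrF})$, I identify $X_{\mrF}$ with $\nu_F|_{M_{\mrF}}$. Since $F$ has codimension $k$ in $Q$, the preimage $M_F$ is a connected component of an intersection of $k$ characteristic submanifolds, and at every point of $M_{\mrF}$ the isotropy group is exactly the rank-$k$ subtorus $T_F\subseteq T$ fixing $M_F$ pointwise. Local standardness supplies, around any $x\in M_{\mrF}$, an invariant chart $\CC^n=\CC^k\times\CC^{n-k}$ with $T^n=T^k\times T^{n-k}$ acting coordinatewise, in which $M_F$ is cut out by the vanishing of the first $k$ coordinates and $\nu_F$ is the trivial $\CC^k$-summand. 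Taking quotients in this local model,
\[
((\CC^*)^{n-k}\times\CC^k)/T^n \;\cong\; \RR^{n-k}_{>0}\times\RR^k_{\geq 0},
\]
so locally $X_{\mrF}/T\cong \mrF\times\RR^k_{\geq 0}$.

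Next, these local models glue, using the canonical $T_F$-weight decomposition $\nu_F|_{M_F}=\nu_{F,1}\oplus\cdots\oplus\nu_{F,k}$ into complex line subbundles, to a fiber bundle $X_{\mrF}/T\to\mrF$ with fiber $\RR^k_{\geq 0}$ whose transition functions preserve the fiber face lattice. The faces of $\RR^k_{\geq 0}$ are indexed by subsets $I\subseteq\{1,\ldots,k\}$, with face $\{x : x_i=0\text{ for }i\in I\}\cong\RR^{k-|I|}_{\geq 0}$, so each face of $X_{\mrF}/T$ is the total space of the corresponding $\RR^{k-|I|}_{\geq 0}$-subbundle over $\mrF$. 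The fiberwise radial map $((x,v),t)\mapsto(x,(1-t)v)$, using the canonical zero section of $\nu_F$, is a globally defined strong deformation retraction of each such subbundle onto $\mrF$.

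Finally, $F$ is itself a nice manifold with corners whose open interior (as a manifold with corners) is precisely $\mrF$, so the iterated collar neighborhood theorem yields a deformation retraction of $F$ onto $\mrF$, giving $\mrF\simeq F$. Composing the two homotopy equivalences shows that each face of $X_{\mrF}/T$ is homotopy equivalent to $F$. The main obstacle will be the gluing step: patching the local trivializations to a genuine fiber bundle requires the canonicity of the $T_F$-weight decomposition of $\nu_F$ on the connected $M_F$, which follows because the $T_F$-weights are forced by the local standard model and are therefore locally constant, hence constant on $M_F$; consequently the transition functions of $\nu_F$ respect the weight summands and descend to the quotient preserving the face lattice of the fiber.
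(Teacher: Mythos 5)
Your proof is correct, but it takes a genuinely different route from the paper at the key step. The paper (Lemma \ref{local model for facet} and Corollary \ref{local model for faces}) proves the stronger statement that $\nu_F/T$ is homeomorphic, as a manifold with corners, to the product $F\times\RR^k_{\ge 0}$: using the splitting $\nu_F\cong\bigoplus_{j=1}^k\nu_{F_{i_j}}|_{M_F}$ it constructs, for each facet, an explicit section $M_F\to\nu_{F_{i_j}}/T_{i_j}$ out of the unit circle bundle (on which $T_{i_j}$ acts freely on fibers), identifies $\nu_F/T_F$ with $M_F\times\RR^k_{\ge 0}$ as $T/T_F$-spaces, and then passes to the $T/T_F$-quotient, so that the faces of $\nu_{\mrF}/T\cong\mrF\times\RR^k_{\ge 0}$ are read off as $\mrF\times\RR^{i}_{\ge 0}$. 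You never establish (nor need) this global triviality: you use the same weight/facet splitting only to identify each closed face of $X_{\mrF}/T$ with the quotient of a subbundle $\bigoplus_{j\notin I}\nu_{F_{i_j}}|_{M_{\mrF}}$, and then retract it onto the zero section by the fiberwise scaling, which is $T$-equivariant and hence descends; combined with $\mrF\simeq F$ via collars (a fact the paper also invokes), this yields the proposition. Your route is shorter and avoids the section construction; in fact your intermediate claim that $X_{\mrF}/T\to\mrF$ is a fiber bundle with face-preserving transition functions, the sketchiest part of your write-up, can be dropped entirely, since all you actually use is the identification of the faces (which follows from the isotropy computation in the local standard chart, i.e.\ the codimension of the orbit of $(x,v)$ equals the number of vanishing weight components of $v$) together with equivariance of scaling. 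What the paper's heavier argument buys is the homeomorphism statement of Corollary \ref{local model for faces}(2), which is reused later in the paper (for instance in Case 2-3 of the proof of Lemma \ref{E-face element in kernel}, where the composite $(M_F\cap X_{F_\bullet})/T\to F_\bullet\to (M_F\cap X_{F_\bullet})/T$ is shown to be homotopic to the identity); your weaker conclusion would not substitute for that, but it fully suffices for Proposition \ref{one important observation} itself.
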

\begin{lemma}\label{X mathring F T-equivariant diff to nu mathring F}
The open torus manifold $X_{\mrF}$ is $T$-equivariant diffeomorphic to $\nu_{\mrF}:=\nu_{F}|_{M_\mrF}$. 
\end{lemma}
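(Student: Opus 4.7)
The plan is to unwind the definitions and reduce this to the content of Lemma \ref{T-invariant open tubular nbhd of MF}. By that lemma we have a $T$-equivariant diffeomorphism $\Phi: X_F \to \nu_F$ from the tubular neighbourhood onto the total space of the equivariant normal bundle. The projection $\pi_F : X_F \to M_F$ introduced just before the statement is defined precisely as the composition of $\Phi$ with the bundle projection $p: \nu_F \to M_F$, so the square
\[
\begin{tikzcd}
X_F \arrow[r, "\Phi"] \arrow[d, "\pi_F"'] & \nu_F \arrow[d, "p"] \\
M_F \arrow[r, equal] & M_F
\end{tikzcd}
\]
commutes. Hence $\Phi$ intertwines the two projections.

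Next I would restrict $\Phi$ over the open subset $M_{\mrF} \subset M_F$. Since $M_{\mrF}$ is $T$-invariant (it is the preimage under $q$ of the open face $\mrF$), the commutativity above gives
\[
\Phi\bigl(\pi_F^{-1}(M_{\mrF})\bigr) = p^{-1}(M_{\mrF}) = \nu_F\big|_{M_{\mrF}}.
\]
By the definitions \eqref{X mathring F} of $X_{\mrF}$ and of $\nu_{\mrF}$ in the statement, the left-hand side is $X_{\mrF}$ and the right-hand side is $\nu_{\mrF}$. Since the restriction of a $T$-equivariant diffeomorphism to a $T$-invariant open subset is again a $T$-equivariant diffeomorphism onto its image, we obtain $\Phi|_{X_{\mrF}} : X_{\mrF} \xrightarrow{\cong} \nu_{\mrF}$, which is the desired conclusion.

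There is no real obstacle here: the lemma is essentially a naturality statement bookkeeping the fact that a tubular neighbourhood identification automatically restricts over open subsets of the base. The only point to keep in mind is that $M_{\mrF}$ is $T$-invariant and open in $M_F$, which is immediate because $q : M \to Q$ is the quotient by $T$ and $\mrF$ is open in $F$.
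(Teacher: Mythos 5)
Your proposal is correct and follows essentially the same route as the paper, which simply states that the lemma is checked directly from the definitions of $X_{\mrF}$ and $\pi_F$; you have just written out that check explicitly, using that $\pi_F$ is the composition of the tubular-neighbourhood identification of Lemma \ref{T-invariant open tubular nbhd of MF} with the bundle projection and that restricting a $T$-equivariant diffeomorphism to a $T$-invariant open subset stays a $T$-equivariant diffeomorphism. No gap.
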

\begin{proof}
	This is easily check directly from the definition of $X_{\mrF}$ and $\pi_F: X_F\to M_F$.
	\end{proof}
This lemma shows that $\nu_{\mrF}$ is an open torus manifold with locally standard action and
\begin{equation}
X_{\mrF}/T\cong\nu_{\mrF}/T
\end{equation}
as nice manifolds with corners. 
\begin{lemma}\label{local model for facet}
If $F$ is a facet of $Q=M/T$, then 
\begin{enumerate}
\item $\nu_F/T$ is homeomorphic to $F\times\RR_{\ge 0}$ as manifolds with corners.
\item each face of $\nu_{\mrF}/T$ is homotopy equivalent to $F$.
\end{enumerate}
\end{lemma}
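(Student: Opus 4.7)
The plan is to prove (1) by a two-stage quotient: first by the circle subgroup $\lambda_F(S^1)\subseteq T$ that acts on the fibers of $\nu_F$, and then by the residual torus $T/\lambda_F(S^1)$ acting on $M_F$. Part (2) will follow quickly from (1) together with a collar-neighborhood argument.

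For (1), since $F$ is a facet there is a unique circle $\lambda_F(S^1)\subseteq T$ pointwise fixing $M_F$, and it acts faithfully by complex multiplication on each fiber of the complex line bundle $\nu_F\to M_F$. I would first average to obtain a $T$-invariant Hermitian metric on $\nu_F$, reducing the structure group to $U(1)$. Because the action of $\lambda_F(S^1)=U(1)$ on each fiber preserves the norm, on any local trivialization $U\times\CC$ the $\lambda_F(S^1)$-quotient is $U\times\RR_{\ge 0}$ via $[x,z]\mapsto(x,|z|)$, and because the $U(1)$-valued transition functions of $\nu_F$ preserve the norm, these local identifications glue to a global homeomorphism
\[
\nu_F/\lambda_F(S^1)\;\cong\;M_F\times\RR_{\ge 0}.
\]
The residual group $T/\lambda_F(S^1)$ acts trivially on the $\RR_{\ge 0}$-factor (the metric is $T$-invariant) and with quotient $F$ on the $M_F$-factor (since $\lambda_F(S^1)$ already acts trivially on $M_F$), so passing to the further quotient produces $\nu_F/T\cong F\times\RR_{\ge 0}$. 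The corner structures on the two sides match, because the unique extra facet appearing on the right ($F\times\{0\}$) corresponds under this identification to the image of the zero section, which is exactly the codimension-one stratum of $\nu_F/T$ coming from $M_F$.

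For (2), restricting (1) to the open subset $M_\mrF\subseteq M_F$ gives $\nu_{\mrF}/T\cong\mrF\times\RR_{\ge 0}$. Since $\mrF$ is an open codimension-zero submanifold of $F$ and has no faces of its own, the manifold-with-corners $\mrF\times\RR_{\ge 0}$ has exactly two faces: itself and the facet $\mrF\times\{0\}\cong\mrF$. Both are homotopy equivalent to $\mrF$ (the former via linear deformation retraction onto the $\RR_{\ge 0}=0$ slice), and the inclusion $\mrF\hookrightarrow F$ is a homotopy equivalence by a standard collar-neighborhood argument for compact manifolds with corners. Composing these equivalences yields the claim.

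The main obstacle is the first stage of (1): showing that $\nu_F/\lambda_F(S^1)\cong M_F\times\RR_{\ge 0}$ is a global homeomorphism of manifolds with corners, rather than merely a fiberwise bijection over $M_F$. This is precisely where the Hermitian structure and the $U(1)$-invariance of the norm do the work of forcing the local product identifications to patch together without any residual twist, and where one must verify compatibility with the corner strata of $M_F$ lying over faces of $F$ of positive codimension.
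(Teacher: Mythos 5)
Your proposal is correct and follows essentially the same route as the paper: both perform the two-stage quotient, first collapsing the circle $H=\lambda_F(S^1)$ fixing $M_F$ and using a $T$-invariant fiber metric to identify $\nu_F/H$ with $M_F\times\RR_{\ge 0}$ equivariantly (the paper does this via a section of $S(\nu_F)/H\to M_F$ and an explicit inverse, you via the norm map in unitary trivializations), and then quotienting by the residual torus $T/H$ acting trivially on the $\RR_{\ge 0}$ factor to get $F\times\RR_{\ge 0}$. Part (2) is likewise handled the same way, by restricting over $M_{\mrF}$ to get $\mrF\times\RR_{\ge 0}$ and using that the inclusion $\mrF\subseteq F$ is a homotopy equivalence.
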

\begin{proof}
\begin{enumerate}
\item
Since $F$ is a facet of $Q=M/T$, $M_F$ is a characteristic submanifold of $M$. Hence there is a subgroup $H$ of $T$ such that $H$ fixes $M_{F}$ pointwise and $H$ is isomorphic to $S^1$. Let $\nu_F\xrightarrow{p_F}M_F$ be the normal bundle of the embedding $M_F\subset M$ and $S(\nu_F)$ is the unit circle bundle of $\nu_F$ with respect to a $T$-invariant Riemann metric $g$ on $\nu_F.$ Then the following diagram is commutative,
\begin{equation}\label{S nu F to MF and nu F to MF}
	\begin{CD}
		S(\nu_{F}) @>{p_F}>> M_{F}\\
		@V{\iota_F}VV @VV{id}V\\
		\nu_{F} @>{p_F}>> M_{F}
	\end{CD}
\end{equation}
where $\iota_F$ is the natural inclusion map,  $id$ is the identity map and all maps in this diagram are $T$-equivariant. Since $T$ acts on $M$ effectively and $H$ acts on $M_F$ trivially, $H$ acts on the fiber direction of $\nu_F$ faithfully. Therefore $H$ acts on the fiber of $S(\nu_{F})$ freely. Therefore $\overline{p_F}: S(\nu_{F})/H\to M_F$ is a $T/H$-equivariant homeomorphsim, where $\overline{p_F}$ is induced by $p_F$ in (\ref{S nu F to MF and nu F to MF}). Hence there is a $T/H$-equivariant homeomorphism $s^\prime: M_F\to S(\nu_{F})/H$ which is the inverse of $\overline{p_F}$. Note that the following diagram is commutative,
\[\begin{tikzcd}[column sep=small]
S(\nu_{F})/H \arrow{r}{\overline{\iota_F}}  \arrow[swap]{rd}{\overline{p_F}} 
& \nu_{F}/H \arrow{d}{\overline{p_F}} \\
& M_F
\end{tikzcd}
\]
where $\overline{\iota_F}$ and $\overline{p_F}$ are induced by $\iota_F$ and $p_F$ in (\ref{S nu F to MF and nu F to MF}) respectively. Let $s: M_F\to \nu_F/H$ be the composition map 
\[M_F\xrightarrow{s^\prime}S(\nu_{F})/H\xrightarrow{\overline{\iota_F}}\nu_F/H.\]
Hence for each $x\in M_F$, $s(x)\ne \overline{0},$ where $\overline{0}\in(\nu_F/H)_x=\overline{p_F}^{-1}(x)$ is the image of $0\in(\nu_F)_x=p_F^{-1}(x)$. Therefore we have the following $T/H$-equivariant map
\begin{equation}\label{nuF/H to MF times R nonnegative}
\begin{split}
\psi: \nu_F/H&\to M_F\times\RR_{\ge 0}\\
(x,\overline{v})&\mapsto(x,\frac{||\overline{v}||}{||s(x)||}),
\end{split}
\end{equation}
where $T/H$ acts on $\RR_{\ge 0}$ trivially, $||\overline{v}||:=g(v,v)$. Since $g$ is a $T$-invariant Riemann metric on $\nu_F$, $\psi$ is well-defined. Also we have the following $T/H$-equivariant map
\[
\begin{split}
\Gamma: M_F\times\RR_{\ge 0}&\to \nu_F/H\\
(x,t)&\mapsto(x,ts(x)).
\end{split}
\]
We check directly that $\psi=\Gamma^{-1}.$ Hence $M_F\times\RR_{\ge 0}$ and $\nu_F/H$ have the same orbit type. Therefore $\nu_F/T=(\nu_F/H)/(T/H)$ is homeomorphic to \red{$(M_F\times\RR_{\ge 0})/(T/H)=\Big(M_F/(T/H)\Big)\times \RR_{\ge 0}=F\times\RR_{\ge 0}$} as nice manifolds with corners.
\item The above argument imply that $\nu_{\mrF}/T$ is homeomorphic to $(M_{\mrF}\times\RR_{\ge 0})/(T/H)=M_{\mrF}/(T/H)\times \RR_{\ge 0}=\mrF\times\RR_{\ge 0}$ as nice manifolds with corners. Therefore each face of $\nu_{\mrF}/T$ is homotopy equivalent to $\mrF$. Then this lemma follows from the fact that $\mrF$ is homotopy equivalent to $F$ through the inclusion map $\mrF\subseteq F.$
\end{enumerate}
\end{proof}

\begin{corollary}\label{local model for faces}
Let $F$ be a codimension $k$ face of $Q=M/T$, then 
\begin{enumerate}
\item
$\nu_F/T$ is homeomorphic to $F\times\mathbb{R}^k_{\ge 0}$ as manifolds with corners.
\item each face $E$ of $\nu_{\mrF}/T$ is homeomorphic to $\mrF \times \mathbb{R}^{i}_{\ge 0}$, where $i=\text{dim}(E)-\text{dim}(F)$ and $\mathbb{R}^{i}_{\ge 0}$ is a dimension $i$ face of $\mathbb{R}^{k}_{\ge 0}$.
\end{enumerate}
\end{corollary}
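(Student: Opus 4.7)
The plan is to reduce to the codimension-one case established in Lemma \ref{local model for facet} by splitting the normal bundle $\nu_F$ equivariantly into a sum of $k$ complex line bundles, one for each facet containing $F$. First I would use the fact that $Q$ is nice to write $F$ as a connected component of $F_{i_1}\cap\cdots\cap F_{i_k}$ for exactly $k$ facets, and correspondingly $M_F$ as a connected component of $M_{F_{i_1}}\cap\cdots\cap M_{F_{i_k}}$. Since the $T$-action is locally standard, around any point of $M_F$ there is a local chart $\mathbb{C}^n$ with the standard $T^n$-action in which the characteristic submanifolds $M_{F_{i_j}}$ through that point correspond to the coordinate hyperplanes $\{z_j=0\}$, and in particular they meet transversally. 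Globalizing this local picture along $M_F$ yields a $T$-equivariant splitting
\[
\nu_F\cong\nu_{i_1}|_{M_F}\oplus\cdots\oplus\nu_{i_k}|_{M_F},
\]
together with a product decomposition $T_F=H_{i_1}\times\cdots\times H_{i_k}$ of the isotropy group of every point of $M_F$ into circles, such that $H_{i_j}$ acts on $\nu_{i_j}|_{M_F}$ by standard rotation and trivially on $\nu_{i_\ell}|_{M_F}$ for $\ell\ne j$.

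Next I would apply the construction of Lemma \ref{local model for facet} to each summand in parallel. Choosing a $T$-invariant Riemannian metric on $\nu_F$ that respects the splitting and, for each $j$, a nowhere-zero section $s_j$ of $(\nu_{i_j}|_{M_F})/H_{i_j}$ exactly as in the facet case, I would define the $T/T_F$-equivariant map
\[
\psi:\nu_F/T_F\to M_F\times\mathbb{R}_{\ge 0}^k,\qquad (x,\overline{v}_1+\cdots+\overline{v}_k)\mapsto\Bigl(x,\tfrac{\|\overline{v}_1\|}{\|s_1(x)\|},\ldots,\tfrac{\|\overline{v}_k\|}{\|s_k(x)\|}\Bigr),
\]
where $T/T_F$ acts trivially on $\mathbb{R}_{\ge 0}^k$. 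The same argument used in Lemma \ref{local model for facet} shows $\psi$ is a homeomorphism with inverse $(x,t_1,\dots,t_k)\mapsto(x,t_1 s_1(x)+\cdots+t_k s_k(x))$. Taking the further quotient by $T/T_F$ identifies $\nu_F/T=(\nu_F/T_F)/(T/T_F)$ with $\bigl(M_F/(T/T_F)\bigr)\times\mathbb{R}_{\ge 0}^k=F\times\mathbb{R}_{\ge 0}^k$ as nice manifolds with corners, establishing part~(1).

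For part~(2), restricting the homeomorphism of part~(1) above $M_{\mrF}\subseteq M_F$ yields $\nu_{\mrF}/T\cong\mrF\times\mathbb{R}_{\ge 0}^k$ as nice manifolds with corners. The faces of a product of nice manifolds with corners are products of faces of the factors, and since $\mrF$ is an open face (so has no proper subfaces), every face $E$ of $\nu_{\mrF}/T$ is of the form $\mrF\times R$ for some face $R$ of $\mathbb{R}_{\ge 0}^k$. Comparing dimensions gives $\text{dim}(R)=\text{dim}(E)-\text{dim}(\mrF)=\text{dim}(E)-\text{dim}(F)$, which is exactly the claimed description with $i=\text{dim}(E)-\text{dim}(F)$.

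The step I expect to be the main obstacle is producing the equivariant splitting of $\nu_F$ and the product structure $T_F=H_{i_1}\times\cdots\times H_{i_k}$ in a globally coherent way: this is where local standardness is genuinely essential. Concretely, one has to verify that the distinct circles $H_{i_j}$ acting on the transverse characteristic submanifolds $M_{F_{i_j}}\supseteq M_F$ are linearly independent in $T$ (so generate a $k$-torus), and that their weights on the summands of $\nu_F|_x$ agree at all points $x\in M_F$ with the standard model; once this is secured, the remainder of the argument is a direct iteration of the facet case carried out $k$ times, one summand at a time.
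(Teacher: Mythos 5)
Your proposal is correct and follows essentially the same route as the paper: both split $\nu_F$ $T$-equivariantly as $\bigoplus_{j=1}^k \nu_{F_{i_j}}|_{M_F}$ using transversality from local standardness, use $T_F=T_{i_1}\times\cdots\times T_{i_k}$, reduce to the facet case to identify $\nu_F/T_F$ with $M_F\times\RR_{\ge 0}^k$ carrying a trivial $T/T_F$-action on the cone factor, and then quotient to get $F\times\RR^k_{\ge 0}$, with part (2) following from the product face structure over $\mrF$. The only cosmetic difference is that you build the trivializing homeomorphism directly from sections of each summand, while the paper routes the same construction through an equivariant bundle map into $\nu_{F_{i_1}}\times\cdots\times\nu_{F_{i_k}}$ over the diagonal.
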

\begin{proof}
\begin{enumerate}
\item Since $F$ is a codimension $k$ face of $Q$, $F$ is a connected component of $F_{i_1}\cap F_{i_2}\cap\cdots\cap F_{i_k},$ where $F_{i_j}$'s for $1\le j\le k$ are \red{facets} of $Q$. Hence $M_F=q^{-1}(F)$ is a connected component of $M_{F_{i_1}}\cap M_{F_{i_2}}\cap\cdots\cap M_{F_{i_k}}$ where $M_{F_{i_j}}=q^{-1}(F_{i_j}).$ Let $g$ be a $T$-invariant Riemann metric on $M$. Then $TM|_{M_{F_{i_j}}}\cong TM{_{F_{i_j}}}\oplus\nu_{F_{i_j}}$ as $T$-invariant vector bundles and $\nu_{F_{i_j}}\cong (TM{_{F_{i_j}}})^\bot$ as $T$-invariant vector  bundles. Since for each $x\in M_F$, $M_{F_{i_1}},\dots, M_{F_{i_k}}$ intersects at $x$ transversely, we have 
\begin{equation}\label{split of nu F}
\nu_F\cong\oplus_{j=1}^k\nu_{F_{i_j}}|_{M_F}
\end{equation}
as $T$-invariant vector bundles. \red{In addition we have the following equivariant bundle map}
\begin{equation}\label{nu F to nu Fi1 times dots nu Fik}
	\begin{CD}
		\nu_{F} @>>> \nu_{F_{i_1}} \times \cdots \times \nu_{F_{i_k}}\\
		@V{p_{_F}}VV @VV{p_{_{F(i_1,\dots,i_k)}}}V\\
		M_{F} @>{\triangle}>> M_{F_{i_1}}\times \cdots \times M_{F_{i_k}},
	\end{CD}
\end{equation}
where $\triangle(x)=(x,\cdots,x)\in M_{F_{i_1}}\times \cdots\times M_{F_{i_k}}$ for any $x \in M_{F}$ and $p_{_{F(i_1,\dots,i_k)}}=p_{_{F_{i_1}}}\times\cdots \times p_{_{F_{i_k}}}.$
Let $T_{F_{i_j}}$ be the subgroup of $T$ such that $T_{F_{i_j}}$ fixes $M_{F_{i_j}}$ pointwise and $T_{F_{i_j}}\cong S^1.$ Hence $M_F$ is a connected component of $M^{T_F}$, where $T_F=T_{i_1}\times\cdots\times T_{i_k}$ and $M^{T_F}=\{x\in M: t\cdot x=x, \forall t\in T_F\}.$
 Therefore (\ref{nu F to nu Fi1 times dots nu Fik}) induces the following commutative diagram.
\begin{equation}
	\begin{CD}
		\nu_{F}/T_F @>>> \nu_{F_{i_1}}/T_{i_1} \times \cdots \times \nu_{F_{i_k}}/T_{i_k}\\
		@V{\overline{p_{_F}}}VV @VV{\overline{p_{_{F(i_1,\dots,i_k)}}}}V\\
		M_{F} @>{\triangle}>> M_{F_{i_1}}\times \cdots \times M_{F_{i_k}},
	\end{CD}
\end{equation}
By (\ref{nuF/H to MF times R nonnegative}) in the proof of Lemma \ref{local model for facet}, we can identify $\overline{p_{_{F_{i_j}}}}: \nu_{F_{i_j}}/T_{i_j}\to M_{F_{i_j}}$ with the \red{first projection} $M_{F_{i_j}}\times\RR_{\ge 0}\to M_{F_{i_j}}.$ Hence we can identify $\overline{p_{_F}}:\nu_F/T_F\to M_F$ with the \red{first projection}
$M_F\times\RR^k_{\ge 0}\to M_F.$ In particular, We have $\nu_{F}/T_F$ is equivariantly homeomorphic to $M_F \times \mathbb{R}_{\geq 0}^k$ as \red{$T/T_F$- manifolds}, where $T/T_F$ acts on $R_{\ge 0}^k$ trivially. Hence \red{$M_F\times\RR_{\ge 0}^k$} and \red{$\nu_F/T_F$} have the same orbit type. Therefore $\nu_F/T=(\nu_F/T_F)/(T/T_F)$ is homeomorphic to \red{$(M_F\times\RR^k_{\ge 0})/(T/T_F)=\Big(M_F/(T/T_F)\Big)\times \RR^k_{\ge 0}=F\times\RR^k_{\ge 0}$} as nice manifolds with corners. 
\item The above argument imply that $\nu_{\mrF}/T$ is homeomorphic to $(M_{\mrF}\times\RR^k_{\ge 0})/(T/H)=M_{\mrF}/(T/H)\times \RR^k_{\ge 0}=\mrF\times\RR^k_{\ge 0}$ as nice manifolds with corners. By the face structure of $\mrF\times\RR^k_{\ge 0}$, the conclusion holds. 

\end{enumerate}
\end{proof}

\section{Topological face rings and equivariant Thom classes}\label{An explicit formula for H*T(MmrF)}
\red{In Section \ref{topological face ring} we expressed} Yu's topological face ring in terms of orbit spaces and polynomials, but it is not obvious to see the meaning of the variables $x_1,\dots, x_m$. In this section we relate $x_i$ to equivariant Thom classes 
$\tau_i$ which is defined at the end of Section \ref{basic definition}. As a conclusion of this section, we show that the topological face ring $\mathbf{k}[M/T]$ identifies with a subring of $\bigoplus\limits_{F\in\mcF}H_T^*(M_{\mrF})$, where $\mcF$ is the set of faces of $Q$.%

\vspace{3mm}
\subsection{On $H^*_{T_F}(p)$}\label{subsection: On HTF(p)}
Let $\{F_1,\cdots, F_m\}$ be the set of \red{facets} of $Q=M/T$ and $F$ be a codimension $k$ face of $Q$. Since $Q$ is a nice manifold with corners, $F$ is a connected component of $F_{i_1}\cap\cdots\cap F_{i_k},$ where $F_{i_j}$'s are facets of $Q$. Let $T_{i}$ be the subgroup of $T$ such that $T_{i}$ fixes $M_{F_i}$ \red{pointwise} and $T_{F_{i}}\cong S^1$, where $M_{F_i}=q^{-1}(F_i)$. Let $T_F:=T_{i_1}\times\cdots\times T_{i_k}.$ Since $T$ acs on $M$ locally standard, for any $p\in M_{\mrF}$ the isotropy subgroup of $p$ is $T_F$. We denote $\tau_i|_{M_{\mrF}}$ by $\tau_i(M_\mrF)$, which is an element of  $H^2_T(M_{\mrF})$, for $i=1,2\dots,m.$ For $p\in M_{\mrF}$, we denote $\text{Res}_{T_F}(\tau_i)|_p$ by $\Big(\text{Res}_{T_F}(\tau_i)\Big)(p)$  where $\text{Res}_{T_F}$ denotes the restriction map from $H^*_T(X)$ to $H^*_{T_F}(X)$ for any topological sapce $X$ with a $T$-action. We  identify $H^2_{T_F}(p)=H^2(BT_F)$ with $\mathrm{Hom}(T_F, S^1)$. Since $T_F=T_{{i_1}}\times\cdots\times T_{{i_k}}$, $\mathrm{Hom}(T_F, S^1)\cong \mathrm{Hom}(T_{{i_1}},S^1)\oplus\cdots\oplus \mathrm{Hom}(T_{{i_k}},S^1)$ which can be identified with $H^2(BT_{{i_1}})\oplus\cdots\oplus H^2(BT_{{i_k}}).$ By (\ref{split of nu F}), we have
\red{\begin{equation}\label{vector bundle decomposition}
\nu_F\cong\oplus_{j=1}^k(\nu_{F_{i_j}}|_{M_F})
\end{equation}}
It is well-known that the $T_{{i_j}}$-equivariant  Euler class of $\nu_{F_{i_j}}|_p$ is $\Big(\text{Res}_{T_{i_j}}(\tau_{i_j})\Big)(p).$ \red{Since $T$ acts on $M$ effectively, $\Big(\text{Res}_{T_{i_j}}(\tau_{i_j})\Big)(p)$ is a $\bf{k}$-basis of $H_{T_{{i_j}}}^2(p)$.} This implies that 
\[\Big(\text{Res}_{T_{i_1}}(\tau_{i_1})\Big)(p),\cdots,\Big(\text{Res}_{T_{i_k}}(\tau_{i_k})\Big)(p)\] is a $\mathbf{k}$-basis of $H^2_{T_F}(p).$ Since $H^*_{T_F}(p)$ is a polynomial ring generated by degree $H^2_{T_F}(p)$, we have
\[H^*_{T_F}(p)=\mathbf{k}\bigg[\Big(\text{Res}_{T_{i_1}}(\tau_{i_1})\Big)(p),\dots,\Big(\text{Res}_{T_{i_k}}(\tau_{i_k})\Big)(p)\bigg].\]

\vspace{3mm}
\subsection{A homotopy equivalence between $ET\times_T M_{\mrF}$ and $(BT_F\times\{p\})\times F$}\label{a homotopy equivalent between ET times T M mrF to}
Since $T$ acts on $M$ \red{locally standard}, the following exact sequence is splitting.
\[1\to T_F\to T\to T/T_F\to 1,\]
where $T_F\to T$ is the inclusion map and $T\to T/T_F$ is the quotient map. Hence we have the following isomorphism:
\[f: T\cong T_F\times T/T_F.\]
Note that for any $p\in M_{\mrF}$ the isotropy subgroup of $p$ is $T_F$, so $T/T_F$ acts freely on $M_{\mrF}.$ Therefore, $ET\times_T M_{\mrF}$ is homotopy equivalent to $BT_F\times \mrF.$ In the following we explain this homotopy equivalent process. Let $p\in M_{\mrF}$. Then the map
\[\begin{split}
\sigma_{p}:M_{\mrF}&\to\{p\}\times M_{\mrF}\\
x&\mapsto (p,x)
\end{split}\]
is a $T$-equivariant diffeomorphism, where $T$ acts on $\{p\}\times M_{\mrF}$ through the above isomorphism $T\xrightarrow{f}T_F\times T/T_F.$
Therefore we have
\[\begin{split}
ET\times_T M_{\mrF}&\sim ET\times_T(\{p\}\times M_{\mrF})\\
&\sim(ET_F\times_{T_F}\times\{p\})\times(E(T/T_F)\times_{T/T_F}M_{\mrF})\\
&\sim (BT_F\times\{p\})\times\mrF\\
&\sim (BT_F\times\{p\})\times F
\end{split}\]
and we denote this homotopy equivalence by \red{$\psi_p: ET\times_T M_{\mrF} \xrightarrow{\hspace{2mm}\sim\hspace{2mm}} (BT_F\times\{p\})\times F.$}
From the above homotopy equivalent process, we can obtain following commutative diagram.
\begin{equation}\label{Psi F,p star and qF}
	\begin{CD}
		ET\times_T M_{\mrF} @>{\psi_{p}}>> (BT_F\times\{p\})\times F\\
		@V{\overline{q}}VV @VV{p_2}V\\
		F @>{\text{identity~map}}>> F
	\end{CD}
\end{equation}
where $\overline{q}$ is induced by the composition map \red{$ET\times_TM_\mrF\to\mrF \to F$} from the second projection and inclusion maps and $p_2$ is the second projection map. Hence for any $\alpha\in H^*(F),$ we have \begin{equation}\label{psi inverse widetilde qf}
(\psi_{p}^*)^{-1}\overline{q}^*(\alpha)=p_2^*(\alpha).
\end{equation}
\begin{lemma}\label{psi inverse tau ij F} 
Let $F$ be a codimension $k$ face of $Q=M/T$ and $F$ is a connected component of $F_{i_1}\cap\cdots\cap F_{i_k},$ where $F_{i_j}$'s are facets of $Q$.
For any $1\le j\le k$, we have
\[\Big((\psi_{p}^*)^{-1}(\tau_{i_j}(M_\mrF)\Big)=p_1^*\bigg(\Big(\text{Res}_{T_{i_j}}(\tau_{i_j})\Big)(p)\bigg)+p_2^*(\alpha_j),\]
where $p_1: (BT_F\times\{p\})\times F\to (BT_F\times\{p\})$ is the first projection and $\alpha_j\in H^2(F).$
\end{lemma}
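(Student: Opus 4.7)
The plan is to combine the K\"unneth theorem with a careful analysis of the homotopy equivalence $\psi_p$ restricted to a single fiber, together with the normal-bundle decomposition recorded in (\ref{vector bundle decomposition}).

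First I would apply the K\"unneth theorem to the product $(BT_F\times\{p\})\times F$. Since $H^*(BT_F;\mathbf{k})$ is a polynomial ring on degree-two generators by Subsection~\ref{subsection: On HTF(p)}, it is free in each degree and in particular $H^1(BT_F)=0$, so
\[
H^2\bigl((BT_F\times\{p\})\times F\bigr)\cong p_1^*H^2(BT_F)\oplus p_2^*H^2(F).
\]
Consequently there exist unique elements $\beta_j\in H^2(BT_F)$ and $\alpha_j\in H^2(F)$ with
\[
(\psi_p^*)^{-1}\bigl(\tau_{i_j}(M_{\mrF})\bigr)=p_1^*(\beta_j)+p_2^*(\alpha_j),
\]
and only the identification of $\beta_j$ remains.

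Next I would determine $\beta_j$ by pulling back to a single fiber. Let $\iota:BT_F\times\{p\}\hookrightarrow(BT_F\times\{p\})\times F$ be the inclusion $x\mapsto(x,q(p))$, where $q:M\to M/T$ is the quotient map. Applying $\iota^*$ kills the $p_2^*$ summand (the composition $p_2\circ\iota$ is constant) and is the identity on the $p_1^*$ summand, so $\beta_j=\iota^*(\psi_p^*)^{-1}\bigl(\tau_{i_j}(M_{\mrF})\bigr)$. Unwinding the definition of $\psi_p$ from Subsection~\ref{a homotopy equivalent between ET times T M mrF to}, and using that the isotropy of every point of $M_{\mrF}$ is exactly $T_F$ so that $T_F$ acts trivially on $M_{\mrF}$, one checks that $\psi_p^{-1}\circ\iota$ is homotopic to the canonical map $BT_F=ET\times_T(T\cdot p)\to ET\times_T M_{\mrF}$ induced by the orbit inclusion $T\cdot p\subseteq M_{\mrF}$. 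Therefore $\beta_j$ is exactly the restriction of the equivariant class $\tau_{i_j}(M_{\mrF})$ to the Borel fiber over $p$, which by definition equals $\bigl(\mathrm{Res}_{T_F}(\tau_{i_j})\bigr)(p)\in H^2_{T_F}(p)=H^2(BT_F)$. Finally, identifying this with $\bigl(\mathrm{Res}_{T_{i_j}}(\tau_{i_j})\bigr)(p)$ under the splitting $H^2(BT_F)=\bigoplus_{l=1}^k H^2(BT_{i_l})$ from Subsection~\ref{subsection: On HTF(p)} is where (\ref{vector bundle decomposition}) enters: by local standardness, $T_{i_l}$ for $l\neq j$ acts trivially on the fibers of the line bundle $\nu_{F_{i_j}}|_{M_F}$ whose equivariant Thom class is $\tau_{i_j}$, so the $T_{i_l}$-components of $\mathrm{Res}_{T_F}(\tau_{i_j})(p)$ vanish and the remaining $T_{i_j}$-component is $\mathrm{Res}_{T_{i_j}}(\tau_{i_j})(p)$.

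The main obstacle will be the middle step: rigorously justifying that $\psi_p^{-1}\circ\iota$ is, up to homotopy, the Borel-construction map induced by the orbit inclusion at $p$. This requires tracking through the chain of homotopy equivalences
\[
ET\times_T M_{\mrF}\sim ET\times_T(\{p\}\times M_{\mrF})\sim(ET_F\times_{T_F}\{p\})\times(E(T/T_F)\times_{T/T_F}M_{\mrF})\sim BT_F\times F
\]
while keeping track of which factor the inclusion $\iota$ picks out, and exploiting that $T_F$ acts trivially on $M_{\mrF}$ so that the orbit $T\cdot p\cong T/T_F$ projects to the single point $q(p)$ in the $F$-factor; once this is in hand, both the identification of $\beta_j$ and the automatic appearance of the correcting term $p_2^*(\alpha_j)\in p_2^*H^2(F)$ follow formally.
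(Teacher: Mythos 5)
Your proposal is correct and follows essentially the same route as the paper: a K\"unneth decomposition of $H^2\big((BT_F\times\{p\})\times F\big)$, followed by pinning down the $BT_F$-component by restricting to the Borel fiber over $p$, where the restriction of $\tau_{i_j}$ is the equivariant Euler class of $\nu_{F_{i_j}}|_p$, i.e.\ $\big(\mathrm{Res}_{T_{i_j}}(\tau_{i_j})\big)(p)$. The ``main obstacle'' you flag --- that $\psi_p^{-1}\circ\iota$ is, up to homotopy, the Borel map induced by the orbit inclusion at $p$ --- is precisely what the paper's commutative diagram of Borel constructions ($ET_F\times_{T_F}\{p\}\to ET\times_T M_{\mathring{F}}$ versus $p_1\circ\psi_p$) establishes, the only cosmetic differences being that the paper kills the $p_2^*$-term by a nilpotence argument rather than by restriction to a point, and that it asserts rather than proves (as you do via the trivial $T_{i_l}$-actions for $l\neq j$) the identification of the $T_F$-restriction of $\tau_{i_j}$ at $p$ with its $T_{i_j}$-component.
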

\begin{proof}
By K\"unneth Theorem we have the ring isomorphism
\red{\[\begin{split}
H^*(F)\bigotimes_{\mathbf{k}}\mathbf{k}\bigg[\Big(\text{Res}_{T_{i_1}}(\tau_{i_1})\Big)(p),\dots,\Big(\text{Res}_{T_{i_k}}(\tau_{i_k})\Big)(p)\bigg]&\to H^*\Big((BT_F\times\{p\})\times F\Big)
\end{split}\]}
Hence 
\begin{equation}\label{psi inverse tau ij F expression}
(\psi_{p}^*)^{-1}\Big(\tau_{i_j}(M_\mrF)\Big)=\sum_{l=1}^k\lambda_{jl}p_1^*\bigg(\Big(\text{Res}_{T_{i_l}}(\tau_{i_l})\Big)(p)\bigg)+p_2^*(\alpha_j)
\end{equation}
for some $\alpha_j\in H^2(F),$ where $\lambda_{jl}\in\mathbf{k}$, $p_1^* $ and $p_2^*$ are induced by projections. For proving this lemma, it suffices to show that 
\[\lambda_{jl}=\delta_{jl}.\]
For any $\alpha\in H^*(F)$ we have $\Big(\text{Res}_{T_F}(\psi_{p}^*p_2^*(\alpha))\Big)\Big|_{BT_F\times\{p\}}=0$.  Indeed, for sufficiently large $m$ we have $\alpha^m=0$ but $H^*(BT_F\times\{p\})$ has no nonzero nilpotent elements. Hence (\ref{psi inverse tau ij F expression}) implies that 
\begin{equation}\label{r_p action on both sides}
\text{Res}_{T_F}\Big(\tau_{i_j}(M_\mrF)\Big)=\sum_{l=1}^k\lambda_{jl}\cdot\text{Res}_{T_F}\Bigg(\psi_{p}^*p_1^*\bigg(\Big(\text{Res}_{T_{i_l}}(\tau_{i_l})\Big)(p)\bigg)\Bigg).
\end{equation}
Note that following diagram is commutative
\begin{center}
\begin{tikzcd}
 ET_F\times_{T_F}\{p\} \arrow[d, "\text{identity~map}"] \arrow[r, "{}"]  & ET_F\times_{T_F}M_{\mrF} \arrow[r, "{}"]  & ET\times_{T}M_{\mrF} \arrow[d, "\psi_{p}
 "]\\
 BT_F\times\{p\}  & & (BT_F\times\{p\}) \times F\arrow[ll, "p_1"]
\end{tikzcd}
\end{center}
where $ET_F\times_{T_F}\{p\}\to ET_F\times_{T_F}M_{\mrF}$ is given by the $T_F$-equivariant inclusion map $p\to M_{\mrF}$, $ET_F\times_{T_F}M_{\mrF}\to ET\times_{T}M_{\mrF}$ is determined by $ET_F\to ET$ and $(BT_F\times\{p\}) \times F\xrightarrow{p_1}  BT_F\times\{p\}$ is the first projection map, so we have the following commutative diagram.
\[\begin{tikzcd}
 H^*(ET_F\times_{T_F}\{p\})  &H^*(ET_F\times_{T_F}M_{\mrF}) \arrow[l, "{}"]    & H^*(ET\times_{T}M_{\mrF}) \arrow[l, "{}"]\\
 H^*(BT_F\times\{p\})\arrow[u, "\text{identity~map}"]\arrow[rr, "p_1^*"]  & & H^*((BT_F\times\{p\}) \times F)\arrow[u, "\psi_{p}^*
 "]
\end{tikzcd}\]
This show that for $\Big(\text{Res}_{T_{i_l}}(\tau_{i_l})\Big)(p)\in H^2(BT_F\times\{p\})$, we have 
\[\Bigg(\text{Res}_{T_F}\Bigg(\psi_{p}^*p_1^*\bigg(\Big(\text{Res}_{T_{i_l}}(\tau_{i_l})\Big)(p)\bigg)\Bigg)\Bigg)\Big|_{BT_F\times\{p\}}=\Big(\text{Res}_{T_{i_l}}(\tau_{i_l})\Big)(p).\]
Hence restriction both sides of (\ref{r_p action on both sides}) on $BT_F\times\{p\}$, we have
\[\bigg(\text{Res}_{T_F}\Big(\tau_{i_j}(M_\mrF)\Big)\bigg)\Big|_{BT_F\times\{p\}}=\sum_{l=1}^k\lambda_{jl}\Big(\text{Res}_{T_{i_l}}(\tau_{i_l})\Big)(p).\]
Since $\bigg(\text{Res}_{T_F}\Big(\tau_{i_j}(M_\mrF)\Big)\bigg)\Big|_{BT_F\times\{p\}}=\Big(\text{Res}_{T_{i_j}}(\tau_{i_j})\Big)(p)$ and $\Big(\text{Res}_{T_{i_1}}(\tau_{i_1})\Big)(p),\cdots, \Big(\text{Res}_{T_{i_k}}(\tau_{i_k})\Big)(p)$ is a basis of $H^2(BT_F\times\{p\})$, we obtain that 
\[\lambda_{jl}=\delta_{jl}.\]
This complete the proof of the lemma.
\end{proof}

\vspace{2mm}
\red{\begin{rema}
$\alpha_j$ in Lemma \ref{psi inverse tau ij F} is related to Yoshida's construction \cite{yo-11} for torus manifolds with locally standard action. 
\end{rema}}
\vspace{2mm}
\begin{proposition}\label{HF[x] to HTmrF}
The ring homomorphism
\[\begin{split}
\rho_F: H^*(F)[\mathbf{x}_F]&\to H^*_T(M_{\mrF})\\
\sum_{\mathbf{m}}\alpha(\mathbf{m})(\mathbf{x}_F)^{\mathbf{m}}&\mapsto\sum_{\mathbf{m}}\overline{q}^*(\alpha(\mathbf{m}))\Big(\tau(M_\mrF)\Big)^{\mathbf{m}},
\end{split}\]
is an isomorphism.
Here $\mathbf{m}=(m_1,\dots, m_k)\in\mathbb{Z}_{\ge 0}^k$, $\alpha(\mathbf{m})\in H^*(F)$, $(\mathbf{x}_F)^{\mathbf{m}}=x_{i_1}^{m_1}\dots x_{i_k}^{m_k}$,
$\Big(\tau(M_\mrF)\Big)^{\mathbf{m}}=\Big(\tau_{i_1}(M_\mrF)\Big)^{m_1}\cdots \Big(\tau_{i_k}(M_\mrF)\Big)^{m_k}$ and $\overline{q}^*$ is induced by the composition map $\overline{q}: ET\times_TM_\mrF\to \mrF \to F.$ 
\end{proposition}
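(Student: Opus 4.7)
The strategy is to reduce everything via the homotopy equivalence $\psi_p$ from Subsection~\ref{a homotopy equivalent between ET times T M mrF to} and the K\"unneth theorem, so that $\rho_F$ becomes an explicit polynomial change of variables, and then observe that this change of variables is upper triangular and hence invertible.

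More precisely, I would first use $\psi_p^*$ together with the K\"unneth isomorphism
\[
H^*\bigl((BT_F\times\{p\})\times F\bigr)\;\cong\;H^*(F)\otimes_{\mathbf{k}} H^*(BT_F)
\;=\;H^*(F)\otimes_{\mathbf{k}}\mathbf{k}[y_{i_1},\dots,y_{i_k}],
\]
where I abbreviate $y_{i_j}:=\bigl(\mathrm{Res}_{T_{i_j}}(\tau_{i_j})\bigr)(p)$; the polynomial ring identification on the right is exactly what was recorded at the end of Subsection~\ref{subsection: On HTF(p)}. The plan is to analyze the composite
\[
\Phi_F\;:=\;(\psi_p^*)^{-1}\circ\rho_F\;:\;H^*(F)[\mathbf{x}_F]\longrightarrow H^*(F)\otimes_{\mathbf{k}}\mathbf{k}[y_{i_1},\dots,y_{i_k}]
\]
and show that it is a ring isomorphism; since $\psi_p^*$ is already an isomorphism, this forces $\rho_F$ to be one.

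To describe $\Phi_F$ explicitly, I would combine the two facts already established. Equation~(\ref{psi inverse widetilde qf}) gives $(\psi_p^*)^{-1}\overline{q}^*(\alpha)=p_2^*(\alpha)$ for $\alpha\in H^*(F)$, so under the K\"unneth identification $\Phi_F$ restricts to the inclusion $\alpha\mapsto \alpha\otimes 1$ on $H^*(F)$. Lemma~\ref{psi inverse tau ij F} gives, for each $j$,
\[
\Phi_F(x_{i_j})\;=\;(\psi_p^*)^{-1}\bigl(\tau_{i_j}(M_{\mrF})\bigr)\;=\;1\otimes y_{i_j}\;+\;\alpha_j\otimes 1,
\qquad \alpha_j\in H^2(F).
\]
Thus $\Phi_F$ is a morphism of $H^*(F)$-algebras that sends the polynomial generators $x_{i_j}$ to generators of the form $y_{i_j}+\alpha_j$, i.e.\ a \emph{translation} of the variables by elements of the coefficient ring $H^*(F)$.

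The final step, which is essentially formal, is to observe that such a translation is always invertible: the $H^*(F)$-algebra map
\[
H^*(F)\otimes_{\mathbf{k}}\mathbf{k}[y_{i_1},\dots,y_{i_k}]\longrightarrow H^*(F)[\mathbf{x}_F],\qquad y_{i_j}\longmapsto x_{i_j}-\alpha_j,
\]
is a two-sided inverse of $\Phi_F$. Hence $\Phi_F$ is an isomorphism and so is $\rho_F$. The only delicate point to be careful about is bookkeeping: one must verify that $H^*(F)[\mathbf{x}_F]=H^*(F)\otimes_{\mathbf{k}}\mathbf{k}[x_{i_1},\dots,x_{i_k}]$ really is a free polynomial ring over $H^*(F)$ (so that the substitution homomorphism is well-defined), and that the degrees match up with the grading convention $\deg x_{i_j}=2$ fixed in Section~\ref{topological face ring}. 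Both are immediate from the definitions, so no serious obstacle is expected; the main conceptual content is Lemma~\ref{psi inverse tau ij F}, which was the nontrivial input already supplied.
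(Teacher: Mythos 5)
Your proposal is correct and takes essentially the same route as the paper: both reduce the problem via the homotopy equivalence $\psi_p$ and the K\"unneth theorem and then feed in (\ref{psi inverse widetilde qf}) and Lemma \ref{psi inverse tau ij F}. The only difference is packaging --- the paper checks injectivity and surjectivity of $(\psi_p^*)^{-1}\rho_F$ separately, while you exhibit the explicit inverse $y_{i_j}\mapsto x_{i_j}-\alpha_j$ (legitimate since each $\alpha_j$ has degree $2$ and is hence central), which if anything makes the paper's terse injectivity step more transparent.
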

\begin{proof}
Since $\psi_p: ET\times_TM_\mrF\to (BT_F\times\{p\})\times F$ is a homotopy equivalent, it is enough to show that $(\psi_p^*)^{-1}\rho_F$ is an isomorphism.
First we show that $(\psi_p^*)^{-1}\rho_F$ is an injection. If \[(\psi_p^*)^{-1}\rho_F\Big(\sum_{\mathbf{m}}\alpha(\mathbf{m})(\mathbf{x}_F)^{\mathbf{m}}\Big)
=0\]
then 
\[\sum_{\mathbf{m}}(\psi_{p}^*)^{-1}\bigg(\overline{q}^*(\alpha\Big(\mathbf{m})\Big)\bigg)\cup(\psi_{p}^*)^{-1}\bigg(\Big(\tau(M_\mrF)\Big)^{\mathbf{m}}\bigg)=0.\]
Therefore by (\ref{psi inverse widetilde qf}) and Lemma \ref{psi inverse tau ij F} , we have 
\[p_2^*\Big({\alpha(\mathbf{m})}\Big)\cup p_1^*\bigg(\Big(\text{Res}_{T_{i_1}}(\tau_{i_1})\Big)(p)+p_2^*(\alpha_1)\bigg)^{m_1}\cup\cdots\cup p_1^*\bigg(\Big(\text{Res}_{T_{i_k}}(\tau_{i_k})\Big)(p)+p_2^*(\alpha_k)\bigg)^{m_k}=0\]
where $\alpha_1,\dots,\alpha_k\in H^2(F)$ are determined by Lemma \ref{psi inverse tau ij F} .
This implies that for any $\mathbf{m}\in \Z_{\ge 0}^k$ we have 
\[\alpha(\mathbf{m})=0.\]
Hence 
\[\sum\limits_{\mathbf{m}}\alpha(\mathbf{m})(\mathbf{x}_F)^{\mathbf{m}}=0.\] This means that $\rho_F$ is injective. 

Next we show that $(\psi_p^*)^{-1}\rho_F$ is surjective. Since for any $\alpha\in H^2(F)$ we have\[
(\psi_{p}^*)^{-1}\rho_F(\alpha)=p_2^*(\alpha)
\] which follows from (\ref{psi inverse widetilde qf}).
Since
\[(\psi_{p}^*)^{-1}\rho_F(x_{i_j})=(\psi_{p}^*)^{-1}\Big(\tau_{i_j}(M_\mrF)\Big)=p_1^*\bigg(\Big(\text{Res}_{T_{i_j}}(\tau_{i_j})\Big)(p)\bigg)+p_2^*(\alpha_j)\]
where the second equality follows from Lemma \ref{psi inverse tau ij F}, we have 
\[(\psi_{p}^*)^{-1}\rho_F(x_{i_j}-
\alpha_j)=p_1^*\bigg(\Big(\text{Res}_{T_{i_j}}(\tau_{i_j})\Big)(p)\bigg).\]
Therefore the surjection of
\[(\psi_{p}^*)^{-1}\rho_F: H^*(F)[\mathbf{x}_F]\to H^*\Big((BT_F\times\{p\})\times F\Big)\] follows from K\"unneth Theorem

Since $(\psi_{p}^*)^{-1}\rho_F$ is both injective and surjective, it is an isomorphism. This implies that $\rho_F$ is an isomorphism.
\end{proof} 

\vspace{2mm}
The following is a direct corollary of the above proposition.
\begin{corollary}
Let $\mcF$ be the set of faces of $M/T$, then the ring homomorphism 
\[\bigoplus\limits_{F\in\mcF}\rho_F: \bigoplus\limits_{F\in\mcF} H^*(F)[\mathbf{x}_F]\to \bigoplus\limits_{F\in\mcF}H_T^*(M_{\mrF})
\]
is an isomorphism. In particular we can identify the topological face ring $\mathbf{k}[M/T]$ with the subring $(\bigoplus\limits_{F\in\mcF}\rho_F)\Big(\mathbf{k}[M/T]\Big)$ of $\bigoplus\limits_{F\in\mcF}H_T^*(M_{\mrF})$.
\end{corollary}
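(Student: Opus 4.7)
The proof is almost immediate from Proposition \ref{HF[x] to HTmrF}. The plan is to observe that both $\bigoplus_{F\in\mcF}H^*(F)[\mathbf{x}_F]$ and $\bigoplus_{F\in\mcF}H_T^*(M_{\mrF})$ are defined as direct sums of rings indexed by $\mcF$, with componentwise multiplication, and the map $\bigoplus_F\rho_F$ is the componentwise assembly of the ring homomorphisms $\rho_F$. Thus, the first assertion follows at once: each $\rho_F$ is a ring isomorphism by Proposition \ref{HF[x] to HTmrF}, and a direct sum of ring isomorphisms is a ring isomorphism.

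For the second assertion, I would recall that by the construction of Section \ref{topological face ring}, the topological face ring $\mathbf{k}[M/T]$ was defined precisely as a subring of $A=\bigoplus_{F\in\mcF}H^*(F)[\mathbf{x}_F]$. Applying the isomorphism $\bigoplus_F\rho_F$ restricts to an injective ring homomorphism on $\mathbf{k}[M/T]$, whose image
\[
\Big(\bigoplus_{F\in\mcF}\rho_F\Big)\Big(\mathbf{k}[M/T]\Big)
\]
is therefore a subring of $\bigoplus_{F\in\mcF}H_T^*(M_{\mrF})$ isomorphic to $\mathbf{k}[M/T]$. This produces the desired identification.

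There is no real obstacle here: all the substantive work has been carried out in establishing Proposition \ref{HF[x] to HTmrF} (which in turn depended on the homotopy equivalence $\psi_p$ and Lemma \ref{psi inverse tau ij F} identifying $(\psi_p^*)^{-1}\tau_{i_j}(M_{\mrF})$ in terms of the generators of $H^*(BT_F)$). The corollary is a formal repackaging of that proposition, so the exposition would simply make explicit the componentwise nature of the direct sum and point to the inclusion $\mathbf{k}[M/T]\subseteq A$ from Section \ref{topological face ring}.
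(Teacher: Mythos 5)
Your proposal is correct and coincides with the paper's treatment: the paper states this as a direct corollary of Proposition \ref{HF[x] to HTmrF} with no further argument, exactly as you do by assembling the componentwise isomorphisms $\rho_F$ and restricting to the subring $\mathbf{k}[M/T]\subseteq A$. No gap here.
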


Hence we can express the topological face ring $\mathbf{k}[M/T]$ in terms of the orbit space $M/T$ and equivariant Thom classes.  
In the remain part of this section we translate   definition of topological face ring defined in Section \ref{topological face ring} in terms of $Q=M/T$ and equvariant Thom classes.

Let $\mcF$ be the set of faces of $M/T$ and 
$\mathring{\mcF}$ be the set of open faces of $M/T$. Let $\{F_1,\dots, F_m\}$ be the set of facets of $M/T$. Since $M/T$ is a nice manifold with corners, for each codimension $k$-face $F$ of $M/T$, where $k\ge 1$, $F$ is a connected component of $F_{i_1}\cap\dots\cap F_{i _k}$. Hence we can associated each face of $M/T$ to a subset of $[m].$ Namely we have a map
\[
\begin{split}
\Psi: \mcF&\to 2^{m}\\
F&\mapsto\{i_1,\dots, i_k\}.
\end{split}
\]
Let 
\[
B:=\bigoplus\limits_{F\in\mcF}H_T^*(M_{\mrF})
.\]
For each element $\alpha\in B$, we denote the $F$-component of $\alpha$ by $\alpha_F$.  Let $\alpha_F=\sum\limits_{\mathbf{m}}\overline{q}^*(a_{\mathbf{m},F})\Big(\tau(M_\mrF)\Big)^{\mathbf{m}}$, where $a_{\mathbf{m},F}\in H^*(F)$, $\mathbf{m}=(m_1,\dots, m_k)$, $\overline{q}^*$ is induced by the map $ET\times_TM_\mrF\to \mrF\to F$ and \red{$\Big(\tau(M_\mrF)\Big)^{\mathbf{m}}=\tau_{i_1}(M_\mrF)^{m_1}\dots \tau_{i_k}(M_\mrF)^{m_k}.$}
For an ordered pair of faces $(F,E)$, we  define a morphism $\phi_{FE}: H_T^*(M_\mrF)\to H_T^*(M_{\mathring{E}})$ as follows.
\begin{equation}\label{def of phiFE for new definition}
\phi_{FE}\left(\sum\limits_{\mathbf{m}}
\overline{q}^*(a_{\mathbf{m},F})\Big(\tau(M_\mrF)\Big)^{\mathbf{m}}\right)
=
\begin{cases}
\sum\limits_{\mathbf{m}}\overline{q}^*(a_{\mathbf{m},F}|_E)\Big(\tau(M_{\mathring{E}})\Big)^{\mathbf{m}}\quad\text{if}~E\subseteq F\\
0\hspace{46mm}\text{otherwise}
\end{cases}
\end{equation}
where $a_{\mathbf{m},F}|_E$ is the restriction of $a_{\mathbf{m},F}$ on $E$ induced by the inclusion map $E\subseteq F.$

\begin{definition}\label{def of F-face elements for new definition}
Let $F$ be a face of $M/T$ and $\Psi(F)=\{i_1,\dots, i_k\}$. An element $\alpha\in B$ is called an $F$-face element if the following two conditions hold:
\begin{itemize}
\item
$\alpha_F=\sum\limits_{\mathbf{m}}\overline{q}^*(a_{\mathbf{m},F})\Big(\tau(M_\mrF)\Big)^{\mathbf{m}}$ satisfying $\mathbf{m}\in\N_{>0}^k;$
\item
$\alpha_E=\phi_{FE}(\alpha_F).$
\end{itemize}
\end{definition}
\begin{definition}\label{topological face ring for new definition}
The subring of $B$ generated by the faces elements of $B$ is called a topological face ring, denoted by $\mathbf{k}[M/T].$ Namely
\[
\mathbf{k}[M/T]=\text{Span}_{\mathbf{k}}\langle\alpha: \alpha~\text{is a $F$-face element of $B$ for some $F\in\mcF$}\rangle.
\]
\end{definition}

\vspace{3mm}
\section{Relation between $H^*_T(M)$ and topological face rings}\label{sect: equivariant cohomology of M}
In this section we show our main result that the equivariant cohomology ring $H^*_T(M)$ is isomorphism to the topological face ring $\mathbf{k}[M/T]$, where $M/T$ is the orbit space of $M$. 
\subsection{The injective map $H^*_T(M)\to\bigoplus\limits_{F\in\mcF}H_T^*(M_{\mrF})$}
In general, for a torus manifold with locally standard action the restriction map $H^*_T(M)\to H^*_T(M^T)$
may not be injective. For instance, if $M$ is an orientable toric origami manifold such that each proper faces of $M/T$ is acyclic whereas $M/T$ is not acyclic, then $H^*_T(M)\to H^*_T(M^T)$ is not injective (see \cite{ay-ma-pa-ze-17}). However the restriction map
\[H^*_T(M)\to\bigoplus\limits_{F\in\mcF}H_T^*(M_{\mrF})\]
is an injective that we show in this subsection. We prove this injection by induction on $^\sharp\mcF$ the number of faces of $M/T$.
\begin{lemma}\label{sharp F(1)=sharp F-1}
Assume that $^\sharp\mcF\ge 2$ and let  $F_{\bullet}$ be a minimal face of $Q=M/T$. Then the following statements hold:
\begin{itemize}
\item
$F_\bullet=\mrF_\bullet;$
\item
$M(1):=M\setminus M_{\mrF_{\bullet}}$ is \red{an open} torus manifold with locally standard action;
\item
$^\sharp\mcF(1)\le^\sharp\mcF-1$, where $\mcF(1)$ the set of faces of $Q(1):=M(1)/T.$
\end{itemize}
\end{lemma}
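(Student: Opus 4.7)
The plan is to address the three assertions in turn, relying throughout on the fact that the open faces of a nice manifold with corners partition the underlying space disjointly.

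\emph{The first assertion} follows from the observation that the closed face $F_\bullet$ decomposes set-theoretically as the disjoint union of its relative interior $\mrF_\bullet$ together with the open faces of its proper subfaces—a direct consequence of Definition \ref{definition of open faces and faces for nice manifold with corners}. Minimality of $F_\bullet$ rules out the existence of proper subfaces, so $F_\bullet = \mrF_\bullet$.

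\emph{For the second assertion}, the first bullet gives $M_{\mrF_\bullet} = q^{-1}(\mrF_\bullet) = q^{-1}(F_\bullet) = M_{F_\bullet}$, which is a closed $T$-invariant submanifold of $M$ by the discussion in Section \ref{basic definition}. Hence $M(1) := M \setminus M_{F_\bullet}$ is a $T$-invariant open subset of $M$, and by the definition given in subsection \ref{subsection of torus manifold with locally standard action} this is an open torus manifold with locally standard action.

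\emph{For the third assertion}, note that $M(1)$ being an open torus manifold with locally standard action, $Q(1) = M(1)/T$ is automatically a nice manifold with corners. Every open face $\mrF'$ of $Q$ other than $\mrF_\bullet$ is disjoint from $\mrF_\bullet = F_\bullet$, hence unaffected by passing to $Q(1)$: it remains a connected subset of the same codimension, so it is still an open face of $Q(1)$. This produces an injection from the set of open faces of $Q$ minus $\{\mrF_\bullet\}$ into the set of open faces of $Q(1)$. Via the bijection $F \leftrightarrow \mrF$ between closed and open faces, this yields $^\sharp \mcF(1) \le {^\sharp \mcF} - 1$.

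The main subtlety lies in the third bullet: one must verify that no open face $\mrF' \neq \mrF_\bullet$ of $Q$ is split into multiple connected components of $Q(1)$ upon removing $F_\bullet$. Minimality of $F_\bullet$ is essential here, because it is precisely what guarantees $\mrF' \cap F_\bullet = \emptyset$ for every other open face, so that no splitting can occur and the local face structure near each surviving open face is unchanged.
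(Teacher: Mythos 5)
Your first two bullets and your overall strategy for the third (identify the faces of $Q(1)$ with the faces of $Q$ other than $F_\bullet$) coincide with the paper's proof, which simply records $\mcF(1)=\{F\setminus F_\bullet : F\in\mcF\}$. The one step that does not work as written is the counting inference: an injection from the set of open faces of $Q$ minus $\{\mrF_\bullet\}$ \emph{into} the set of open faces of $Q(1)$ only gives ${}^\sharp\mcF(1)\ge {}^\sharp\mcF-1$, the reverse of what is claimed. The bound ${}^\sharp\mcF(1)\le {}^\sharp\mcF-1$ needs the opposite containment: every open face of $Q(1)$ must be one of the surviving open faces of $Q$, i.e.\ no new faces appear after removing $F_\bullet$.

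Fortunately your closing ``no splitting'' remark, made precise, supplies exactly this surjectivity, so the gap is in the bookkeeping rather than in the ideas. Since $Q(1)=Q\setminus F_\bullet$ is open in $Q$, the codimension function of $Q(1)$ is the restriction of that of $Q$; hence an open face of $Q(1)$ of codimension $k$ is a connected subset of $c^{-1}(k)$ and therefore lies inside a unique open face $\mrF'$ of $Q$, necessarily $\mrF'\ne\mrF_\bullet$ because it misses $\mrF_\bullet$. As $F_\bullet=\mrF_\bullet$ and distinct open faces are disjoint, $\mrF'$ is entirely contained in $Q(1)$ and is connected, so by maximality of connected components the given open face of $Q(1)$ equals $\mrF'$. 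Thus the open faces of $Q(1)$ are precisely the open faces of $Q$ other than $\mrF_\bullet$, and with the bijection between open and closed faces this gives the desired ${}^\sharp\mcF(1)\le {}^\sharp\mcF-1$ (in fact equality). Replacing your injection statement by this argument, your proof matches the paper's.
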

\begin{proof}
Since $F_\bullet$ is a minimal face of $Q$, there is no face $E\in\mcF$ such that $E\subsetneq F_\bullet.$ we have $\mrF_\bullet=F_\bullet$. Hence the first statement of this lemma holds.

The first statement of this lemma show that $\mrF_\bullet$ is a closed subset of $Q$, which implies that  $M_{\mrF_\bullet}=M_{F_\bullet}$ is a closed $T$-invariant submanifold of $M$. Therefore $M(1)=M\setminus M_{\mrF_{\bullet}}$ is an open $T$-invariant submanifold of $M$, which \red{means that} $M(1)$ is a torus manifold with locally standard action. This proves the second claim of the lemma.

Note that $\mcF(1)=\{F\setminus F_{\bullet}: F\in\mcF\},$ so we have $^\sharp\mcF(1)\le{^\sharp}\mcF-1.$ This proves the third claim of the lemma. 
\end{proof}

\vspace{2mm}
Let $\mrF$ be an open face of $M/T$ and $M_\mrF=q^{-1}(\mrF)$ where $q: M\to M/T$ is the quotient map. Let $X_{\mrF}$ be a $T$-invariant tubular neighbourhood of $M_\mrF$ defined in (\ref{X mathring F}). 
\begin{lemma}\label{surjection of kappa F}
The restriction map
\[\kappa_{\mrF}: H^*_T(X_{\mrF})\to H^*_T(X_{\mrF}\setminus M_{\mrF})\]
is surjective.
\end{lemma}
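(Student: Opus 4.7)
\emph{Plan.} The plan is to insert $\kappa_\mrF$ into the long exact sequence of the pair $(X_\mrF, X_\mrF \setminus M_\mrF)$ in equivariant cohomology,
\[
\cdots \to H^*_T(X_\mrF, X_\mrF \setminus M_\mrF) \xrightarrow{j^*} H^*_T(X_\mrF) \xrightarrow{\kappa_\mrF} H^*_T(X_\mrF \setminus M_\mrF) \to H^{*+1}_T(X_\mrF, X_\mrF \setminus M_\mrF) \to \cdots,
\]
and to prove that $j^*$ is injective in every degree. By exactness this forces $\kappa_\mrF$ to be surjective, which is the desired conclusion.

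First I would identify $j^*$ with multiplication by an equivariant Euler class. By Lemma \ref{X mathring F T-equivariant diff to nu mathring F}, $X_\mrF$ is $T$-equivariantly diffeomorphic to $\nu_\mrF = \nu_F|_{M_\mrF}$, which is a $T$-equivariant complex vector bundle of rank $k$ over $M_\mrF$; indeed by the splitting \eqref{split of nu F} it decomposes as $\bigoplus_{j=1}^{k}\nu_{F_{i_j}}|_{M_\mrF}$, and each summand is a complex line bundle. The equivariant Thom isomorphism then gives $H^*_T(X_\mrF, X_\mrF \setminus M_\mrF) \cong H^{*-2k}_T(M_\mrF)$ via cup product with the equivariant Thom class $u$. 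Composed with the deformation retraction $X_\mrF \simeq M_\mrF$ (zero section), the map $j^*$ becomes multiplication by the equivariant Euler class $e_T(\nu_\mrF) \in H^{2k}_T(M_\mrF)$. The Whitney sum formula in equivariant cohomology then yields $e_T(\nu_\mrF) = \prod_{j=1}^{k} e_T(\nu_{F_{i_j}}|_{M_\mrF}) = \prod_{j=1}^{k} \tau_{i_j}(M_\mrF)$.

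Next I would verify that multiplication by this element is injective. Proposition \ref{HF[x] to HTmrF} supplies the isomorphism $\rho_F : H^*(F)[x_{i_1}, \ldots, x_{i_k}] \xrightarrow{\sim} H^*_T(M_\mrF)$, and Lemma \ref{psi inverse tau ij F} (transported through $\psi_p$) shows that each $\tau_{i_j}(M_\mrF)$ corresponds to $y_j + \alpha_j$, where $y_j = (\mathrm{Res}_{T_{i_j}}(\tau_{i_j}))(p)$ is the polynomial generator attached to the $H^2(BT_{i_j})$-factor and $\alpha_j \in H^2(F)$. Hence, under the polynomial description of $H^*_T(M_\mrF)$, the Euler class corresponds to the product $\prod_{j=1}^{k}(y_j + \alpha_j)$, which is monic of degree $1$ in each separate variable $y_j$ with coefficients in $H^*(F)[y_1,\ldots,\widehat{y_j},\ldots,y_k]$. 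The standard fact that multiplication by a monic polynomial in $R[y]$ is an injective $R[y]$-module endomorphism (leading terms cannot cancel) applies $k$ times, once per variable, and multiplication by the product is the composition of these injections, hence injective.

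The main obstacle is the bookkeeping: one must confirm that the Thom-class element $\tau_F|_{X_\mrF}$ pulled back via $X_\mrF \simeq M_\mrF$ and then $ET \times_T M_\mrF \simeq (BT_F \times \{p\}) \times F$ really coincides with $\prod_j \tau_{i_j}(M_\mrF)$. This amounts to checking that the Whitney sum formula is compatible with the identifications of Section \ref{An explicit formula for H*T(MmrF)}, which follows from naturality of the Thom isomorphism applied to the summands $\nu_{F_{i_j}}|_{M_\mrF}$ individually. Once this compatibility is in hand, the polynomial-ring injectivity argument is elementary and the proof concludes.
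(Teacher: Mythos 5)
Your proposal is correct and follows essentially the same route as the paper's proof: the long exact sequence of the pair $(X_\mrF, X_\mrF\setminus M_\mrF)$, the equivariant Thom isomorphism identifying the map $H^*_T(X_\mrF,X_\mrF\setminus M_\mrF)\to H^*_T(X_\mrF)$ with multiplication by $e^T(\nu_\mrF)=\prod_{j=1}^{k}\tau_{i_j}(M_\mrF)$ via the splitting of $\nu_\mrF$, and injectivity of that multiplication through the polynomial description of $H^*_T(M_\mrF)$ in Proposition \ref{HF[x] to HTmrF}. The only difference is that you spell out, using Lemma \ref{psi inverse tau ij F}, the monic non-zero-divisor argument which the paper leaves implicit when it simply cites Proposition \ref{HF[x] to HTmrF}.
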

\begin{proof}
Consider the long exact sequence for the pair $(X_\mrF, X_\mrF\setminus M_\mrF)$
\begin{equation}\label{long exact sequence for (XmrF, XmrF-MmrF)}
\begin{split}
\cdots&\to H^i_T(X_{\mrF}, X_{\mrF}\setminus M_{\mrF})\to H^i_T(X_{\mrF})\xrightarrow{\kappa_{\mrF}} H^i_T(X_{\mrF}\setminus M_{\mrF})\\
&\to H^{i+1}_T(X_{\mrF}, X_{\mrF}\setminus M_{\mrF})\to\cdots
\end{split}
\end{equation}
To show $\kappa_{\mrF}$ is surjective, it is enough to prove that 
\[H^i_T(X_{\mrF}, X_{\mrF}\setminus M_{\mrF})\to H^i_T(X_{\mrF})\]
is injective. Let $\Psi(F)=\{i_1,\dots,i_k\}$, that is, $F$ be a connected component of $F_{i_1}\cap\cdots\cap F_{i_k}$ where $F_{i_j}$'s are facets of $Q$. Consider the \red{following commutative diagram}.
\begin{center}
\begin{tikzcd}
 H^i_T(X_{\mrF}, X_{\mrF}\setminus M_{\mrF})   \arrow[r, "{}"]  & H^i_T(X_{\mrF}) 
 \\
H^{i-2k}_T(M_{\mrF}) \arrow[u, "\text{}"]   \arrow[r, "{\cup e^T}"]&  H^i_T(M_{\mrF})\arrow[u, "\pi_{\mrF}^*"]
\end{tikzcd}
\end{center}
where $H^{i-2k}_T(M_{\mrF})\to H^i_T(X_{\mrF}, X_{\mrF}\setminus M_{\mrF})$ is the \red{equivariant Thom isomorphism}, $e^T$ is the $T$-equivariant Euler class of the normal bundle $\nu_\mrF$ of $M_{\mrF}$ in $M$, whose total space can identify with $X_{\mrF}$ and 
$\pi^*_\mrF$ is induced by the projection $\pi_\mrF: X_\mrF\to M_\mrF.$

  Since $\pi_{\mrF}^*$ is an isomorphism, to show that $H^i_T(X_{\mrF}, X_{\mrF}\setminus M_{\mrF})\to H^i_T(X_{\mrF})$ is injective it suffices to show that 
\[H^{i-2k}_T(M_{\mrF})\xrightarrow{\cup e^T}H^i_T(M_{\mrF})\]
is injective. By the decomposition (\ref{vector bundle decomposition}) we have
$\nu_{\mrF}\cong\bigoplus_{j=1}^k(\nu_{F_{i_j}}|_{M_{\mrF}}),$
which implies that 
\red{$e^T=\prod_{j=1}^k\tau_{i_j}(M_\mrF)$
where $\tau_{i_j}(M_\mrF)$ is defined \red{at the beginning of subsection \ref{subsection: On HTF(p)}.}}
Hence the injection of the map $H^{i-2k}_T(M_{\mrF})\xrightarrow{\cup e^T}H^i_T(M_{\mrF})$ follows from Proposition \ref{HF[x] to HTmrF}. This completes the proof of the lemma.
\end{proof}

\vspace{2mm}
\red{Let $F$ be a codimension $k$} face of $M/T$ and $\tau_F(X_{\mrF})$ be the image of $1$ given by the composition map
\red{\begin{equation}\label{eq: definition of tauFXmrF by gysin map}
H^0_T(M_F)\to H_T^{2k}(M)\to H_T^{2k}(X_{\mrF}),
\end{equation}}
where the first homomorphism is the equivariant Gysin map and the second homomorphism is induced \red{by} the inclusion map $X_{\mrF}\subseteq M.$
\red{\begin{lemma}\label{kernel of kappa F generated by Thom class}
The kernel of $\kappa_{\mrF}: H^*_T(X_{\mrF})\to H^*_T(X_{\mrF}\setminus M_{\mrF})$ is $\Big(\tau_{F}(X_{\mrF})\Big),$
where $\Big(\tau_{F}(X_{\mrF})\Big)$ is the ideal, of $H^*_T(X_{\mrF})$ generated by $\tau_{F}(X_{\mrF})$.
\end{lemma}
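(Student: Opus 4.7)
The plan is to combine the surjectivity established in Lemma \ref{surjection of kappa F} with the same Thom-isomorphism diagram that appeared in its proof. Because $\kappa_{\mrF}$ is surjective, the long exact sequence (\ref{long exact sequence for (XmrF, XmrF-MmrF)}) of the pair $(X_{\mrF}, X_{\mrF}\setminus M_{\mrF})$ breaks into short exact sequences, and $\ker\kappa_{\mrF}$ is exactly the image of the natural map $H^*_T(X_{\mrF}, X_{\mrF}\setminus M_{\mrF}) \to H^*_T(X_{\mrF})$. So the task reduces to describing this image as a principal ideal.

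By the commutative square used in the proof of Lemma \ref{surjection of kappa F}, this image is carried under the isomorphism $\pi_{\mrF}^*:H^*_T(M_{\mrF})\xrightarrow{\sim} H^*_T(X_{\mrF})$ to the image of the multiplication map $H^{*-2k}_T(M_{\mrF})\xrightarrow{\cup e^T} H^*_T(M_{\mrF})$, where $e^T$ is the equivariant Euler class of the normal bundle $\nu_{\mrF}$. This image is the principal ideal $(e^T)$ of $H^*_T(M_{\mrF})$, and since $\pi_{\mrF}^*$ is a ring isomorphism, $\ker\kappa_{\mrF}$ is the principal ideal of $H^*_T(X_{\mrF})$ generated by $\pi_{\mrF}^*(e^T)$. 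Using the splitting (\ref{vector bundle decomposition}) and multiplicativity of the Euler class, one has $e^T=\prod_{j=1}^{k}\tau_{i_j}(M_{\mrF})$.

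It therefore suffices to identify $\tau_F(X_{\mrF}) = \pi_{\mrF}^*(e^T)$. I would unwind the definition (\ref{eq: definition of tauFXmrF by gysin map}): the equivariant Gysin map $H^0_T(M_F)\to H^{2k}_T(M)$ sends $1$ to the image of the equivariant Thom class of $\nu_F$ under the composition $H^{2k}_T(X_F, X_F\setminus M_F) \cong H^{2k}_T(M, M\setminus M_F) \to H^{2k}_T(M)$, the first map being excision. Restricting along the open inclusion $X_{\mrF}\hookrightarrow M$ and invoking naturality of the equivariant Thom class under pullback of $\nu_F$ along $M_{\mrF}\hookrightarrow M_F$, the image in $H^{2k}_T(X_{\mrF})$ is represented by the Thom class of $\nu_{\mrF}$, which under the Thom isomorphism matches $\pi_{\mrF}^*(e^T)$. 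Combining the two identifications yields $\ker\kappa_{\mrF}=\bigl(\tau_F(X_{\mrF})\bigr)$.

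The principal obstacle is the final identification, because it requires carefully tracking the equivariant Thom class through a chain consisting of an excision isomorphism, a restriction to an open subset, and the Thom isomorphism, and checking that the Gysin-map description produces the same class as $\pi_{\mrF}^*(e^T)$. Once this naturality of the Thom class is settled, everything else is an immediate consequence of the preceding lemma and the structure of the long exact sequence.
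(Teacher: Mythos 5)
Your proposal is correct and follows essentially the same route as the paper: reduce via the long exact sequence to computing the image of $H^*_T(X_{\mrF}, X_{\mrF}\setminus M_{\mrF})\to H^*_T(X_{\mrF})$, identify that image with $\pi_{\mrF}^*\bigl((e^T)\bigr)$ through the Thom-isomorphism square from Lemma \ref{surjection of kappa F}, and then check $\pi_{\mrF}^*(e^T)=\tau_F(X_{\mrF})$ by tracking the Thom class through excision and the restriction $X_{\mrF}\subseteq M$. The ``principal obstacle'' you flag is exactly what the paper handles with its explicit commutative diagram chasing $1\in H^0_T(M_F)$ along the two compositions.
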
}
\begin{proof}
By the long exact sequence (\ref{long exact sequence for (XmrF, XmrF-MmrF)}), it suffices to show that the image of 
$H^*_T(X_{\mrF}, X_{\mrF}\setminus M_{\mrF})\to H^*_T(X_{\mrF})$ is $\Big(\tau_{F}(X_{\mrF})\Big)$. By the arguments of the proof of Lemma \ref{surjection of kappa F}, the image of $H^*_T(X_{\mrF}, X_{\mrF}\setminus M_{\mrF})\to H^*_T(X_{\mrF})$ is exact $\pi_{\mrF}^*\Big((e^T_{\mrF})\Big)$, where $(e^T_{\mrF})$ is the $T$-equivariant Euler class of the normal bundle $\nu_\mrF$ of $M_{\mrF}$ in $M$ and $(e^T_{\mrF})$ is an ideal, generated by $(e^T_{\mrF})$, of $H^*_T(M_\mrF).$ Therefore it is enough to show that 
\[\pi_{\mrF}^*(e^T_{\mrF})=\tau_{F}(X_{\mrF}).\]
Consider the following commutative diagram
\begin{center}
	\begin{tikzcd}
		H^{2k}_T(M, M\setminus M_{F}) \arrow[rrr] \arrow[d, "\cong"']&&& H^{2k}_T(M) \arrow[d]\\
		H^{2k}_T(X_{F}, X_{F}\setminus M_{F}) \arrow[dr] \arrow[rrr] &  & & H^{2k}_T(X_{F}) \arrow[dl]\\
		& H^{2k}_T(X_{\mrF}, X_{\mrF}\setminus M_{\mrF}) \arrow[r] & H^{2k}_T(X_{\mrF}) & \\
		& H^{0}_T(M_{\mrF}) \arrow[u,"{\cong}"] \arrow[r,"{\cup e^T_{\mrF}}"] & H^{2k}_T(M_{\mrF}) \arrow[u,"\pi_{\mrF}^*"] & \\
		H^{0}_T(M_{F})  \arrow[uuu,"{\cong}"] \arrow[ur]\arrow[rrr,"{\cup e^T_F}"]& & & H^{2k}_T(M_{F}) \arrow[uuu,"\pi_{F}^*"] \arrow[ul],
	\end{tikzcd}
\end{center}
where $(e^T_F)$ is the $T$-equivariant Euler class of the normal bundle $\nu_F$ of $M_{F}$ in $M$, $2k$ is the rank of $\nu_F$ as a real vector bundle, the isomorphisms in the commutative diagram are from excision and Thom isomorphism. Hence 
$\pi_{\mrF}^*(e^T_{\mrF})$ is the image of $1\in H^0_T(M_F)$ by the composition map
\[H^0_T(M_F)\to H^{2k}_T(M_F)\to H^{2k}_T(M_\mrF)\to H^{2k}_T(X_\mrF).\]
\red{By (\ref{eq: definition of tauFXmrF by gysin map}) and the above commutative diagram,} $\tau_{F}(X_{\mrF})$ is 
the image of $1\in H^0_T(M_F)$ by the composition map 
\[H^0_T(M_F)\to H^{2k}_T(X_F,X_F\setminus M_F)\to H^{2k}_T(M,M\setminus M_F)\to H^{2k}_T(M)\to H^{2k}_T(X_F)\to H^{2k}_T(X_\mrF).\] 
Therefore \[\pi_{\mrF}^*(e^T_{\mrF})=\tau_{F}(X_{\mrF}).\]
\end{proof}
\vspace{2mm}
\begin{proposition}\label{injective observation}
Let $M$ ba an open torus manifold with locally standard action. Then the restriction map
\[
\bigoplus\limits_{F\in\mcF} r_{\mrF}: H^*_T(M)\to\bigoplus\limits_{F\in\mcF}H_T^*(M_{\mrF})
\]
is injective, where 
$\mcF$ is the set of faces of $Q$ and $r_\mrF: H^*_T(M)\to H^*_T(M_\mrF)$ is the restriction map induced by the inclusion map $M_\mrF\subseteq M.$
\end{proposition}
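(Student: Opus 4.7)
The plan is to prove injectivity by induction on $^\sharp\mcF$, the number of faces of $Q = M/T$, using equivariant Mayer--Vietoris together with the surjectivity result of Lemma \ref{surjection of kappa F}. The base case $^\sharp\mcF = 1$ is immediate: then $Q$ has no facets, the $T$-action on $M$ is free, $M = M_{\mathring{Q}}$, and the restriction map reduces to the identity on $H^*_T(M)$, which is obviously injective.

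For the inductive step, suppose the proposition holds whenever the orbit space has strictly fewer than $N$ faces, and let $Q$ have $N := {}^\sharp\mcF \geq 2$ faces. Lemma \ref{sharp F(1)=sharp F-1} supplies a minimal face $F_\bullet \in \mcF$; minimality forces $\mrF_\bullet = F_\bullet$, so $M_{\mrF_\bullet} = M_{F_\bullet}$ is closed in $M$ and $M(1) := M \setminus M_{F_\bullet}$ is an open torus manifold with locally standard action whose orbit space $Q(1) = Q \setminus F_\bullet$ satisfies $^\sharp\mcF(1) \leq N-1$. Crucially, the open faces of $Q(1)$ are precisely $\mathring{\mcF} \setminus \{\mrF_\bullet\}$, and for every such $\mrF$ one has $M(1)_\mrF = M_\mrF$; consequently the inductive hypothesis applied to $M(1)$ yields the injectivity of
\[
H^*_T(M(1)) \to \bigoplus_{\mrF \in \mathring{\mcF}\setminus\{\mrF_\bullet\}} H^*_T(M_\mrF).
\]

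Next I would cover $M$ by the two $T$-invariant open subsets $X_{\mrF_\bullet}$ (from Lemma \ref{T-invariant open tubular nbhd of MF}) and $M(1)$, whose intersection is $X_{\mrF_\bullet} \setminus M_{\mrF_\bullet}$. Applying the equivariant Mayer--Vietoris sequence to the Borel construction produces
\[
\cdots \to H^*_T(M) \to H^*_T(X_{\mrF_\bullet}) \oplus H^*_T(M(1)) \xrightarrow{\delta} H^*_T(X_{\mrF_\bullet} \setminus M_{\mrF_\bullet}) \to \cdots,
\]
with $\delta(a,b) = \kappa_{\mrF_\bullet}(a) - b|_{X_{\mrF_\bullet} \setminus M_{\mrF_\bullet}}$. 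Lemma \ref{surjection of kappa F} makes $\kappa_{\mrF_\bullet}$ surjective, so $\delta$ is surjective and the long exact sequence breaks into short exact sequences; in particular, $H^*_T(M) \hookrightarrow H^*_T(X_{\mrF_\bullet}) \oplus H^*_T(M(1))$ is injective.

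To conclude, take $\alpha \in H^*_T(M)$ with $r_\mrF(\alpha) = 0$ for every $F \in \mcF$. The projection $\pi_{\mrF_\bullet}: X_{\mrF_\bullet} \to M_{\mrF_\bullet}$ is a $T$-equivariant deformation retract, so $\pi_{\mrF_\bullet}^*$ is an isomorphism and $\alpha|_{X_{\mrF_\bullet}} = \pi_{\mrF_\bullet}^*(r_{\mrF_\bullet}(\alpha)) = 0$. The vanishing of $r_\mrF(\alpha)$ for every $F \neq F_\bullet$ combined with the inductive hypothesis forces $\alpha|_{M(1)} = 0$, and the Mayer--Vietoris injection then gives $\alpha = 0$. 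The main delicate point is to ensure that the Mayer--Vietoris long exact sequence genuinely splits into short exact sequences, which is exactly the role of Lemma \ref{surjection of kappa F}; the remainder is a careful bookkeeping between the open faces of $Q$ and those of $Q(1)$, already prepared by Lemma \ref{sharp F(1)=sharp F-1}.
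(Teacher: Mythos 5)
Your proposal is correct and follows essentially the same route as the paper: induction on the number of faces, removal of a minimal face $F_\bullet$, the Mayer--Vietoris sequence for the cover $M=M(1)\cup X_{\mrF_\bullet}$, surjectivity of $\kappa_{\mrF_\bullet}$ from Lemma \ref{surjection of kappa F} to split it, and then the inductive hypothesis together with the isomorphism $\pi_{\mrF_\bullet}^*$. The only difference is presentational (an element chase instead of the paper's commutative diagram), so no further comment is needed.
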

\begin{proof}
We prove this proposition by induction on $^\sharp\mcF$, the number of elements of $\mcF$. 

When $^\sharp\mcF=1$, there is no proper face of $Q=M/T$, so $M=M_{\mathring{Q}}$. Hence $H_T^*(M)=H^*_T(M_{\mathring{Q}})$ which implies that the restriction map
\[r_{\mathring{Q}}: H_T^*(M)\to H^*_T(M_{\mathring{Q}})\]
is an identity map. In particular, $r_{\mathring{Q}}$ is injective.

Assume that $^\sharp\mcF\ge 2$. Let $F_{\bullet}$ be a minimal face of $Q=M/T$ and $M(1):=M\setminus M_{\mrF_{\bullet}}$. Then by Lemma \ref{sharp F(1)=sharp F-1}, we have $^\sharp\mcF(1)\le^\sharp\mcF-1$, where 
$\mcF(1)~\text{is the set of faces of $Q(1):=M(1)/T.$}$
Hence  by induction hypothesis we have the injective map
\begin{equation}\label{injection of oplus rmrF}
\bigoplus\limits_{F\in\mcF(1)} r_{\mrF}: H^*_T(M(1))\to\bigoplus\limits_{F\in\mcF(1)} H^*_T(M_{\mrF}).
\end{equation}
Since $M=M(1)\cup X_{\mrF_\bullet}$ and $M(1)\cap X_{\mrF_\bullet}=X_{\mrF_\bullet}\setminus M_{\mrF_\bullet}$, we have the  Mayer–Vietoris exact sequence 
\begin{equation}\label{M-V sequence}
\begin{split}
\cdots\to& H^i_T(M)\to H^i_T(M(1))\oplus H^i_T(X_{\mrF_\bullet})\to H^i_T(X_{\mrF_\bullet}\setminus M_{\mrF_\bullet})\\
\to&H^{i+1}_T(M)\to\cdots
\end{split}.
\end{equation} 
By Lemma \ref{surjection of kappa F} for any $i\ge 0$
\[H^i_T(M(1))\oplus H^i_T(X_{\mrF_\bullet})\to H^i_T(X_{\mrF_\bullet}\setminus M_{\mrF_\bullet})\]
is surjective, so 
\[H^i_T(M)\to H^i_T(M(1))\oplus H^i_T(X_{\mrF_\bullet})\]
is injective.
Note that the following diagram is commutative

\begin{center}
\begin{equation}\label{commutative diagram for HT*(M) and HT*(M(1)) and HT*(XF)}
\begin{tikzcd}
 H^i_T(M)   \arrow[d, "{\bigoplus\limits_{F\in\mcF} r_{\mrF}}"]\arrow[r, "{}"]  & H^i_T(M(1))\bigoplus H^i_T(X_{\mrF_\bullet})\arrow[d, "{(\bigoplus\limits_{F\in\mcF(1)} r_{\mrF})\bigoplus(\pi_{\mrF_\bullet}^*)^{-1}}"]\\
\bigoplus\limits_{F\in\mcF}H^{i}_T(M_{\mrF})    \arrow[r, "{=}"]& \Big(\bigoplus\limits_{F\in\mcF(1)}H^i_T(M(1)_{\mrF})\Big)\bigoplus H^i_T(M_{\mrF_\bullet}).\end{tikzcd}
\end{equation}
\end{center}
Since the maps (\ref{injection of oplus rmrF}) is injective by induction hypothesis, $H^i_T(M)\to H^i_T(M(1))\oplus H^i_T(X_{\mrF_\bullet})$ is injective which we have shown and $(\pi_{\mrF}^*)^{-1}$ is an isomorphism, we obtain that 
 \[\bigoplus\limits_{F\in\mcF} r_{\mrF}:  H^i_T(M)\to \bigoplus\limits_{F\in\mcF}H^{i}_T(M_{\mrF})\] is injective by the above commutative diagram. This completes the proof of Proposition \ref{injective observation}.
\end{proof}

\vspace{3mm}
\subsection{The image of the restriction map $H^*_T(M)\to\bigoplus\limits_{F\in\mcF}H_T^*(M_{\mrF})$} 
In the last subsection, we show that the restriction map $\bigoplus\limits_{F\in\mcF} r_{\mrF}: H^*_T(M)\to\bigoplus\limits_{F\in\mcF}H_T^*(M_{\mrF})$ is injective, so $H^*_T(M)$ is isomorphic to $\text{Im}(\bigoplus\limits_{F\in\mcF} r_{\mrF}).$ In this subsection we show that $\text{Im}(\bigoplus\limits_{F\in\mcF} r_{\mrF})$ is isomorphic to the topological face ring $\mathbf{k}[M/T].$ We divide the proof into two parts that are $\mathbf{k}[M/T]\subseteq \text{Im}(\bigoplus\limits_{F\in\mcF} r_{\mrF})$ and $\text{Im}(\bigoplus\limits_{F\in\mcF} r_{\mrF})\subseteq \mathbf{k}[M/T]$.

\vspace{2mm}

\red{Let $F_{\bullet}$ be a minimal face of $Q=M/T$, $M_\mrF=q^{-1}(\mrF)$ and $X_{\mrF}=\pi_{F}^{-1}(M_{\mrF})$ defined in \ref{X mathring F}}
\begin{lemma}\label{injection of r}
The map restriction map 
\[r: H^i_T(X_{\mrF_\bullet}\setminus M_{\mrF_\bullet})\to \bigoplus\limits_{F\in\mcF(1)}H^i_T(M(1)_{\mrF}\cap X_{\mrF_{\bullet}})\]
is injective.
\end{lemma}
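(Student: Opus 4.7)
The plan is to realize $X_{\mrF_\bullet}\setminus M_{\mrF_\bullet}$ as an open torus manifold with locally standard action in its own right, and then transport Proposition \ref{injective observation} applied to this manifold across a natural identification of the two targets.

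First I would set $Y := X_{\mrF_\bullet}\setminus M_{\mrF_\bullet}$ and observe that $Y$ is a $T$-invariant open subset of $M$. Indeed, since $F_\bullet$ is a minimal face of $Q$, Lemma \ref{sharp F(1)=sharp F-1} gives $\mrF_\bullet = F_\bullet$, so $M_{\mrF_\bullet}$ is closed and $T$-invariant in $M$, and consequently $Y$ is open and $T$-invariant in $M$. Hence $Y$ is an open torus manifold with locally standard action, and its orbit space $Y/T = X_{\mrF_\bullet}/T\setminus \mrF_\bullet$ is a nice manifold with corners. Applying Proposition \ref{injective observation} to $Y$, the restriction map
\[
H^*_T(Y) \;\longrightarrow\; \bigoplus_{F'\in \mcF(Y/T)} H^*_T(Y_{\mrF'})
\]
is injective, where $\mcF(Y/T)$ denotes the set of faces of $Y/T$.

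The next step is to identify this target with $\bigoplus_{F\in\mcF(1)} H^*_T\bigl(M(1)_{\mrF}\cap X_{\mrF_\bullet}\bigr)$. Because the isotropy subgroup of each point of $Y$ is intrinsic to the $T$-action and hence agrees with its isotropy as a point of $M(1)$, every open face of $Y/T$ is a connected component of $\mrF\cap X_{\mrF_\bullet}/T$ for a unique open face $\mrF$ of $Q(1)$. Conversely, for each $F\in\mcF(1)$,
\[
M(1)_{\mrF}\cap X_{\mrF_\bullet} \;=\; \bigsqcup_{\substack{F' \in \mcF(Y/T) \\ F' \subseteq F}} Y_{\mrF'}
\]
as topological spaces, where the right-hand side is empty whenever $\mrF$ does not meet $X_{\mrF_\bullet}/T$. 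Taking equivariant cohomology of this disjoint union and summing over $F\in\mcF(1)$ then yields a natural isomorphism
\[
\bigoplus_{F\in\mcF(1)} H^*_T\bigl(M(1)_{\mrF}\cap X_{\mrF_\bullet}\bigr) \;\cong\; \bigoplus_{F'\in \mcF(Y/T)} H^*_T(Y_{\mrF'}),
\]
under which $r$ coincides with the restriction map of Proposition \ref{injective observation}. Injectivity of $r$ follows immediately.

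The main obstacle is the face-level bookkeeping in the second paragraph: verifying that each open face of $Y/T$ is a connected component of $\mrF\cap X_{\mrF_\bullet}/T$ for a unique $\mrF\in\mathring{\mcF}(1)$, and that no face of $Q(1)$ gives rise to spurious contributions. This is where I would rely on the local product model $X_{\mrF_\bullet}/T\cong \mrF_\bullet\times \RR^{k_\bullet}_{\ge 0}$ provided by Corollary \ref{local model for faces}, which makes the decomposition of $\mrF\cap X_{\mrF_\bullet}/T$ into connected components transparent, combined with the general fact that open faces of the orbit space of a locally standard torus manifold are exactly the connected components of common-isotropy loci.
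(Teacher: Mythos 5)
Your proposal is correct and takes essentially the same route as the paper: both realize $X_{\mrF_\bullet}\setminus M_{\mrF_\bullet}$ as an open torus manifold with locally standard action, apply Proposition \ref{injective observation} to it, and identify its face pieces with the intersections $M(1)_{\mrF}\cap X_{\mrF_\bullet}$. The only difference is bookkeeping: the paper shows each $M(1)_{\mrF}\cap X_{\mrF_\bullet}$ (for $F_\bullet\subsetneq F$) is connected, so these are exactly the preimages of the open faces, while you allow them to split into several open faces and invoke additivity of $H^*_T$ over disjoint unions — in either case $r$ composed with the resulting identification is the injective restriction map of the proposition.
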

\begin{proof}
We claim that if $F_\bullet\nsubseteq F$ then $M_{\mrF}\cap X_{\mrF_\bullet}=\emptyset.$ Indeed, if $p\in X_{\mrF_\bullet}$, then by Lemma \ref{X mathring F T-equivariant diff to nu mathring F} and Corollary \ref{local model for faces} the isotropy subgroup of $p$ is a subgroup of $T_{F_\bullet}$. However for $q\in M_{\mrF}$, the isotropy subgroup of $q$ is $T_F$ which is not a subgroup of $T_{F_\bullet}$. This implies that $M_{\mrF}\cap X_{\mrF_\bullet}=\emptyset$, so the claim holds.

Hence we have the following $T$-orbit decomposition
\[X_{\mrF_\bullet}\setminus M_{\mrF_\bullet}=\bigsqcup_{F\in\mcF(1), F_\bullet\subsetneq F}(M(1)_{\mrF}\cap X_{\mrF_\bullet}).\]
Since $M_F\cap X_{F_\bullet}$ 
can identify with the normal bundle of $M_{F_\bullet}$ in $M_F$, $M_F\cap X_{F_\bullet}$ is connected. Note that 
$M(1)_{\mrF}\cap X_{F_\bullet}$ is obtained from $M_F\cap X_{F_\bullet}$ by removing some submanifolds with codimension at least 3, so $M(1)_{\mrF}\cap X_{F_\bullet}$ is also connected. Therefore the set of open faces of
$X_{\mrF_\bullet}\setminus M_{\mrF_\bullet}$ is exact 
\[\{M(1)_{\mrF}\cap X_{\mrF_\bullet})/T: F\in\mcF(1), F_\bullet\subsetneq F\}.\]
Therefore this lemma follows from Proposition \ref{injective observation}.
\end{proof}

\vspace{2mm}
By the previous arguments, we see that the Mayer-Vietoris exact sequence (\ref{M-V sequence}) can split into the short exact sequence 
\begin{equation}\label{split into short exact sequence}
0\to H_T^*(M)\to H^*_T(M(1))\oplus H^*_T(X_{\mrF_\bullet})\xrightarrow{r_1-\kappa_{\mrF_\bullet}} H^*_T(X_{\mrF_\bullet}\setminus M_{\mrF_\bullet})\to0.
\end{equation}
where $r_1: H^*_T(M(1))\to H^*_T(X_{\mrF_\bullet}\setminus M_{\mrF_\bullet})$ and $\kappa_{\mrF_\bullet}:H^*_T(X_{\mrF_\bullet})\to H^*_T(X_{\mrF_\bullet}\setminus M_{\mrF_\bullet})$ are restriction maps induced by inclusions $X_{\mrF_\bullet}\setminus M_{\mrF_\bullet}\subseteq M(1)$ and $X_{\mrF_\bullet}\setminus M_{\mrF_\bullet}\subseteq X_{\mrF_\bullet}$ respectively. Moreover, we have the following commutative diagram that will be used later,
\begin{center}
\begin{equation}\label{commutative diagram for r(1)xF-kappa F}
\begin{tikzcd}
 H^i_T(M(1))\oplus H^i_T(X_{\mrF_\bullet})  \arrow[d, "{(\bigoplus\limits_{F\in\mcF(1)} r_{\mrF})\bigoplus id}"']\arrow[rrr, "{r_1-\kappa_{\mrF_\bullet}}"]  &&& H^i_T(X_{\mrF_\bullet}\setminus M_{\mrF_\bullet})\arrow[d, "{r}"]\\
\Big(\bigoplus\limits_{F\in\mcF(1)}H^i_T(M(1)_{\mrF})\Big)\bigoplus H^i_T(X_{\mrF_\bullet}) \arrow[rrr, "{\bigoplus\limits_{F\in\mcF(1)}(h^1_{\mrF}-h^2_{\mrF})}"]&&& \bigoplus\limits_{F\in\mcF(1)}H^i_T(M(1)_{\mrF}\cap X_{\mrF_{\bullet}})\end{tikzcd}
\end{equation}
\end{center}
where all the maps are induced by the inclusion maps,
$h^1_\mrF: H^*_T(M(1)_{\mrF}))\to H^*_T(M(1)_{\mrF}\cap X_{\mrF_\bullet})
$
and $h^2_{\mrF}:H^*_T(X_{\mrF_\bullet})\to H^*_T(M(1)_{\mrF}\cap X_{\mrF_\bullet})$. 
\begin{corollary}\label{H*T(M) as ker cap Im}
Let the $\mathcal{R}$ be the kernel of the map 
\[
r_1-\kappa_{\mrF_\bullet}:
H^*_T(M(1))\oplus H^*_T(X_{\mrF_\bullet})\to H^*_T(X_{\mrF_\bullet}\setminus M_{\mrF_\bullet})
\]
Then
\[
\Big((\bigoplus\limits_{F\in\mcF(1)} r_{\mrF})\oplus id\Big)(\mathcal{R})=\ker\Big(\bigoplus\limits_{F\in\mcF(1)}(h^1_{\mrF}-h^2_{\mrF})\Big)\cap \text{Im}\Big((\bigoplus\limits_{F\in\mcF(1)} r_{\mrF})\bigoplus id\Big).\]
\end{corollary}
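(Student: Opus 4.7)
The plan is to prove the claimed equality by a straightforward diagram chase in the commutative square (\ref{commutative diagram for r(1)xF-kappa F}), with the only substantial input being the injectivity of the right vertical arrow $r$ supplied by Lemma \ref{injection of r}. I will establish the two inclusions separately.

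For the inclusion ``$\subseteq$'', I would take $x \in \mathcal{R}$. Membership in $\mathrm{Im}\bigl((\bigoplus_{F \in \mcF(1)} r_{\mrF}) \oplus id\bigr)$ is automatic. To show that $\bigl((\bigoplus_{F \in \mcF(1)} r_{\mrF}) \oplus id\bigr)(x)$ is killed by $\bigoplus_{F \in \mcF(1)}(h^1_{\mrF} - h^2_{\mrF})$, I would apply commutativity of (\ref{commutative diagram for r(1)xF-kappa F}) to the relation $(r_1 - \kappa_{\mrF_\bullet})(x) = 0$ coming from the definition of $\mathcal{R}$.

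For the reverse inclusion ``$\supseteq$'', let $y$ lie in the intersection on the right-hand side. I would pick any preimage $x \in H^*_T(M(1)) \oplus H^*_T(X_{\mrF_\bullet})$ of $y$ under $(\bigoplus_{F \in \mcF(1)} r_{\mrF}) \oplus id$. Commutativity of (\ref{commutative diagram for r(1)xF-kappa F}) then gives
\[
r\bigl((r_1 - \kappa_{\mrF_\bullet})(x)\bigr) = \Bigl(\bigoplus_{F \in \mcF(1)}(h^1_{\mrF} - h^2_{\mrF})\Bigr)(y) = 0,
\]
and here Lemma \ref{injection of r} intervenes: since $r$ is injective, we conclude $(r_1 - \kappa_{\mrF_\bullet})(x) = 0$, so $x \in \mathcal{R}$ and therefore $y \in \bigl((\bigoplus_{F \in \mcF(1)} r_{\mrF}) \oplus id\bigr)(\mathcal{R})$.

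There is no real obstacle; the corollary is essentially a bookkeeping consequence of the short exact sequence (\ref{split into short exact sequence}), the commutative diagram (\ref{commutative diagram for r(1)xF-kappa F}), and the injectivity of $r$. The only mild subtlety is in the reverse direction, where one must choose a preimage $x$ of $y$ and then verify that the chosen $x$ lies in $\mathcal{R}$; the injectivity of $r$ is precisely what makes the choice of preimage irrelevant, so this point causes no genuine difficulty.
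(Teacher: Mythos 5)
Your proof is correct and is essentially the ``direct verification'' the paper leaves to the reader: a two-way diagram chase in (\ref{commutative diagram for r(1)xF-kappa F}) using commutativity for the inclusion $\subseteq$ and the injectivity of $r$ from Lemma \ref{injection of r} for $\supseteq$. The only minor difference is that the paper invokes injectivity of both vertical maps, while your argument correctly shows that only the injectivity of the right-hand map $r$ is needed for this set-theoretic equality.
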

\begin{proof}
Note that two vertical maps in the above commutative diagram are injective, so we can obtain the following corollary which follows from direct verification.
\end{proof}
\vspace{2mm}
Let $\alpha\in\bigoplus\limits_{F\in\mcF}H^i_T(M_{\mrF})$ and 
$\widehat{\alpha}$ be the image of $\alpha$ through the isomorphism
\[\Big(\bigoplus\limits_{F\in\mcF(1)}H^i_T(M(1)_{\mrF})\Big)\bigoplus H^i_T(M_{\mrF_\bullet})\to \Big(\bigoplus\limits_{F\in\mcF(1)}H^i_T(M(1)_{\mrF})\Big)\bigoplus H^i_T(X_{\mrF_\bullet}).\]
\begin{lemma}\label{E-face element in kernel}
Let $E$ be a face of $M/T$ and $\alpha$ is an $E$-face element. Then 
\[
\widehat{\alpha}\in\ker\Big(\bigoplus\limits_{F\in\mcF(1)}(h^1_{\mrF}-h^2_{\mrF})\Big).
\]
\end{lemma}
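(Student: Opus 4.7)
The plan is to verify, for each $F \in \mcF(1)$, the equation $h^1_{\mrF}(\widehat{\alpha}_F) = h^2_{\mrF}(\widehat{\alpha}_{X_{\mrF_\bullet}})$ in $H^*_T(M(1)_{\mrF} \cap X_{\mrF_\bullet})$. Since $\mcF(1) = \mcF \setminus \{F_\bullet\}$ and $M(1)_{\mrF} = M_{\mrF}$ for such $F$, we have $\widehat{\alpha}_F = \alpha_F$ and $\widehat{\alpha}_{X_{\mrF_\bullet}} = \pi_{\mrF_\bullet}^*(\alpha_{F_\bullet})$. If $F_\bullet \nsubseteq F$, then by the claim in the proof of Lemma \ref{injection of r} we have $M_{\mrF} \cap X_{\mrF_\bullet} = \emptyset$, so both sides vanish trivially. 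I would henceforth assume $F_\bullet \subsetneq F$ and split into sub-cases according to how $F$ and $F_\bullet$ sit with respect to $E$.

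Two sub-cases are routine. If $F_\bullet \nsubseteq E$, then the definition of $\phi_{EF_\bullet}$ gives $\alpha_{F_\bullet} = 0$, so the right-hand side is zero; moreover $F_\bullet \subsetneq F$ together with $F \subseteq E$ would force $F_\bullet \subseteq E$, a contradiction, so $F \nsubseteq E$ and $\alpha_F = 0$ as well. If instead $F_\bullet \subseteq F \subseteq E$, then I would expand each side using the explicit $E$-face-element form
\[
\alpha_G = \sum_{\mathbf{m} \in \N_{>0}^{|\Psi(E)|}} \overline{q}^*(a_{\mathbf{m}}|_G) \prod_{i_j \in \Psi(E)} \tau_{i_j}(M_{\mathring G})^{m_j}
\]
for $G \in \{F, F_\bullet\}$. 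Using $\pi_{\mrF_\bullet}^*(\tau_{i_j}(M_{\mrF_\bullet})) = \tau_{i_j}(X_{\mrF_\bullet})$ and functoriality of restriction, the Thom-class factors on each side collapse monomial-by-monomial to $(\tau_{i_j}|_{M_{\mrF} \cap X_{\mrF_\bullet}})^{m_j}$. The equality then reduces to showing that two pullbacks of $a_\mathbf{m} \in H^*(E)$ to $H^*_T(M_{\mrF} \cap X_{\mrF_\bullet})$ coincide: one via the inclusion $(M_{\mrF} \cap X_{\mrF_\bullet})/T \hookrightarrow \mrF \subseteq F \subseteq E$, the other via the composition $(M_{\mrF} \cap X_{\mrF_\bullet})/T \to \mrF_\bullet \subseteq F_\bullet \subseteq E$ induced by the projection $\pi_{\mrF_\bullet}$. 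By Corollary \ref{local model for faces}, $X_{\mrF_\bullet}/T \cong \mrF_\bullet \times \RR^{|\Psi(F_\bullet)|}_{\ge 0}$, and the straight-line homotopy $(x,r) \mapsto (x, tr)$ exhibits these two maps into $E$ as homotopic, with the homotopy remaining inside $E$ because, locally near $F_\bullet$, the face of $\RR^{|\Psi(F_\bullet)|}_{\ge 0}$ corresponding to $E$ is a cone containing the one corresponding to $F$.

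The remaining sub-case, $F_\bullet \subseteq E$ with $F \nsubseteq E$, is the main obstacle. Here $\alpha_F = 0$, but $\alpha_{F_\bullet}$ need not vanish, so I must show independently that the right-hand side is zero. The crucial point is the positivity condition in Definition \ref{def of F-face elements for new definition}: every monomial of $\alpha_{F_\bullet} = \phi_{EF_\bullet}(\alpha_E)$ carries $\tau_{i_j}(M_{\mrF_\bullet})$ to a strictly positive power for every $i_j \in \Psi(E)$. Since $F \nsubseteq E$, we have $\Psi(E) \nsubseteq \Psi(F)$, so I would choose an index $i_j \in \Psi(E) \setminus \Psi(F)$. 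Then $\mrF \cap F_{i_j} = \emptyset$ (because $\mrF$ is the relative interior of $F$ and $F$ is not a sub-face of $F_{i_j}$), so $M_{\mrF} \cap M_{F_{i_j}} = \emptyset$. Since $\tau_{i_j}$ is the image of the equivariant Thom class in $H^2_T(M, M \setminus M_{F_{i_j}})$, exactness of the long exact sequence of the pair $(M, M \setminus M_{F_{i_j}})$ forces $\tau_{i_j}$ to restrict to zero on any subset of $M \setminus M_{F_{i_j}}$, in particular on $M_{\mrF} \cap X_{\mrF_\bullet}$. Consequently every monomial of $h^2_{\mrF}(\pi_{\mrF_\bullet}^*(\alpha_{F_\bullet}))$ contains a vanishing factor, so this side is also zero.
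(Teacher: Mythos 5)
Your proposal is correct and follows essentially the same route as the paper's proof: a case-by-case comparison of $h^1_{\mrF}$ and $h^2_{\mrF}$ using (i) $M_{\mrF}\cap X_{\mrF_\bullet}=\emptyset$ when $F_\bullet\nsubseteq F$, (ii) the positivity of exponents in face elements together with the vanishing of $\tau_{i}$ restricted away from $M_{F_{i}}$ when $F\nsubseteq E$, and (iii) the local model of Corollary \ref{local model for faces} to identify the two pullbacks of classes from $H^*(E)$ when $F_\bullet\subseteq F\subseteq E$. Your reorganized case split and the explicit long-exact-sequence justification of $\tau_{i_j}|_{M_{\mrF}\cap X_{\mrF_\bullet}}=0$ are only cosmetic refinements of the paper's argument.
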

\begin{proof}
Let $\beta=\Big(\bigoplus\limits_{F\in\mcF(1)}(h^1_{\mrF}-h^2_{\mrF})\Big)(\widehat{\alpha})$. Then for any $F\in\mcF(1)$, we have 
\[
\beta_F=h^1_{\mrF}(\widehat{\alpha}_F)-h^2_{\mrF}(\widehat{\alpha}_{F_\bullet})=h^1_{\mrF}(\alpha_F)-h^2_{\mrF}\Big(\pi_{\mrF_\bullet}^*(\alpha_{F_\bullet})\Big)\]
where $\alpha_F,\beta_F$ and $\widehat{\alpha}_F$ denote the $F$-component of $\alpha$, \red{$\beta$} and $\widehat{\alpha}$ respectively.

{\bf{Case 1}:} $F_\bullet\nsubseteq E$. Then by the definition of face element (see Definition \ref{def of F-face elements for new definition}) we have $\alpha_{F_\bullet}=0,$ which means that 
\[
h^2_{\mrF}(\widehat{\alpha}_{F_\bullet})=h^2_{\mrF}\Big(\pi^*_\mrF(\alpha_{F_\bullet})\Big)=0.\]
Since $F_\bullet$ is a minimal face of $Q$, we have $F_\bullet\cap E=\emptyset.$ If $F$ is a face of $E$, then $F\cap F_\bullet=\emptyset$ which implies that $h^1_{\mrF}=0$. Hence $h^1_{\mrF}(\alpha_F)=0$. If $F$ is not a face of $E$, then by the definition of face element, we have $\alpha_F=0$. Also we obtain that 
\[h^1_{\mrF}(\alpha_F)=0.\]
Therefore for this case we have 
\[
\beta_F=h^1_{\mrF}(\alpha_F)-h^2_{\mrF}(\alpha_{F_\bullet})=0.
\]

{\bf{Case 2}:} $F_\bullet\subseteq E$. 

{\bf{Case 2-1}:}
$F$ is not a face of $E$. Then we have $\alpha_F=0$
by the definition of $E$-face elements. Hence we have
\[
h^1_{\mrF}(\alpha_F)=0.
\]
Since $\alpha$ is an $E$-face element, we have
$\alpha_E=\sum\limits_{\mathbf{m}}\overline{q}^*(a_{\mathbf{m},E})\Big(\tau(M_{\mathring{E}})\Big)^{\mathbf{m}}$ where $\mathbf{m}\in\N_{>0}^k$ and \red{$a_{\mathbf{m},E}\in H^*(E)$}.   Hence $\alpha_{F_\bullet}=\sum\limits_{\mathbf{m}}\overline{q}^*(a_{\mathbf{m},E}|_{F_\bullet})\Big(\tau(M_{\mrF_\bullet})\Big)^{\mathbf{m}}$ and $\widehat{\alpha}_{F_\bullet}=\sum\limits_{\mathbf{m}}\overline{q}^*(a_{\mathbf{m},E}|_{F_\bullet})\Big(\tau(X_{\mrF_\bullet})\Big)^{\mathbf{m}}$.
Since $F$ is not a face of $E$, there exists $i\in\Psi(E)\setminus\Psi(F)\subseteq\Psi(F_\bullet)$. This shows that $i\notin\Psi(F)\cap\Psi(F_\bullet)$. Then by the definition of $h^2_{\mrF}$, we have
$h^2_{\mrF}\Big(\tau_i(X_\mrF)\Big)=0$. Therefore for any $\mathbf{m}\in\N_{>0}^k$, we have 
$
h^2_{\mrF}\bigg(\Big(\tau(X_\mrF)\Big)^{\mathbf{m}}\bigg)=0
$ which implies that 
\[
h^2_{\mrF}(\widehat{\alpha}_{F_\bullet})=0.
\]
In this case we have 
\[
\beta_F=h^1_{\mrF}(\alpha_F)-h^2_{\mrF}(\widehat{\alpha}_{F_\bullet})=0.\]

{\bf{Case 2-2}:} $F$ is a face of $E$ but $F_\bullet\nsubseteq F$. Since $F_\bullet$ is a minimal face, $F_\bullet\cap F=\emptyset,$ which means that $h^1_{\mrF}=0$ and $h^2_{\mrF}=0$. Therefore for this case we have 
\[
\beta_F=h^1_{\mrF}(\alpha_F)-h^2_{\mrF}(\widehat{\alpha}_{F_\bullet})=0.\]

{\bf{Case 2-3}:} $F$ is a face of $E$ and $F_\bullet\subseteq F$. Set $\alpha_E=\sum\limits_{\mathbf{m}}\overline{q}^*(a_{\mathbf{m},E})\Big(\tau(M_{\mathring{E}})\Big)^{\mathbf{m}}$ satisfying $\mathbf{m}\in\N_{>0}^k$. Then  
$\alpha_F=\sum\limits_{\mathbf{m}}\overline{q}^*(a_{\mathbf{m},E}|_F)\Big(\tau(M_{\mathring{F}})\Big)^{\mathbf{m}},$ where $\overline{q}^*$
is induced by the composition map 
\[ET\times_T M_\mrF\to\mrF\to F.\]
 Hence
\[
h^1_{\mrF}(\alpha_F)=\sum\limits_{\mathbf{m}}\overline{q}^*\Big(a_{\mathbf{m},E}\Big|_{(M_F\cap X_{F_\bullet})/T}\Big)\Big(\tau(M_{\mathring{F}})\Big|_{M(1)_{\mrF}\cap X_{\mrF_{\bullet}}}\Big)^{\mathbf{m}},\] 
where $\overline{q}^*$, by abusing notations, is induced by the map 
\[ET\times_T(M_\mrF\cap X_{\mrF_\bullet})\to (M_\mrF\cap X_{\mrF_\bullet})/T\to (M_F\cap X_{F_\bullet})/T.\]

By the same argument we have
$
\alpha_{F_\bullet}=\sum\limits_{\mathbf{m}}\overline{q}^*(a_{\mathbf{m},E}|_{F_\bullet})\Big(\tau(M_{\mrF_\bullet})\Big)^{\mathbf{m}}
$ and
\[
h^2_{\mrF}(\widehat{\alpha}_{F_\bullet})=\sum\limits_{\mathbf{m}}\overline{q}^*\Big(\mathbf{P}^*(a_{\mathbf{m},E}|_{F_\bullet})\Big)\left(\tau(X_{\mrF_\bullet})\Big|_{M_\mrF\cap X_{\mrF_\bullet}}\right)^{\mathbf{m}}
.\]
Here $\mathbf{P}^*$ is induced by the composition map  
\[\mathbf{P}: (M(1)_F\cap X_{F_\bullet})/T\to X_{F_\bullet}/T\xrightarrow{\overline{\pi_{F_\bullet}}} M_{F_\bullet}/T=F_\bullet\] where $\overline{\pi_{F_\bullet}}: X_{F_\bullet}/T\to M_{F_\bullet}/T=F_\bullet$ induced by $\pi_{F_\bullet}: X_{F_\bullet}\to M_{F_\bullet}$
and $\overline{q}^*$, by abusing notations, is induced by the map 
\[ET\times_T(M_\mrF\cap X_{\mrF_\bullet})\to (M_\mrF\cap X_{\mrF_\bullet})/T\to (M_F\cap X_{F_\bullet})/T.\]
Since $(M_F\cap X_{F_\bullet})/T$ is a face of nice manifolds with corner $X_{F_\bullet}/T$. By Corollary \ref{local model for faces},  the map 
\[(M_F\cap X_{F_\bullet})/T\xrightarrow{\mathbf{P}} F_\bullet\to (M_F\cap X_{F_\bullet})/T,\]
where $F_\bullet\to (M_F\cap X_{F_\bullet})/T$ is the inclusion map, is homotopy equivalent to the identity map 
\[(M_F\cap X_{F_\bullet})/T\to (M_F\cap X_{F_\bullet})/T.\]
This implies that  $\mathbf{P}^*(a_{\mathbf{m},E}|_{F_\bullet})=a_{\mathbf{m},E}\Big|_{(M_F\cap X_{F_\bullet})/T}.$ Therefore we have 
\[
h^2_{\mrF}(\widehat{\alpha}_{F_\bullet})=\sum\limits_{\mathbf{m}}\overline{q}^*\Big(a_{\mathbf{m},E}\Big|_{(M_F\cap X_{F_\bullet})/T}\Big)\left(\tau(X_{\mrF_\bullet})\Big|_{M_\mrF\cap X_{\mrF_\bullet}}\right)^{\mathbf{m}}
.\]
Then we have 
\[
\beta_F=h^1_{\mrF}(\alpha_F)-h^2_{\mrF}(\widehat{\alpha}_{F_\bullet})=0\]
for this case.

The above arguments show that
$\beta=0$ which implies this lemma.
\end{proof}

\vspace{2mm}
\begin{lemma}\label{one key lemma}
Let $\bigoplus\limits_{F\in\mcF} r_{\mrF}: H^*_T(M)\to\bigoplus\limits_{F\in\mcF}H_T^*(M_{\mrF})$ be the restriction map induced by inclusion maps. Then
\[\mathbf{k}[M/T]\subseteq \text{Im}(\bigoplus\limits_{F\in\mcF} r_{\mrF})
.\]
\end{lemma}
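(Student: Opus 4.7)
The plan is to prove $\mathbf{k}[M/T] \subseteq \mathrm{Im}(\bigoplus_{F\in\mcF} r_{\mrF})$ by induction on $^\sharp\mcF$, using the Mayer--Vietoris short exact sequence (\ref{split into short exact sequence}) together with Corollary \ref{H*T(M) as ker cap Im}. Since $\mathrm{Im}(\bigoplus r_{\mrF})$ is a subring of $\bigoplus_{F\in\mcF} H^*_T(M_{\mrF})$ and $\mathbf{k}[M/T]$ is generated as a $\mathbf{k}$-module by the $E$-face elements over $E\in\mcF$, it suffices to lift each such face element $\alpha$ to a class $\mu\in H^*_T(M)$. The base case $^\sharp\mcF = 1$ is immediate, since then $M=M_{\mathring{Q}}$ and $r_{\mathring{Q}}$ is the identity.

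For the inductive step, fix a minimal face $F_\bullet$ of $Q$ and put $M(1) = M\setminus M_{\mrF_\bullet}$, whose orbit space has strictly fewer faces by Lemma \ref{sharp F(1)=sharp F-1}. Given an $E$-face element $\alpha$, I would build $\mu$ in three stages. First, define the ``restriction'' $\alpha^{(1)} \in \bigoplus_{F\in\mcF(1)} H^*_T(M(1)_{\mrF})$ by $\alpha^{(1)}_F = \alpha_{\widetilde F}$, where $\widetilde F \in \mcF$ is the face of $Q$ sharing its open face with $F$, and check that $\alpha^{(1)}$ is either an $E^{(1)}$-face element of $\mathbf{k}[M(1)/T]$ (when $E\neq F_\bullet$, where $E^{(1)}$ is the face of $Q(1)$ with open face $\mathring{E}$), or the zero element (when $E = F_\bullet$, since then $\alpha_F = 0$ for $F\neq F_\bullet$). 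By the induction hypothesis applied to $M(1)$, choose $\gamma\in H^*_T(M(1))$ with $\bigoplus_{F\in\mcF(1)} r_{\mrF}(\gamma) = \alpha^{(1)}$. Second, set $\delta := \pi_{\mrF_\bullet}^*(\alpha_{F_\bullet})\in H^*_T(X_{\mrF_\bullet})$. Third, verify that the pair $(\gamma,\delta)$ lies in the kernel $\mathcal{R}$ of $r_1 - \kappa_{\mrF_\bullet}$; by the short exact sequence (\ref{split into short exact sequence}) this produces a $\mu\in H^*_T(M)$, and by tracing through the commutative diagram (\ref{commutative diagram for HT*(M) and HT*(M(1)) and HT*(XF)}) one sees $\bigoplus r_{\mrF}(\mu) = \alpha$.

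The compatibility condition in the third stage is the key computational point, and it is where Lemma \ref{E-face element in kernel} is used. By Lemma \ref{injection of r} the equality $r_1(\gamma) = \kappa_{\mrF_\bullet}(\delta)$ may be tested after further restriction to each piece $M(1)_{\mrF}\cap X_{\mrF_\bullet}$, and the diagram (\ref{commutative diagram for r(1)xF-kappa F}) reduces this to the identity $h^1_{\mrF}(\alpha^{(1)}_F) = h^2_{\mrF}(\delta)$ for all $F\in\mcF(1)$. This is exactly the content of Lemma \ref{E-face element in kernel} applied to the $E$-face element $\alpha$.

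The main obstacle I anticipate is the verification in the first stage that $\alpha^{(1)}$ genuinely lies in $\mathbf{k}[M(1)/T]$. The $\tau$-parts transport immediately through the identification $M_{\mrF} = M(1)_{\mrF}$, and the coefficients $a_{\mathbf{m},E}\in H^*(E)$ transport via the restriction $H^*(E)\to H^*(E\setminus F_\bullet)$ induced by the inclusion. One must then carefully check that the closed-face containment used to define $\phi_{FE}$ (equation (\ref{def of phiFE for new definition})) is preserved when one replaces each $F\in\mcF$ containing $F_\bullet$ by $F\setminus F_\bullet\in\mcF(1)$, i.e.\ that $G\subseteq E$ in $\mcF$ if and only if $G\setminus F_\bullet\subseteq E\setminus F_\bullet$ in $\mcF(1)$. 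Minimality of $F_\bullet$ and the fact that $F_\bullet$ lies in $F$ precisely when $F_\bullet$ meets $\overline{\mathring{F}}$ make this equivalence routine, after which both defining conditions of Definition \ref{def of F-face elements for new definition} transport directly.
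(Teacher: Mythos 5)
Your proposal is correct and follows essentially the same route as the paper's proof: induction on the number of faces, removal of a minimal face $F_\bullet$, the Mayer--Vietoris short exact sequence (\ref{split into short exact sequence}), and the combination of Lemma \ref{injection of r}, diagram (\ref{commutative diagram for r(1)xF-kappa F}) and Lemma \ref{E-face element in kernel} to lift the pair $\Big(\gamma,\pi_{\mrF_\bullet}^*(\alpha_{F_\bullet})\Big)$ to a class in $H^*_T(M)$ restricting to $\alpha$. The only cosmetic differences are that you treat all faces $E$ uniformly (the paper dispatches the case $F_\bullet\nsubseteq E$ separately and more quickly), and that you make explicit the routine verification, left implicit in the paper, that the truncated element $\alpha^{(1)}$ indeed lies in $\mathbf{k}[M(1)/T]$.
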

\begin{proof}
We prove this lemma by induction on $^\sharp\mcF$, where $\mcF$ is the set of faces of $Q=M/T$. When $^\sharp\mcF=1$, $M=\mathring{M}.$ Hence we have 
\[
H^*_T(M)=H^*_T(M_{\mathring{Q}})=\mathbf{k}[Q].\]
Therefore this lemma holds for $^\sharp\mcF=1$.

Assume that $^\sharp\mcF\ge 2$. Let $F_{\bullet}$ be a minimal face of $Q=M/T$ and $M(1):=M\setminus M_{\mrF_{\bullet}}$ Then by Lemma \ref{sharp F(1)=sharp F-1}, we have $^\sharp\mcF(1)\le^\sharp\mcF-1$, where 
$
\mcF(1)~\text{is the set of faces of $Q(1):=M(1)/T.$}$ Hence  by induction hypothesis we have 
\[
\mathbf{k}[Q(1)]\subseteq\text{Im}(\bigoplus\limits_{F\in\mcF(1)} r_{\mrF}).
\]
Let $E$ be a face of $Q$ and $\alpha$ be $E$-face element in $\bigoplus\limits_{F\in\mcF} H^*_T(M_
\mrF)$ so it suffices to show that $\alpha\in\text{Im}(\bigoplus\limits_{F\in\mcF} r_{\mrF})$ 

{\bf{Case 1:}} $F_\bullet\nsubseteq E.$ In this case $\alpha_{F_\bullet}=0.$ Hence we can view $\alpha$ as an element in of $\mathbf{k}[Q(1)].$ 
By induction hypothesis we have 
\[\alpha\in\text{Im}(\bigoplus\limits_{F\in\mcF(1)} r_{\mrF})\subseteq \text{Im}(\bigoplus\limits_{F\in\mcF} r_{\mrF}),
\]
where the last inclusion follows from the fact that $M(1)_\mrF=M_\mrF$ for $F\in\mcF(1).$

{\bf{Case 2:}} $F_\bullet\subseteq E.$ 
Let $\widehat{\alpha}$ be the image of $\alpha$ through the isomorphism
\[\Big(\bigoplus\limits_{F\in\mcF(1)}H^i_T(M(1)_{\mrF})\Big)\bigoplus H^i_T(M_{\mrF_\bullet})\to \Big(\bigoplus\limits_{F\in\mcF(1)}H^i_T(M(1)_{\mrF})\Big)\bigoplus H^i_T(X_{\mrF_\bullet})\] 
and $\alpha(1)$ be the element in $\mathbf{k}[Q(1)]$ by removing the $F_\bullet$-component of $\alpha$, so we express $\widehat{\alpha}$ as $\Big(\alpha(1),\pi^*_{\mrF_\bullet}(\alpha_{F_\bullet})\Big).$ By induction hypothesis, there is an element $\gamma\in H^*_T(M(1))$, such that 
\begin{equation}\label{exists gamma in M(1) such that}
\Big(\bigoplus\limits_{F\in\mcF(1)} r_{\mrF}\Big)(\gamma)=\alpha(1).
\end{equation}
Since $\Big(\bigoplus\limits_{F\in\mcF(1)}(h^1_{\mrF}-h^2_{\mrF})\Big)\Big(\alpha(1),\pi^*_{\mrF_\bullet}(\alpha_{F_\bullet})\Big)
=0
$
by Lemma \ref{E-face element in kernel}, we have 
\[r\circ(r_1-\kappa_{\mrF_\bullet})\Big(\gamma,\pi^*_{\mrF_\bullet}(\alpha_{F_\bullet})\Big)=0\]
which follows from commutative diagram (\ref{commutative diagram for r(1)xF-kappa F}). $r$ is injective by Lemma \ref{injection of r}, so 
\[(r_1-\kappa_{\mrF_\bullet})\Big(\gamma,\pi^*_{\mrF_\bullet}(\alpha_{F_\bullet})\Big)=0.\]
By short exact sequence \ref{split into short exact sequence} we have 
\[\Big(\gamma,\pi^*_{\mrF_\bullet}(\alpha_{F_\bullet})\Big)\in\text{Im}\Big(H^*_T(M)\to H^*_T(M(1))\oplus H^*_T(X_{\mrF_\bullet})\Big).\]
Applying commutative diagram
(\ref{commutative diagram for HT*(M) and HT*(M(1)) and HT*(XF)}), we have
\[\left(\Big(\bigoplus\limits_{F\in\mcF(1)} r_{\mrF}\Big)(\gamma),(\pi^*_{\mrF_\bullet})^{-1}\Big(\pi^*_{\mrF_\bullet}(\alpha_{F_\bullet})\Big)\right)\in\text{Im}(\bigoplus\limits_{F\in\mcF}r_{\mrF}).
\]
By (\ref{exists gamma in M(1) such that}), we have $(\alpha(1),\alpha_{F_\bullet})\in \text{Im}(\bigoplus\limits_{F\in\mcF}r_{\mrF}).$ Namely $\alpha\in\text{Im}(\bigoplus\limits_{F\in\mcF}r_{\mrF}).$

Summarizing the above arguments, for any face element $\alpha$ we have  
\[\alpha\in\text{Im}(\bigoplus_{F\in\mcF}r_{\mrF}),\]
so lemma follows from the fact that $\mathbf{k}[M/T]$ is generated by face elements.
\end{proof} 

\vspace{2mm}
\begin{lemma}\label{another key lemma}
Let $\bigoplus\limits_{F\in\mcF} r_{\mrF}: H^*_T(M)\to\bigoplus\limits_{F\in\mcF}H_T^*(M_{\mrF})$ be the restriction map induced by inclusion maps. Then
\[
\text{Im}(\bigoplus\limits_{F\in\mcF} r_{\mrF})\subseteq \mathbf{k}[M/T].
\]
\end{lemma}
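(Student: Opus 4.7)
The proof goes by induction on $^\sharp\mcF$, in parallel with Lemma \ref{one key lemma} but in the opposite direction. The base case $^\sharp\mcF=1$ is immediate: $M=M_{\mathring Q}$, the restriction map is the identity of $H^*_T(M_{\mathring Q})$, and by Definition \ref{topological face ring for new definition} this coincides with $\mathbf{k}[M/T]$.

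For the inductive step, fix a minimal face $F_\bullet\in\mcF$ and set $M(1):=M\setminus M_{\mrF_\bullet}$; then $^\sharp\mcF(1)\le{}^\sharp\mcF-1$ by Lemma \ref{sharp F(1)=sharp F-1}, so the inductive hypothesis gives $\text{Im}(\bigoplus_{F\in\mcF(1)}r_\mrF)\subseteq\mathbf{k}[Q(1)]$. Given $\eta\in H^*_T(M)$, the short exact sequence (\ref{split into short exact sequence}) records $\eta$ as a pair $(\eta_1,\eta_2)\in H^*_T(M(1))\oplus H^*_T(X_{\mrF_\bullet})$ with $r_1(\eta_1)=\kappa_{\mrF_\bullet}(\eta_2)$. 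Since $M_\mrF\subseteq M(1)$ for every $F\ne F_\bullet$, the tuple $(r_\mrF(\eta))_{F\ne F_\bullet}=(r_\mrF(\eta_1))_{F\in\mcF(1)}$ lies in $\mathbf{k}[Q(1)]$, so I may write it as a finite sum $\sum_i\gamma^{(i)}$ where each $\gamma^{(i)}$ is an $F^{(i)}$-face element of $Q(1)$.

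Next, I would extend every $\gamma^{(i)}$ to a $Q$-face element $\widetilde\gamma^{(i)}$: the $F$-components for $F\ne F_\bullet$ remain unchanged, while the $F_\bullet$-component is defined to be $\phi_{F^{(i)}F_\bullet}(\gamma^{(i)}_{F^{(i)}})$, which vanishes unless $F_\bullet\subseteq F^{(i)}$. Define the residue
\[
\delta:=r_{\mrF_\bullet}(\eta)-\sum_i\widetilde\gamma^{(i)}_{F_\bullet}\in H^*_T(M_{\mrF_\bullet})\cong H^*(F_\bullet)[\mathbf{x}_{F_\bullet}].
\]
The core claim is that $\delta$ is itself an $F_\bullet$-face element, i.e.\ in the expansion $\delta=\sum_\mathbf{m}\overline{q}^*(a_\mathbf{m})\tau(M_{\mrF_\bullet})^\mathbf{m}$ every $\mathbf{m}$ lies in $\N_{>0}^{|\Psi(F_\bullet)|}$. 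Granting this claim, one has $\bigoplus_F r_\mrF(\eta)=\sum_i\widetilde\gamma^{(i)}+(\text{the }F_\bullet\text{-face element with }F_\bullet\text{-component }\delta)$, exhibiting the image as a sum of $Q$-face elements and hence an element of $\mathbf{k}[M/T]$, closing the induction.

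The main obstacle is verifying the claim about $\delta$. My plan is to exploit the Mayer-Vietoris compatibility $r_1(\eta_1)=\kappa_{\mrF_\bullet}(\eta_2)$ through commutative diagram (\ref{commutative diagram for r(1)xF-kappa F}) together with the injectivity established in Lemma \ref{injection of r}. Restricted to each factor $M(1)_\mrF\cap X_{\mrF_\bullet}$ with $F_\bullet\subsetneq F$, the map $h^2_\mrF$ annihilates the classes $\tau_i(M_{\mrF_\bullet})$ with $i\in\Psi(F_\bullet)\setminus\Psi(F)$ (exactly as in Case 2-1 of the proof of Lemma \ref{E-face element in kernel}), so $h^2_\mrF(r_{\mrF_\bullet}(\eta))$ extracts precisely the monomials of $r_{\mrF_\bullet}(\eta)$ whose support in $\Psi(F_\bullet)$ is contained in $\Psi(F)$. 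The induction hypothesis identifies these with $\sum_i h^1_\mrF(\gamma^{(i)}_F)=\sum_i h^2_\mrF(\pi_{\mrF_\bullet}^*\widetilde\gamma^{(i)}_{F_\bullet})$ via the same case analysis. Consequently all monomials of $\delta$ whose support in $\Psi(F_\bullet)$ is a proper subset cancel out; only those in which every $\tau_i$, $i\in\Psi(F_\bullet)$, appears to strictly positive power survive, which is exactly the defining condition of an $F_\bullet$-face element.
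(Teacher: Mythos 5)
Your proposal is correct and, in structure, it is the paper's proof: induction on $^\sharp\mcF$, removal of a minimal face $F_\bullet$, decomposition of the restriction to $M(1)$ into face elements via the induction hypothesis, extension of each to a $Q$-face element by adjoining the $F_\bullet$-component $\phi_{FF_\bullet}(\,\cdot\,)$, and reduction to showing the residue $\delta$ at $F_\bullet$ is an $F_\bullet$-face element using diagram (\ref{commutative diagram for r(1)xF-kappa F}) and Lemma \ref{E-face element in kernel}. The only real divergence is the final step. From $h^2_{\mrF}(\pi^*_{\mrF_\bullet}\delta)=0$ for all $F\supsetneq F_\bullet$ the paper concludes, via Lemma \ref{injection of r}, that $\kappa_{\mrF_\bullet}(\pi^*_{\mrF_\bullet}\delta)=0$, and then invokes Lemma \ref{kernel of kappa F generated by Thom class}: the kernel of $\kappa_{\mrF_\bullet}$ is the ideal $\bigl(\tau_{F_\bullet}(X_{\mrF_\bullet})\bigr)=\bigl(\pi^*_{\mrF_\bullet}\prod_j\tau_{i_j}(M_{\mrF_\bullet})\bigr)$, whose elements are exactly the polynomials all of whose monomials have full support; this is precisely the $F_\bullet$-face condition. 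Your ``extraction'' argument reaches the same conclusion but leaves implicit the two facts that make it work: (i) every proper subset $S\subsetneq\Psi(F_\bullet)$ is realized as $\Psi(F)$ for some face $F\supsetneq F_\bullet$, which comes from the collar structure $X_{F_\bullet}/T\cong F_\bullet\times\RR^k_{\ge 0}$ (Corollary \ref{local model for faces}); and (ii) on monomials supported in $\Psi(F)$ the map $h^2_{\mrF}\circ\pi^*_{\mrF_\bullet}$ is injective, because $H^*_T(M(1)_{\mrF}\cap X_{\mrF_\bullet})$ is a polynomial ring over $H^*\bigl((M_F\cap X_{F_\bullet})/T\bigr)$ on the restricted classes $\tau_i$, $i\in\Psi(F)$, and $(M_F\cap X_{F_\bullet})/T$ is homotopy equivalent to $F_\bullet$ (Proposition \ref{HF[x] to HTmrF} applied to $X_{\mrF_\bullet}$ together with Proposition \ref{one important observation}). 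With (i) and (ii) spelled out your monomial-support argument closes; citing Lemma \ref{kernel of kappa F generated by Thom class} instead gives the same conclusion in one stroke. One genuine (minor) economy in your version: by applying Lemma \ref{E-face element in kernel} directly to the extensions $\widetilde\gamma^{(i)}$ you avoid the paper's appeal to Lemma \ref{one key lemma} and to exactness of (\ref{split into short exact sequence}) at that point; both routes are valid.
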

\begin{proof}
We prove this lemma by induction on $^\sharp\mcF$, where $\mcF$ is the set of faces of $Q=M/T$. When $^\sharp\mcF=1$, then $M=\mathring{M}.$ Hence we have 
\[
H^*_T(M)=H^*_T(M_{\mathring{Q}})=\mathbf{k}[Q]. \]
Therefore this lemma holds for $^\sharp\mcF=1$.

Assume that $^\sharp\mcF\ge 2$. Let $F_{\bullet}$ be a minimal face of $Q=M/T$ and $M(1):=M\setminus M_{\mrF_{\bullet}}.$ Then by Lemma \ref{sharp F(1)=sharp F-1}, we have $^\sharp\mcF(1)\le {^\sharp\mcF}-1$, where $
\mcF(1)~\text{is the set of faces of $Q(1):=M(1)/T.$}$ Hence  by induction hypothesis we have 
\[
\text{Im}(\bigoplus\limits_{F\in\mcF(1)} r_{\mrF})\subseteq\mathbf{k}[Q(1)].
\]
This inclusion implies that
\[
\text{Im}\Big((\bigoplus\limits_{F\in\mcF(1)} r_{\mrF})\bigoplus id\Big)\subseteq \mathbf{k}[Q(1)]\bigoplus H^*_T(X_{\mrF_\bullet})
,\]
where  
\[(\bigoplus\limits_{F\in\mcF(1)} r_{\mrF})\bigoplus id: H^i_T(M(1))\oplus H^i_T(X_{\mrF_\bullet})\to \Big(\bigoplus\limits_{F\in\mcF(1)}H^i_T(M(1)_{\mrF})\Big)\bigoplus H^i_T(X_{\mrF_\bullet}).\] 
Let $\alpha\in\text{Im}(\bigoplus\limits_{F\in\mcF} r_{\mrF})\subseteq\bigoplus\limits_{F\in\mcF}H^*_T(M_\mrF)$ and $\alpha(1)$ be the element in $\text{Im}(\bigoplus\limits_{F\in\mcF(1)} r_{\mrF})$ by removing the $F_\bullet$-component of $\alpha$, where $\bigoplus\limits_{F\in\mcF(1)} r_{\mrF}: H^*_T(M(1))\to\Big(\bigoplus\limits_{F\in\mcF(1)}H^*_T(M(1)_{\mrF})\Big).$ Therefore we can express $\alpha$ as $\alpha=\Big(\alpha(1),\alpha_{F_\bullet}\Big).$
Then by induction hypothesis we have 
\[\alpha(1)\in\mathbf{k}[Q(1)].\]
Set \[\alpha(1)=\sum\limits_{F\in\mcF(1)}\theta(F),\] where $\theta(F)$ is a $F$-face element in $\Big(\bigoplus\limits_{F\in\mcF(1)}H^*_T(M(1)_{\mrF})\Big).$ Note that we can extend $\theta(F)$ to an $F$-face element $\widetilde{\theta(F)}$ in $\mathbf{k}[Q]$ uniquely as follows. 
\[
\widetilde{\theta(F)}=\bigg(\theta(F),\phi_{FF_\bullet}\Big(\theta(F)_F\Big)\bigg)
\]
where $\theta(F)_F$ is the $F$-component of $\theta(F)$ and $\phi_{FF_\bullet}$ is given in (\ref{def of phiFE for new definition}). Therefore we have
\begin{equation}\label{alpha(1),alphabullet}
\begin{split}
\Big(\alpha(1),\alpha_{F_\bullet}\Big)-\sum\limits_{\mrF\in\mathring{\mcF}(1)}\widetilde{\theta(F)}&=(\alpha(1),\alpha_{F_\bullet})-\bigg(\sum\limits_{F\in\mcF(1)}\theta(F),\sum\limits_{F\in\mcF(1)}\phi_{FF_\bullet}\Big(\theta(F)_F\Big)\bigg)\\
&=\bigg(0,\alpha_{F_\bullet}-\sum\limits_{F\in\mcF(1)}\phi_{FF_\bullet}\Big(\theta(F)_F\Big)\bigg).\\
\end{split}
\end{equation}
Lemma \ref{one key lemma} implies that 
$
\sum\limits_{\mrF\in\mathring{\mcF}(1)}\widetilde{\theta(F)}\in\text{Im}(\bigoplus\limits_{F\in\mcF}r_\mrF),
$ so we have 
\[
\bigg(0,\alpha_{F_\bullet}-\sum\limits_{F\in\mcF(1)}\phi_{FF_\bullet}\Big(\theta(F)_F\Big)\bigg)\in\text{Im}(\bigoplus\limits_{F\in\mcF}r_\mrF).\]
Considering the following commutative diagram which combines commutative diagrams (\ref{commutative diagram for HT*(M) and HT*(M(1)) and HT*(XF)}) and (\ref{commutative diagram for r(1)xF-kappa F})
\begin{center}
\begin{tikzcd}
 H^*_T(M)   \arrow[d, "{\bigoplus\limits_{F\in\mcF} r_{\mrF}}"]\arrow[r, "{}"]  &  H^*_T(M(1))\oplus H^*_T(X_{\mrF_\bullet})  \arrow[d, "{(\bigoplus\limits_{F\in\mcF(1)} r_{\mrF})\bigoplus id}"]\arrow[rr, "{r_1-\kappa_{\mrF_\bullet}}"]  && H^*_T(X_{\mrF_\bullet}\setminus M_{\mrF_\bullet})\arrow[d, "{r}"]\\
\bigoplus\limits_{F\in\mcF}H^{*}_T(M_{\mrF})    \arrow[r, "{}"]& \Big(\bigoplus\limits_{F\in\mcF(1)}H^*_T(M(1)_{\mrF})\Big)\bigoplus H^*_T(X_{\mrF_\bullet}) \arrow[rr, "{\bigoplus\limits_{F\in\mcF(1)}(h^1_\mrF-h^2_\mrF)}"]&& \bigoplus\limits_{F\in\mcF(1)}H^*_T(M(1)_{\mrF}\cap X_{\mrF_{\bullet}}),\end{tikzcd}
\end{center}
we have 
\[\left(0,\pi_{\mrF_\bullet}^*\bigg(\alpha_{F_\bullet}-\sum\limits_{F\in\mcF(1)}\phi_{FF_\bullet}\Big(\theta(F)_F\Big)\bigg)\right)\in\ker\bigoplus\limits_{F\in\mcF(1)}(h^1_\mrF-h^2_\mrF),\]
since the upper row in this commutative diagram is exact. Therefore we have 
\[
(\bigoplus_{F\in\mcF(1)}h^2_{\mrF})\left(\pi_{\mrF_\bullet}^*\bigg(\alpha_{F_\bullet}-\sum\limits_{F\in\mcF(1)}\phi_{FF_\bullet}\Big(\theta(F)_F\Big)\bigg)\right)=0,
\]
which means 
\[
\left(\pi_{\mrF_\bullet}^*\bigg(\alpha_{F_\bullet}-\sum\limits_{F\in\mcF(1)}\phi_{FF_\bullet}\Big(\theta(F)_F\Big)\bigg)\right)\Bigg|_{X_{\mrF_\bullet}\setminus M_{\mrF_\bullet}}=0.
\]
By Lemma \ref{kernel of kappa F generated by Thom class}, we have 
\[
\left(\pi_{\mrF_\bullet}^*\bigg(\alpha_{F_\bullet}-\sum\limits_{F\in\mcF(1)}\phi_{FF_\bullet}\Big(\theta(F)_F\Big)\bigg)\right)\in\Big(\tau_{F_\bullet}(X_{\mrF_\bullet})\Big),
\]
where $\Big(\tau_{F_\bullet}(X_{\mrF_\bullet})\Big)$ is the ideal, generated by $\tau_{F_\bullet}(X_{\mrF_\bullet})$, of $H^*_T(X_{\mrF_\bullet}).$ Hence we have 
\[\alpha_{F_\bullet}-\sum\limits_{F\in\mcF(1)}\phi_{FF_\bullet}\Big(\theta(F)_F\Big)=\Big(\tau_{F_\bullet}(M_{\mrF_\bullet})\Big)\]
which shows that $\bigg(0,\alpha_{F_\bullet}-\sum\limits_{F\in\mcF(1)}\phi_{FF_\bullet}\Big(\theta(F)_F\Big)\bigg)$ is an $F_\bullet$-face element. This implies that  $\alpha\in\mathbf{k}[M/T]$ by (\ref{alpha(1),alphabullet}), so $\text{Im}(\bigoplus\limits_{F\in\mcF} r_{\mrF})\subseteq \mathbf{k}[M/T].$
\end{proof}

\vspace{2mm}
\red{\begin{theo}\label{main theorem}
Let $M$ be a $2n$-dimensional open torus manifold with locally standard action. Then 
\[\bigoplus\limits_{F\in\mcF}r_\mrF: H^*_T(M)\to \mathbf{k}[M/T]\]
is an isomorphism, where $\mcF$ is the set of faces of $M/T.$
\end{theo}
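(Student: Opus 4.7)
The plan is to obtain Theorem \ref{main theorem} as an immediate consequence of the three main results already proven in this section, which between them do all the real work. First, I would invoke Proposition \ref{injective observation} to conclude that the restriction map $\bigoplus_{F\in\mcF} r_\mrF : H^*_T(M) \to \bigoplus_{F\in\mcF} H^*_T(M_\mrF)$ is injective, so $H^*_T(M)$ is isomorphic via $\bigoplus_{F\in\mcF} r_\mrF$ onto its image inside $\bigoplus_{F\in\mcF} H^*_T(M_\mrF)$.

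Next I would identify that image with the topological face ring. Lemma \ref{one key lemma} supplies the inclusion $\mathbf{k}[M/T] \subseteq \mathrm{Im}(\bigoplus_{F\in\mcF} r_\mrF)$, while Lemma \ref{another key lemma} supplies the reverse inclusion $\mathrm{Im}(\bigoplus_{F\in\mcF} r_\mrF) \subseteq \mathbf{k}[M/T]$. Together they give $\mathrm{Im}(\bigoplus_{F\in\mcF} r_\mrF) = \mathbf{k}[M/T]$ as subrings of $\bigoplus_{F\in\mcF} H^*_T(M_\mrF)$. Restricting the codomain of the injective map to this common image yields the asserted isomorphism $\bigoplus_{F\in\mcF} r_\mrF : H^*_T(M) \xrightarrow{\cong} \mathbf{k}[M/T]$.

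At the level of this theorem statement the proof is a one-line assembly and there is no remaining obstacle; all the genuine difficulty was absorbed into the earlier inductive arguments. In particular, the hard parts were the induction on $^\sharp\mcF$ using the decomposition $M = M(1) \cup X_{\mrF_\bullet}$ at a minimal face $F_\bullet$, the associated Mayer--Vietoris short exact sequence (\ref{split into short exact sequence}) (which required the surjectivity of $\kappa_{\mrF_\bullet}$ from Lemma \ref{surjection of kappa F}), the kernel computation in Lemma \ref{kernel of kappa F generated by Thom class} identifying $\ker \kappa_{\mrF_\bullet}$ with the ideal generated by the equivariant Thom class $\tau_{F_\bullet}(X_{\mrF_\bullet})$, and the explicit description of $H^*_T(M_\mrF)$ via Proposition \ref{HF[x] to HTmrF}. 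With these tools in place, the compatibility of face-element restrictions along $h^1_\mrF$ and $h^2_\mrF$ (Lemma \ref{E-face element in kernel}) is exactly what matches the Mayer--Vietoris condition with the definition of $\mathbf{k}[M/T]$, closing the argument.
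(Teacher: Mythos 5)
Your proposal is correct and coincides with the paper's own proof: both deduce the theorem immediately from the injectivity in Proposition \ref{injective observation} together with the two inclusions of Lemma \ref{one key lemma} and Lemma \ref{another key lemma}, which identify $\mathrm{Im}(\bigoplus_{F\in\mcF} r_{\mrF})$ with $\mathbf{k}[M/T]$. No gaps; the assembly is exactly as in the paper.
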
}
\begin{proof}
By Proposition \ref{injective observation}, we have 
\[H^*_T(M)\cong\text{Im}\left(\bigoplus\limits_{F\in\mcF} r_{\mrF}\right).\]
By Lemma \ref{one key lemma} and Lemma \ref{another key lemma}, we have 
\[
\text{Im}\left(\bigoplus\limits_{F\in\mcF} r_{\mrF}\right)=\mathbf{k}[M/T],
\]
so we obtain the isomorphism
\[H_T^*(M)\cong\mathbf{k}[M/T].\]
\end{proof}

\vspace{2mm}
\section{Towards to $H^*(BT)$-algebra structure of $H^*_T(M)$}\label{H(BT) algebra structure}
In this section we study the $H^*(BT)$-algebra structure of $H^*_T(M).$ Since $H^*(BT)$ is generated by $H^2(BT)$, it suffices to study $\pi^*(u)$ for $u\in H^2(BT)$, where $\pi^*$ is induced by  the projection map $\pi: ET\times_TM\to BT$ in the Borel construction. It suffices to study $
(\bigoplus\limits_{F\in\mcF} r_{\mrF})\Big(\pi^*(u)\Big)
$ because of Proposition \ref{injective observation}. Applying Theorem \ref{main theorem}, we have\[
(\bigoplus\limits_{F\in\mcF} r_{\mrF})\Big(\pi^*(u)\Big)=\sum\limits_{F\in\mcF}\theta(F)
\]
where $\theta(F)$'s are $F$-face elements in $\mathbf{k}[M/T].$ Note that $\deg(u)=2$ and $\deg(\theta(F))\ge 4$ if codimension of $F$ is larger than $1$, so we have
\[
(\bigoplus\limits_{F\in\mcF} r_{\mrF})\Big(\pi^*(u)\Big)=\theta(Q)+\sum\limits_{i=1}^m\theta(F_i)
\]
where $F_1,\dots, F_m$ are all the facets of $Q=M/T$. By the definition of face elements we have $\theta(Q)_Q\in H_T^2(M_{\mathring{Q}})\cong H^2(\mathring{Q})\cong H^2(Q)$ and $\theta(F_i)_{F_i}=a_i\tau_i(M_{\mrF_i})$, where $a_i\in\bf{k}$ and
 $\tau_i(M_{\mrF_i})=\tau_i|_{M_{\mrF_i}}$. Note that $\theta(Q)_Q$ and $a_i$'s are linearly dependent on $u$ where $i\in\{1,\cdots, m\}$, so we summarize above arguments as the following proposition. 
\begin{proposition}
Let $\{F_1,\dots, F_m\}$ be the set of facets of $M/T$.  To each $i\in\{1,\dots, m\}$, there is a unique element $v_i\in H_2(BT;\mathbf{k})$ 
such that for any $u\in H^2(BT;\mathbf{k})$ we have
\[(\bigoplus\limits_{F\in\mcF} r_{\mrF})\Big(\pi^*(u)\Big)=\theta(Q)+\sum\limits_{i=1}^m\langle v_i, u\rangle\tau_i(M_{\mrF_i}),\]
where $\theta(Q)_Q\in\text{Hom}_{\mathbf{k}}(H^2(BT), H^2(\mathring{Q}))$ and 
$\langle,\rangle$
 is the natural pairing between cohomology and homology.
\end{proposition}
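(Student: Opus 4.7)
The plan is to deduce this proposition directly from Theorem \ref{main theorem} via a degree-counting argument for face elements. By Theorem \ref{main theorem}, $(\bigoplus_{F\in\mcF} r_\mrF)(\pi^*(u))$ lies in $\mathbf{k}[M/T]$ and thus admits a unique decomposition $\sum_{F\in\mcF}\theta(F)$ with each $\theta(F)$ an $F$-face element. The first step is to observe that by Definition \ref{def of F-face elements for new definition}, the $F$-component of any nonzero $F$-face element has the form $\sum_{\mathbf{m}\in\mathbb{N}_{>0}^k}\overline{q}^*(a_{\mathbf{m},F})(\tau(M_\mrF))^{\mathbf{m}}$ where $k=\mathrm{codim}(F)$, and since every multi-index $\mathbf{m}$ has strictly positive entries, the total degree is at least $2k$. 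As $\deg(\pi^*(u))=2$, this forces $\theta(F)=0$ whenever $\mathrm{codim}(F)\ge 2$, so only the top face $Q$ and the facets $F_1,\dots,F_m$ contribute.

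Next I would identify the surviving components. For $F=Q$, since $T$ acts freely on $M_{\mathring Q}$, the Borel construction yields $\theta(Q)_Q\in H^2_T(M_{\mathring Q})\cong H^2(\mathring Q)\cong H^2(Q)$, using that $\mathring Q$ is homotopy equivalent to $Q$. For each facet $F_i$ (where $k=1$), the only way to obtain a degree-$2$ summand satisfying Definition \ref{def of F-face elements for new definition} is to take $\mathbf{m}=1$ together with a constant coefficient $a_{1,F_i}\in H^0(F_i)=\mathbf{k}$, giving $\theta(F_i)_{F_i}=a_i(u)\,\tau_i(M_{\mrF_i})$ for some scalar $a_i(u)\in\mathbf{k}$. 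By Proposition \ref{HF[x] to HTmrF}, $\tau_i(M_{\mrF_i})$ is a free generator of $H^*_T(M_{\mrF_i})$ as a module over $H^*(F_i)$, so $a_i(u)$ is uniquely determined by $\pi^*(u)$.

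Finally, I would promote $a_i$ to a homological class. Since $u\mapsto\pi^*(u)$ is $\mathbf{k}$-linear and the face-element decomposition in $\mathbf{k}[M/T]$ is unique, each assignment $a_i\colon H^2(BT;\mathbf{k})\to\mathbf{k}$ is $\mathbf{k}$-linear. Because $H^*(BT;\mathbf{k})$ is a polynomial ring generated by $H^2(BT;\mathbf{k})$ of finite rank, the universal coefficient theorem provides a canonical isomorphism $H_2(BT;\mathbf{k})\cong\mathrm{Hom}_\mathbf{k}(H^2(BT;\mathbf{k}),\mathbf{k})$. Hence there exists a unique $v_i\in H_2(BT;\mathbf{k})$ with $a_i(u)=\langle v_i,u\rangle$, completing the proof. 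The substantive work has already been done in establishing Theorem \ref{main theorem} and Proposition \ref{HF[x] to HTmrF}; the remaining steps are bookkeeping, and I do not anticipate a genuine obstacle. The mildest subtlety is simply separating the purely topological term $\theta(Q)$, which records the Yoshida data $c\in H^2(Q;\mathbb{Z}^n)$, from the torus-character part $\sum_i\langle v_i,u\rangle\tau_i(M_{\mrF_i})$, which records the characteristic function $\lambda$.
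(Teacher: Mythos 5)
Your proposal is correct and follows essentially the same route as the paper: decompose $(\bigoplus_{F\in\mcF} r_{\mrF})(\pi^*(u))$ into face elements via Theorem \ref{main theorem}, kill all faces of codimension at least $2$ by the degree bound $\deg(\theta(F))\ge 2\,\mathrm{codim}(F)$, identify $\theta(Q)_Q\in H^2_T(M_{\mathring{Q}})\cong H^2(Q)$ and $\theta(F_i)_{F_i}=a_i(u)\tau_i(M_{\mrF_i})$, and convert the linear functionals $a_i$ into classes $v_i\in H_2(BT;\mathbf{k})$ by the pairing. Your appeal to Proposition \ref{HF[x] to HTmrF} to pin down $a_i(u)$ uniquely is the right justification (and makes the argument not depend on uniqueness of the face-element decomposition itself), so there is no gap.
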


\begin{rema}
$\theta(Q)_Q$ in the above proposition reflects the principal $T$-bundle $M_{\mathring{Q}}=\mathring{M}\to \mathring{M}/T$ because of the following commutative diagram.
\end{rema}
\begin{equation*}
\begin{CD}
ET\times M @<{}<<ET\times\mathring{M}@>>>\mathring{M}\\
@V {}VV @VV{}V@VVV\\
ET\times_T M@<{}<<ET\times_T\mathring{M}@>>>\mathring{M}/T\\
@V {}VV @VV{}V\\
BT@=BT
\end{CD}
\end{equation*}
where $ET\times_T\mathring{M}\to\mathring{M}/T$ is a homotopy equivalence.

\vspace{3mm}
\section{On equivariant characteristic classes of $M$}\label{Sect: On equivariant characteristic classes of M}
In this section we express the equivariant Stiefel-Whitney classes and Pontrjagin classes of $M$ in terms of $H^*(M/T)$ and $\tau_i$'s.

\vspace{2mm}
\begin{lemma}\label{equvariant sw class=pull back the sw class of orbit space}
Let $\overline{q}: ET\times_T M_\mrF\to\mrF$ be the second projection in the Borel construction\footnote{In this section the definition of $\overline{q}$ is slightly different from the previous sections}. Then 
\[w^T(M_{\mrF})=\overline{q}^*w(\mrF)\]
where $w^T(M_{\mrF})$ and $w(\mrF)$ denote equivariant total Stiefel-Whitney class of $M_{\mrF}$ and total Stiefel-Whitney class of $\mrF$ respectively.
\end{lemma}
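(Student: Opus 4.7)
The plan is to decompose the tangent bundle $TM_{\mrF}$ as a $T$-equivariant Whitney sum of a vertical piece (tangent to the $T$-orbits) and a horizontal piece (isomorphic to the pullback of $T\mrF$), and then to compute the equivariant Stiefel-Whitney class of each summand.

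First I would observe that $T_F$ acts trivially on all of $M_{\mrF}$: since $M_{\mrF}=q^{-1}(\mrF)$ and $\mrF$ is an open face, the local standard model guarantees that the isotropy group at every point of $M_{\mrF}$ is exactly $T_F$; equivalently $M_{\mrF}\subseteq M_{F_{i_1}}\cap\cdots\cap M_{F_{i_k}}$ and each $M_{F_{i_j}}$ is pointwise fixed by $T_{i_j}$. Consequently $T/T_F$ acts freely on $M_{\mrF}$ with quotient $\mrF$, so $\pi\colon M_{\mrF}\to \mrF$ is a principal $T/T_F$-bundle. Averaging any Riemannian metric over $T$ yields a $T$-invariant metric on $M_{\mrF}$ whose orthogonal decomposition produces a $T$-equivariant splitting
\[
TM_{\mrF}\;\cong\; V\oplus H,
\]
where $V$ is the vertical subbundle (tangent to the $T$-orbits) and $H$ is its orthogonal complement. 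The differential $d\pi\colon TM_{\mrF}\to \pi^*(T\mrF)$ is $T$-equivariant (with the trivial $T$-action on the fibers of $T\mrF$) and has kernel $V$, so it restricts to a $T$-equivariant isomorphism $H\xrightarrow{\sim}\pi^*(T\mrF)$.

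Next I would identify $V$ as a trivial $T$-equivariant bundle. The infinitesimal action gives a $T$-equivariant morphism $M_{\mrF}\times\mathfrak{t}\to TM_{\mrF}$, whose kernel at every point is $\mathfrak{t}_F$ (the Lie algebra of the isotropy group $T_F$, which is constant across $M_{\mrF}$), and whose image is exactly $V$. Therefore
\[
V\;\cong\; M_{\mrF}\times(\mathfrak{t}/\mathfrak{t}_F),
\]
and since $T$ is abelian the induced $T$-action on the fiber $\mathfrak{t}/\mathfrak{t}_F$ (the adjoint representation) is trivial. Passing to the Borel construction,
\[
ET\times_T V\;\cong\;(ET\times_T M_{\mrF})\times(\mathfrak{t}/\mathfrak{t}_F)
\]
is a trivial bundle, so $w^T(V)=1$. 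For the horizontal part, the $T$-equivariant isomorphism $H\cong \pi^*(T\mrF)$ descends in the Borel construction to $ET\times_T H\cong \overline{q}^*(T\mrF)$, because the Borel fibration sends $\pi\colon M_{\mrF}\to \mrF$ to $\overline{q}\colon ET\times_T M_{\mrF}\to \mrF$. Hence $w^T(H)=\overline{q}^*w(\mrF)$.

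The Whitney sum formula then yields
\[
w^T(M_{\mrF})\;=\;w^T(V)\cdot w^T(H)\;=\;\overline{q}^*w(\mrF),
\]
as required. The only point requiring genuine care is the production of a $T$-equivariant splitting $TM_{\mrF}\cong V\oplus H$, which is handled by averaging; everything else reduces to standard facts about adjoint representations of tori and to the functoriality of the Borel construction under the principal $T/T_F$-bundle $M_{\mrF}\to \mrF$.
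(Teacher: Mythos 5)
Your proof is correct and follows essentially the same route as the paper: both decompose $TM_{\mrF}$ equivariantly into the pullback of $T\mrF$ and the vertical bundle along the fibers of the principal $T/T_F$-bundle $M_{\mrF}\to\mrF$, and both trivialize the vertical part equivariantly via the infinitesimal action of $\mathrm{Lie}(T/T_F)$ (the paper's invariant vector fields $\widetilde{v_i}$ are exactly your fundamental vector fields). Your added care about averaging a metric to get the equivariant splitting and noting the triviality of the adjoint action of an abelian $T$ is a fine, slightly more explicit version of the same argument.
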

\begin{proof}
Since $q: M_\mrF\to \mrF$ is a principal $T/T_F$ fiber bundle, we have 
\[
TM_\mrF\cong q^*T\mrF\oplus\nu_f,
\]
where $\nu_f$ is the subvector bundle of $TM_F$ along the fiber direction. The above bundle isomorphism is $T$-equivariant. Indeed $T$ acts on the fiber part of $q^*T\mrF$ trivially and acts on $\nu_f$ though the projection $T\to T/T_F$. Note that $\nu_f$ is $T$-equivariant isomorphism to a trivial bundle. Indeed we can assign $T$-invariant vector fields along the fiber direction of $M_\mrF$ as follows. Let $v_1,\cdots, v_l$ be a basis of $\text{Lie}(T/T_F)$, where $\text{Lie}(T/T_F)$ means the Lie algebra of $T/T_F$. For each $p\in M_\mrF$, we define 
\[\widetilde{v_i}(p):=\frac{d(\text{exp}(tv_i)\cdot p)}{dt}\Big|_{t=0}.\]
Then $\widetilde{v_i}$'s are $T$-invariant vector fields along the fiber direction. This implies that 
\[
w^T(\nu_f)=1.
\] 
Since $T$ acts on fiber direction of $q^*T\mrF$ trivially, 
we have 
\[
w^T(q^*T\mrF)=\overline{q}^*w(\mrF).\]
Therefore 
\[w^T(M_\mrF)=w^T(q^*T\mrF)\cdot w^T(\nu_f)=\overline{q}^*w(\mrF).\]
\end{proof}

\vspace{2mm}
\begin{corollary}\label{local SW classes}
Let $F$ be a codimension $k$ face of $M/T$ and a connected component of $F_{i_1}\cap\cdots\cap F_{i_k}$, where $F_{i_j}$'s are facets of $M/T$. Then  
\[
w^T(X_{\mrF})=(\pi_{\mrF}^*\circ\overline{q}^*)\Big(w(\mrF)\Big)\cdot (1+\tau_{i_1}|_{X_\mrF})\cdots(1+\tau_{i_k}|_{X_\mrF}),
\]
where $\pi_{\mrF}^*$ is induced by the map $\pi_\mrF: X_{\mrF}\to M_{\mrF}$ and $\overline{q}^*$ is induced by $\overline{q}: ET\times_T M_\mrF\to\mrF$ the second projection in the Borel construction.
\end{corollary}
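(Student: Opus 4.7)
The plan is to reduce the computation on $X_{\mrF}$ to the known data on $M_{\mrF}$ using the fact that the projection $\pi_{\mrF}:X_{\mrF}\to M_{\mrF}$ of the tubular neighbourhood is a $T$-equivariant homotopy equivalence (the zero section $M_{\mrF}\hookrightarrow X_{\mrF}$ is its inverse up to homotopy). First I would decompose the tangent bundle along the zero section: as $T$-equivariant vector bundles one has the canonical splitting
\[
TX_{\mrF}\big|_{M_{\mrF}}\;\cong\;TM_{\mrF}\oplus\nu_{\mrF},
\]
where $\nu_{\mrF}=\nu_{F}|_{M_{\mrF}}$ is the normal bundle of $M_{\mrF}$ in $M$. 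Since $\pi_{\mrF}^{*}$ is an isomorphism on (equivariant) cohomology, pulling back by $\pi_{\mrF}$ gives
\[
w^{T}(X_{\mrF})\;=\;\pi_{\mrF}^{*}\bigl(w^{T}(M_{\mrF})\bigr)\cdot\pi_{\mrF}^{*}\bigl(w^{T}(\nu_{\mrF})\bigr),
\]
by Whitney sum.

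Next I would identify the two factors. The first is handled directly by Lemma \ref{equvariant sw class=pull back the sw class of orbit space}, which yields $w^{T}(M_{\mrF})=\overline{q}^{*}w(\mrF)$, giving the factor $(\pi_{\mrF}^{*}\circ\overline{q}^{*})(w(\mrF))$ exactly as in the statement. For the second factor I invoke the $T$-equivariant splitting (\ref{vector bundle decomposition}),
\[
\nu_{\mrF}\;\cong\;\bigoplus_{j=1}^{k}\bigl(\nu_{F_{i_{j}}}\big|_{M_{\mrF}}\bigr),
\]
so that $w^{T}(\nu_{\mrF})=\prod_{j=1}^{k}w^{T}\bigl(\nu_{F_{i_{j}}}|_{M_{\mrF}}\bigr)$. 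Each $\nu_{F_{i_{j}}}$ is a complex line bundle (hence a real rank-$2$ oriented bundle), so its total equivariant Stiefel--Whitney class is $1+w_{2}^{T}$, and $w_{2}^{T}$ of a complex line bundle is the mod-$2$ reduction of the equivariant Euler class. By construction $\tau_{i_{j}}$ is exactly this equivariant Euler class on the ambient $M$, so its restriction to $M_{\mrF}$ is $w_{2}^{T}(\nu_{F_{i_{j}}}|_{M_{\mrF}})$, giving
\[
w^{T}(\nu_{\mrF})\;=\;\prod_{j=1}^{k}\bigl(1+\tau_{i_{j}}|_{M_{\mrF}}\bigr).
\]

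Finally I would combine the two factors and push the identity forward via $\pi_{\mrF}^{*}$, using that $\pi_{\mrF}^{*}(\tau_{i_{j}}|_{M_{\mrF}})=\tau_{i_{j}}|_{X_{\mrF}}$ (both equal the restriction of $\tau_{i_{j}}\in H^{*}_{T}(M)$ along $X_{\mrF}\subseteq M$, using the commutativity of restriction with the homotopy equivalence $\pi_{\mrF}$). This yields the claimed formula. The only delicate point I expect is the identification $w_{2}^{T}(\nu_{F_{i_{j}}}|_{M_{\mrF}})=\tau_{i_{j}}|_{M_{\mrF}}$: this requires being careful that the class $\tau_{i_{j}}$ defined globally on $M$ via the equivariant Thom isomorphism (end of Section \ref{basic definition}) really agrees on $M_{\mrF}$ with the equivariant Euler class of the summand $\nu_{F_{i_{j}}}|_{M_{\mrF}}$, which follows by naturality of the Thom class under the inclusion $M_{\mrF}\hookrightarrow M_{F_{i_{j}}}$ and the splitting (\ref{vector bundle decomposition}), interpreted over $\mathbf{k}=\mathbb{Z}/2$.
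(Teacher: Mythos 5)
Your proposal is correct and follows essentially the same route as the paper: split $TX_{\mrF}$ (equivalently its restriction to the zero section) as $TM_{\mrF}\oplus\nu_{\mrF}$, apply Lemma \ref{equvariant sw class=pull back the sw class of orbit space} to the tangential factor, use the splitting (\ref{vector bundle decomposition}) and the identification of $\tau_{i_j}$ with the (mod $2$) equivariant Euler class for the normal factor, and transport everything through the homotopy equivalence $\pi_{\mrF}$. Your explicit remark that $w_2^T(\nu_{F_{i_j}}|_{M_{\mrF}})=\tau_{i_j}|_{M_{\mrF}}$ over $\mathbf{k}=\mathbb{Z}/2$ just spells out a step the paper leaves implicit.
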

\begin{proof}
Since we can identify the $T$-equivariant normal bundle $\nu_F$ with a $T$-invariant tubular neighbourhood $X_\mrF$ of $M_\mrF$ in $M$, we have the $T$-equivariant decomposition $TX_\mrF\cong\pi^*_{\mrF}(TM_\mrF)\oplus \pi^*_{\mrF}(\nu_\mrF)$. Hence we have 
\[
w^T(X_\mrF)=\pi_\mrF^*\Big(w^T(M_\mrF)\Big)\cdot\pi_\mrF^*\Big(w^T(\nu_\mrF)\Big).
\]
Since for each $p\in M_F$, $M_{F_{i_1}},\dots, M_{F_{i_k}}$ intersects at $p$ transversely, we have 
\red{$
\nu_F\cong\oplus_{j=1}^k(\nu_{F_{i_j}}|_{M_F})$
as $T$-invariant vector bundles. Therefore 
$
\nu_\mrF\cong\oplus_{j=1}^k(\nu_{F_{i_j}}|_{M_\mrF}),$ }which implies that 
\[
w^T(\nu_\mrF)=(1+\tau_{i_1}|_{M_\mrF})\cdots(1+\tau_{i_k}|_{M_\mrF}).
\]
Since $\pi_\mrF: X_\mrF\to M_\mrF$ is a $T$-equivariant homotopy equivalence, we have 
\[\pi_\mrF^*\Big(w^T(\nu_\mrF)\Big)=(1+\tau_{i_1}|_{X_\mrF})\cdots(1+\tau_{i_k}|_{X_\mrF}).\]
By Lemma \ref{equvariant sw class=pull back the sw class of orbit space}, we have 
\[
w^T(M_\mrF)=\overline{q}^*w(\mrF).
\]
Hence the equality\[
w^T(X_{\mrF})=(\pi_{\mrF}^*\circ\overline{q}^*)\Big(w(\mrF)\Big)\cdot (1+\tau_{i_1}|_{X_\mrF})\cdots(1+\tau_{i_k}|_{X_\mrF})\]
follows from the above calculations.
\end{proof}

\vspace{2mm}
Let $\mathring{M}$ be the free part of the action on $M$. Namely $\mathring{M}=q^{-1}(\mathring{Q}),$ where $\mathring{Q}$ is the relative interior part of $Q=M/T$ and $\mathring{Q}$ is homotopy equivalent to $Q$.
Then we can express the equivariant Stiefel-Whitney classes of $M$ in terms of $w(\mathring{M}/T)$ and $\tau_i$'s, where $\tau_i$ is given at the end of Section \ref{basic on torus manifolds with locally standard action}.
\begin{theo}\label{theorem for sw classes}
Let $M$ be an open torus manifold with locally standard action and $\{F_1,\cdots, F_m\}$ be the set of facets of $M/T$. Then 
\[
w^T(M)=h_M^*\Big(w(\mathring{M}/T)\Big)\cdot(1+\tau_1)\cdot(1+\tau_2)\cdots(1+\tau_m) ,
\]
where $h_M^*=\overline{q}^* \circ(\iota^*)^{-1}$, $\overline{q}^*$ is induced by $\overline{q}: ET\times_TM\to M/T$ the second projection in the Borel construction and $\iota^*$ is induced by the inclusion map
$\iota: \mathring{M}/T\to M/T$.
\end{theo}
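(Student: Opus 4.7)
The plan is to verify the identity after applying the restriction map $\bigoplus_{F\in\mcF} r_{\mrF}$, which is injective by Proposition \ref{injective observation}. Thus it suffices to prove that for every face $F\in\mcF$ with $\Psi(F)=\{i_1,\ldots,i_k\}$, the two sides of the claimed equality agree in $H^*_T(M_{\mrF})$.

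For the left-hand side, the splittings $TM|_{M_{\mrF}}=TM_{\mrF}\oplus\nu_{\mrF}$ and $\nu_{\mrF}\cong\bigoplus_{j=1}^k\nu_{F_{i_j}}|_{M_{\mrF}}$ (see \eqref{vector bundle decomposition}), combined with Lemma \ref{equvariant sw class=pull back the sw class of orbit space}, yield
\[
r_{\mrF}\bigl(w^T(M)\bigr)=\overline{q}^*\bigl(w(\mrF)\bigr)\cdot\prod_{j=1}^k\bigl(1+\tau_{i_j}|_{M_{\mrF}}\bigr),
\]
which is essentially the local computation of Corollary \ref{local SW classes} transported through the homotopy equivalence $\pi_{\mrF}\colon X_{\mrF}\to M_{\mrF}$ and the Whitney sum formula.

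For the right-hand side I first simplify $r_{\mrF}\bigl(\prod_{i=1}^m(1+\tau_i)\bigr)$. Whenever $i\notin\Psi(F)$ we have $\mrF\cap F_i=\emptyset$, hence $M_{\mrF}\cap M_{F_i}=\emptyset$; since $\tau_i$ is the image of an equivariant Thom class lying in the relative group of the pair $(M,M\setminus M_{F_i})$, exactness of its long exact sequence forces $\tau_i|_{M\setminus M_{F_i}}=0$, and in particular $\tau_i|_{M_{\mrF}}=0$. The product therefore collapses to $\prod_{j=1}^k(1+\tau_{i_j}|_{M_{\mrF}})$. I then must show $r_{\mrF}\bigl(h_M^*(w(\mathring{M}/T))\bigr)=\overline{q}^*(w(\mrF))$. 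By Corollary \ref{local model for faces}, a tubular neighbourhood of $F$ in $Q$ has the form $F\times\RR_{\ge 0}^k$, and its relative interior $\mrF\times\RR_{>0}^k$ is an open subset of $\mathring{M}/T$. Naturality of the total Stiefel-Whitney class under the open inclusion $\mrF\times\RR_{>0}^k\hookrightarrow\mathring{M}/T$, combined with the triviality of the tangent bundle of $\RR_{>0}^k$, gives $w(\mathring{M}/T)|_{\mrF\times\RR_{>0}^k}=w(\mrF)$. Finally, the constant section $\mrF\hookrightarrow\mrF\times\RR_{>0}^k\subset\mathring{M}/T$ is homotopic inside $Q$ (via the straight-line homotopy inside the collar) to the standard inclusion $\mrF\hookrightarrow F\hookrightarrow Q$, and since $\iota^*\colon H^*(Q)\to H^*(\mathring{M}/T)$ is an isomorphism, this identifies $(\iota^*)^{-1}\bigl(w(\mathring{M}/T)\bigr)|_{\mrF}$ with $w(\mrF)$. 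Passing through the commutative square relating $\overline{q}$ and $\overline{q}_F\colon ET\times_T M_{\mrF}\to\mrF$, this delivers $r_{\mrF}\bigl(h_M^*(w(\mathring{M}/T))\bigr)=\overline{q}^*(w(\mrF))$, as required.

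The main obstacle is the last identification, i.e.\ the naturality argument $(\iota^*)^{-1}\bigl(w(\mathring{M}/T)\bigr)|_{\mrF}=w(\mrF)$: it requires carefully checking that the two maps $\mrF\to Q$, one given by the face inclusion and the other by the collar section into $\mathring{M}/T$ followed by $\iota$, are homotopic, and that the tangent bundle identifications match under this homotopy. Once this is in place, both sides restrict on $M_{\mrF}$ to $\overline{q}^*(w(\mrF))\cdot\prod_{j=1}^k(1+\tau_{i_j}|_{M_{\mrF}})$, so Proposition \ref{injective observation} finishes the proof of Theorem \ref{theorem for sw classes}.
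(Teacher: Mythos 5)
Your proposal is correct, but it takes a genuinely different route from the paper. The paper proves Theorem \ref{theorem for sw classes} by induction on $^\sharp\mcF$: it removes a minimal face $F_\bullet$, uses the split Mayer--Vietoris sequence (\ref{split into short exact sequence}) to get injectivity of $H^*_T(M)\to H^*_T(M(1))\oplus H^*_T(X_{\mrF_\bullet})$, applies the induction hypothesis on $M(1)$, and matches the restriction to $X_{\mrF_\bullet}$ against Corollary \ref{local SW classes} via a diagram of Borel constructions. You instead restrict once and for all along $\bigoplus_{F\in\mcF}r_{\mrF}$, which is injective by Proposition \ref{injective observation}, and verify the identity on each $M_{\mrF}$ directly; this avoids any new induction (it is hidden in Proposition \ref{injective observation}, already proved) and has the merit of making explicit two points the paper uses only implicitly: the vanishing $\tau_i|_{M_{\mrF}}=0$ for $i\notin\Psi(F)$ (which the paper also needs when collapsing $\prod_i(1+\tau_i)$ over $X_{\mrF_\bullet}$), and the identification of the restriction of $h_M^*\big(w(\mathring{M}/T)\big)$ with $\overline{q}^*\big(w(\mrF)\big)$, which in the paper is the unstated step equating $h^*_{X_{F_\bullet}}\big(w(\mathring{X}_{F_\bullet}/T)\big)$ with $(\pi^*_{\mrF_\bullet}\circ\overline{q}^*)\big(w(F_\bullet)\big)$ before invoking Corollary \ref{local SW classes}. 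For the obstacle you flag, note that you need not rely on the product identification of Corollary \ref{local model for faces} being smooth: since the collar homotopy gives $j_F\simeq\iota\circ s$ for the face inclusion $j_F\colon\mrF\to Q$ and the section $s\colon\mrF\to\mathring{Q}$, homotopy invariance of pullbacks yields $s^*(T\mathring{Q})\cong j_F^*(TQ)\cong T\mrF\oplus\nu(\mrF,Q)$, and the normal bundle of the face is trivialized by the inward-pointing normal directions of the $k$ facets through $F$, so $s^*w(\mathring{Q})=w(\mrF)$ without appealing to topological invariance of Stiefel--Whitney classes. With that step secured, both sides restrict on every $M_{\mrF}$ to $\overline{q}^*\big(w(\mrF)\big)\cdot\prod_{j=1}^k\big(1+\tau_{i_j}|_{M_{\mrF}}\big)$ exactly as you say, and your argument is complete; the same scheme also reproves the Pontrjagin class statement verbatim.
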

\begin{proof}
We prove this theorem by induction on $^\sharp\mathcal{F}.$ 

When $^\sharp\mathcal{F}=1$, $M=\mathring{M}$. Since $\mathring{M}$ is a principal $T$-bundle over $\mathring{M}/T$, we have 
\[
w^T(M)=w^T(\mathring{M})=\overline{q}^*w(\mathring{Q})=h_M^*\Big(w(\mathring{M}/T)\Big)
.\]

Assume that $^\sharp\mcF\ge 2$. Let $F_{\bullet}$ be a minimal face of $Q=M/T$ and $M(1):=M\setminus M_{\mrF_{\bullet}}$. Then by Lemma \ref{sharp F(1)=sharp F-1}, we have $^\sharp\mcF(1)\le^\sharp\mcF-1$, where $
\mcF(1)~\text{is the set of faces of $M(1)/T.$}$ Set \[\eta=h_M^*\Big(w(\mathring{M}/T)\Big)\cdot(1+\tau_1)\cdot(1+\tau_2)\cdots(1+\tau_m).\] To show that  $\eta=w^T(M)$, it is enough to prove that 
\begin{equation}\label{restriction of eta on M(1) and XFbullet}
\eta|_{M(1)}=w^T(M(1))\quad\text{and}\quad\eta|_{X_{F_\bullet}}=w^T(X_{F_\bullet})
\end{equation} 
since 
\[H^*_T(M)\to H_T^*(M(1))\bigoplus H^*_T(X_{F_\bullet})\]
is injective by short exact sequence (\ref{split into short exact sequence}). Since the number of faces of $M(1)/T$ is less then $M/T$, by induction hypothesis we have 
\[
w^T(M(1))=h_{M(1)}^*\bigg(w\Big(\mathring{M(1)}/T\Big)\bigg)\cdot\Big(1+\tau_1|_{M(1)}\Big)\cdot\Big(1+\tau_2|_{M(1)}\Big)\cdots\Big(1+\tau_m|_{M(1)}\Big) 
\]
where $h_{M(1)}^*$ is defined similarly to $h_M^*$. On the other hand,
\[
\eta|_{M(1)}=h_M^*\Big(w(\mathring{M}/T)\Big)\Big|_{M(1)_T}\cdot(1+\tau_1|_{M(1)})\cdots(1+\tau_m|_{M(1)}),
\]
where $M(1)_T$ is the Borel construction of $M(1)$. By the following commutative diagram
\begin{equation*}
	\begin{CD}
	ET\times_T\mathring{M}@>{}>>	ET\times_T M(1) @>{}>> ET\times_T M\\
	@VV{}V	@VV{}V @VV{}V\\
	\mathring{M}/T@>{}>>M(1)/T@>{}>> M/T,
	\end{CD}
\end{equation*}
we have 
\[
h_M^*\Big(w(\mathring{M}/T)\Big)\Big|_{M(1)_T}=h_{M(1)}^*\Big(w(\mathring{M}/T)\Big)=h_{M(1)}^*\bigg(w\Big(\mathring{M(1)}/T\Big)\bigg)\]
where the second equality follows from $\mathring{M(1)}=\mathring{M}$. Therefore we have 
\[
\eta|_{M(1)}=w^T(M(1)).
\]

Next we prove $\eta|_{X_{F_\bullet}}=w^T(X_{F_\bullet}).$ Set $F_\bullet$ a connected component of $F_{i_1}\cap\cdots\cap F_{i_k}$, where $F_{i_j}$'s are facets of $M/T$. Then 
\[
\begin{split}
\eta|_{X_{F_\bullet}}&=h_M^*\Big(w(\mathring{M}/T)\Big)\Big|_{X_{F_\bullet}}\cdot(1+\tau_1|_{X_{F_\bullet}})\cdots(1+\tau_m|_{X_{F_\bullet}})\\
&=h_M^*\Big(w(\mathring{M}/T)\Big)\Big|_{X_{F_\bullet}}\cdot(1+\tau_{i_1}|_{X_{F_\bullet}})\cdots(1+\tau_{i_k}|_{X_{F_\bullet}}).
\end{split}
\]
By the following commutative diagram  

\begin{center}
	\begin{tikzcd}
		ET\times_{T}\mathring{X}_{F_\bullet} \arrow[ddd] \arrow[dr] \arrow[rrr]  &  & & ET\times_{T}{X_{F_\bullet}} \arrow[dl] \arrow[ddd]\\
		& \mathring{X}_{F_\bullet}/T \arrow[d] \arrow[r] & {X_{F_\bullet}}/T  \arrow[d] & \\
		& \mathring{M}/T \arrow[r] & M/T  & \\
		ET\times_{T}\mathring{M}  \arrow[ur]\arrow[rrr]& & & ET\times_{T}M \arrow[ul],
	\end{tikzcd}
\end{center}

\noindent we have 
\[
h_M^*\Big(w(\mathring{M}/T)\Big)\Big|_{X_{F_\bullet}}=h_{X_{F_\bullet}}^*\Big(w(\mathring{M}/T)\Big|_{\mathring{X}_{F_\bullet}/T}\Big)=h_{X_{F_\bullet}}^*\Big(w(\mathring{X}_{F_\bullet}/T)\Big),
\]
where $\mathring{X}_{F_\bullet}$ is the free part of $T$-action on $X_{F_\bullet}$.
Therefore we obtain  
\[
\eta|_{X_{F_\bullet}}=w^T(X_{F_\bullet}).
\]
The above argument show that (\ref{restriction of eta on M(1) and XFbullet}) holds, which means $\eta=w^T(M)$.

\end{proof}

Let $p^T(M)$ and  be the equivariant total Pontrjagin \red{class} of $M$ and $p(\mathring{M}/T)$ be the total Pontrjagin \red{class} of $\mathring{M}/T$. With the same arguments we can obtain the formula for equivariant Pontrjagin classes of $M$ as follows. 
\begin{theo}Let $M$ be an open torus manifold with locally standard action and $\{F_1,\cdots, F_m\}$ be the set of facets of $M/T$. Then 
\[
p^T(M)=h_M^*\Big(p(\mathring{M}/T)\Big)\cdot(1+\tau_1^2)\cdot(1+\tau_2^2)\cdots(1+\tau_m^2) ,
\]
where $h_M^*=\overline{q}^* \circ(\iota^*)^{-1}$, $\overline{q}^*$ is induced by $\overline{q}: ET\times_TM\to M/T$ the second projection in the Borel construction and $\iota^*$ is induced by the inclusion map $\iota: \mathring{M}/T\to M/T$.
\end{theo}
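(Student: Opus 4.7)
The plan is to imitate the proof of Theorem~\ref{theorem for sw classes} for equivariant Stiefel--Whitney classes, with only the multiplicative factor contributed by each complex line summand changed. I would organise the argument into three preparatory steps followed by the induction.

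First, I would establish the Pontrjagin analogue of Lemma~\ref{equvariant sw class=pull back the sw class of orbit space}, namely $p^T(M_{\mrF}) = \overline{q}^{\,*} p(\mrF)$. The argument is identical: the map $q: M_{\mrF} \to \mrF$ is a principal $T/T_F$-bundle, so $TM_{\mrF} \cong q^* T\mrF \oplus \nu_f$ as $T$-equivariant real bundles, with $\nu_f$ trivialised by the $T$-invariant vector fields coming from a basis of $\mathrm{Lie}(T/T_F)$. Hence $p^T(\nu_f) = 1$ and $p^T(q^* T\mrF) = \overline{q}^{\,*} p(\mrF)$ by multiplicativity.

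Second, I would prove the Pontrjagin analogue of Corollary~\ref{local SW classes}:
\[
p^T(X_{\mrF}) \;=\; (\pi_{\mrF}^* \circ \overline{q}^{\,*})\bigl(p(\mrF)\bigr)\cdot\prod_{j=1}^{k}\bigl(1 + \tau_{i_j}^2|_{X_{\mrF}}\bigr).
\]
The only new ingredient compared with the Stiefel--Whitney case is the classical identity $p(L_{\RR}) = 1 + c_1(L)^2$ for the underlying real bundle of a complex line bundle $L$. Combined with the $T$-equivariant splitting $\nu_F \cong \bigoplus_{j=1}^{k} (\nu_{F_{i_j}}|_{M_F})$ from \eqref{vector bundle decomposition} and the fact that the $T$-equivariant first Chern class of $\nu_{F_{i_j}}$ is $\tau_{i_j}$, this gives $p^T(\nu_{\mrF}) = \prod_{j=1}^{k}(1 + \tau_{i_j}^2|_{M_{\mrF}})$. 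The rest follows exactly as in Corollary~\ref{local SW classes}.

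Third, I would perform induction on $^{\sharp}\mcF$ along the lines of the proof of Theorem~\ref{theorem for sw classes}. The base case $^{\sharp}\mcF = 1$ reduces to $M = \mathring{M}$, which is a principal $T$-bundle over $\mathring{M}/T$, so the first preliminary step yields $p^T(M) = \overline{q}^{\,*} p(\mathring{M}/T) = h_M^*\bigl(p(\mathring{M}/T)\bigr)$. For the inductive step, pick a minimal face $F_{\bullet}$, set $M(1) = M \setminus M_{\mrF_{\bullet}}$, and use the injectivity of $H^*_T(M) \to H^*_T(M(1)) \oplus H^*_T(X_{F_{\bullet}})$ coming from the short exact sequence~\eqref{split into short exact sequence}. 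I would define
\[
\eta \;:=\; h_M^*\bigl(p(\mathring{M}/T)\bigr)\cdot\prod_{i=1}^{m}(1 + \tau_i^2)
\]
and verify the two restriction identities $\eta|_{M(1)} = p^T(M(1))$ and $\eta|_{X_{F_{\bullet}}} = p^T(X_{F_{\bullet}})$. The first follows from the inductive hypothesis together with $\mathring{M(1)} = \mathring{M}$ and the naturality diagram for $h^*$ already used in the Stiefel--Whitney proof. The second follows from the second preparatory step, the analogous naturality diagram relating $h_M^*$ with $h_{X_{F_{\bullet}}}^*$, and the vanishing $\tau_i|_{X_{F_{\bullet}}} = 0$ whenever $F_{\bullet} \not\subseteq F_i$, which reduces the full product $\prod_{i=1}^{m}(1 + \tau_i^2)|_{X_{F_{\bullet}}}$ to the partial product $\prod_{j=1}^{k}(1 + \tau_{i_j}^2)|_{X_{F_{\bullet}}}$ appearing in the second preparatory step.

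There is no genuine obstacle: the theorem is a routine parallel of Theorem~\ref{theorem for sw classes}. The only substantive replacement is swapping $(1 + \tau_i)$ by $(1 + \tau_i^2)$, justified by $p(L_{\RR}) = 1 + c_1(L)^2$ for complex line bundles. The only computation worth double-checking is the vanishing of $\tau_i|_{X_{F_{\bullet}}}$ for facets $F_i$ not containing $F_{\bullet}$, which lets the partial-product identity go through.
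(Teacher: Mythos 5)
Your proposal is correct and follows essentially the same route as the paper, which simply states that the Pontrjagin formula is obtained ``with the same arguments'' as Theorem \ref{theorem for sw classes}; your three preparatory steps (the analogues of Lemma \ref{equvariant sw class=pull back the sw class of orbit space} and Corollary \ref{local SW classes}, with $p(L_{\RR})=1+c_1(L)^2$ replacing the line-bundle contribution) together with the induction on $^{\sharp}\mcF$ via the short exact sequence (\ref{split into short exact sequence}) are exactly the intended argument, just spelled out in more detail than the paper gives.
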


\end{document}